\numberwithin{equation}{section}
\renewcommand{\cW}{\boldsymbol{\mathcal W}}
\newtheorem{theorem}{Theorem}[section]
\newtheorem{proposition}[theorem]{Proposition}
\newtheorem{lemma}[theorem]{Lemma}
\newtheorem{definition}[theorem]{Definition}
\theoremstyle{definition}
\newtheorem{remark}[theorem]{Remark}
\title{\bf Bifurcation of 
gravity-capillary Stokes waves \\
with constant vorticity}
\author{T. Barbieri, M. Berti, A. Maspero, M. Mazzucchelli}
\date{} 
\begin{document}
\maketitle

\noindent
{\bf Abstract.}
We  consider the gravity-capillary water waves equations of a 2D 
fluid with constant vorticity.  
By employing
variational methods we prove 
the bifurcation of periodic traveling water waves 
--which are steady in a moving frame--
for {\it all} the values of 
gravity,  surface tension, constant vorticity, 
depth  and wavelenght, 
extending   
previous results valid for restricted values of the  parameters.  We parametrize the bifurcating 
Stokes waves either 
with their 
speed or  their  momentum.


\smallskip
\noindent 
{{\it Key words}: Bifurcation of Stokes waves,  variational methods, gravity-capillary water waves, critical point theory.} 
\\[1mm]
\noindent 
{{\it MSC 2020}: 76B15, 35C07, 37K50, 58E07.} 

\section{Introduction and main results}

A very classical   fluid mechanics problem 
regards 
the search for traveling 
 surface waves. In this paper 
we consider the Euler equations  for a 2-dimensional 
incompressible and inviscid fluid with constant vorticity $\gamma $, under the action of 
gravity $ g > 0 $ and surface tension $ \kappa \geq 0  $ 
at the free surface.
The fluid 
occupies the region
\begin{equation}
\label{domain}
\cD_{\eta, \tth} := \big\{ (x,y)\in (\lambda \T) \times \R \ : \ -\tth <  y<\eta(t,x) \big\} \, , 
\quad \T := \T_x :=\R/ (2\pi 
 \Z) \, ,
\end{equation} 
with  a, possibly infinite, 
depth $\tth > 0 $  and 
 space periodic boundary conditions with wavelength $ 2 \pi \lambda >  0 $.
 
The goal  is to show that variational methods, based on the Hamiltonian formulation of the 
water waves equations
\cite{Z,CS,CIP,Wh},  
allow to prove 
the bifurcation of periodic traveling water waves -called Stokes waves- 
for {\it all} the values of 
gravity $ g > 0$,  surface tension $ \kappa \geq 0 $, constant vorticity $ \gamma \in \R $, 
depth $ \tth\in (0,+\infty] $ and wavelenght $ 2 \pi \lambda > 0 $
(clearly not all the physical parameters   $ g, \tth, \kappa, \gamma, \lambda $
   are  independent), 
   see Theorem \ref{simpl0}. These solutions look steady in a reference frame moving with the speed of the wave.
Previous results as \cite{W,M},  based on the use of the 
Crandall-Rabinowitz bifurcation theorem from the simple eigenvalue, restrict the range of allowed parameters. 
We shall prove
the existence of 
non-trivial Stokes waves 
parametrized by the speed, see  
Theorem \ref{simpl},  or the momentum, see Theorem \ref{simplCN}. 


\smallskip

The literature concerning  Stokes waves is enormous. We 
refer to \cite{const_book,HHSTWWW} for extended presentations. Here we only mention that,
after the pioneering work of Stokes  \cite{stokes}, the first rigorous construction 
of small amplitude space periodic traveling waves is due to  
Nekrasov \cite{Nek}, Levi-Civita \cite{LC} and Struik \cite{Struik} for  irrotational $2$D flows  under the action of  gravity.  
Later  Zeidler \cite{Zei} considered the effect of capillarity,  see also Jones-Toland \cite{JT},  
and Dubreil-Jacotin \cite{dubreil}, Goyon \cite{goyon} of  vorticity.
 More recent results  for capillary-gravity waves with  vorticity 
 have been given in Wahl\'en \cite{Wh0,W} and  
  Martin  \cite{M}. 
All these works deal with 2D water waves, and  can ultimately 
be  deduced by the Crandall-Rabinowitz 
bifurcation theorem from a simple eigenvalue. 

These local bifurcation results have been extended to  global branches 
of steady waves, as started in the celebrated works of 
Keady-Norbury \cite{KN}, Toland \cite{To}, Amick-Fraenkel-Toland \cite{AFT},  McLeod \cite{ML},
Plotinkov \cite{P1982}
for irrotational flows and  Constantin-Strauss \cite{CSt},
Constantin-Strauss-Varvaruca \cite{CSV}, 
for fluids with vorticity. See also the recent works by Wahl\'en-Weber \cite{WW24} and Kozlov-Lokharu \cite{KK} for general vorticity. 

For  three dimensional irrotational fluids, bifurcation of small amplitude
bi-periodic traveling 
waves   has been proved  in Reeder-Shinbrot \cite{RS}, Craig-Nicholls \cite{CN,CN2} 
for gravity-capillary  waves by  variational methods
and  by Iooss-Plotnikov \cite{IP-Mem-2009,IP2} for gravity waves (this is a small divisor problem). 
We also quote 
the  results \cite{LSW, GNPW} for  doubly periodic gravity-capillary Beltrami flows.

We finally mention that in the last years also 
the existence of quasi-periodic  traveling Stokes waves --which are the nonlinear superposition of 
 Stokes waves moving 
with rationally independent speeds--
 has been proved in \cite{BFM,FG,BFM2}  by means of KAM methods. 
 
\smallskip

The results of the present paper,
Theorems \ref{simpl}, \ref{simplCN}, are not covered 
by 
Craig-Nicholls \cite{CN} as we include constant vorticity effects for $ 2D$ fluids 
and we parametrize the Stokes waves 
also with their speed. This requires a different critical point theory.
With respect to Martin \cite{M} we cover all the possible  
bifurcation speeds, also the resonant ones, that we characterize in Proposition \ref{KER24}.
Let us now present rigorously the results and the techniques.
\\[1mm]
{\bf The water waves equations.}
In the sequel, 
with no loss of generality,  we  set $ \lambda = 1 $. 
The unknowns of the problem are the free surface  $ y = \eta (t, x)$
of the time dependent domain $\cD_{\eta,\tth} $ in \eqref{domain} and the 
divergence free  velocity field $\vect{u(t,x,y)}{v(t,x,y)}$.
If the  fluid has constant vorticity 
$ v_x - u_y = \gamma $, 
the velocity field   is the sum of the Couette flow $\vect{-\gamma y}{0}$,  which carries
all the  vorticity $ \gamma $ of the fluid,  and an irrotational field $ \nabla_{x,y} \Phi (t, x,y) $. 
Given $ \psi (t,x) := \Phi (t,x, \eta(t,x)) $ 
one recovers $ \Phi $ by solving the elliptic problem
$$
\Delta \Phi = 0  \ \mbox{ in } \cD_{\eta, \tth} \, , \quad
\Phi = \psi \  \mbox{ at } y = \eta(t,x) \, , \quad
\Phi_y \to  0  \  \mbox{ as } y \to  - \tth \, .
$$
Imposing  
that the fluid particles at the free surface remain on it along the evolution
(kinematic boundary condition), and that
the pressure of the fluid 
 is  constant  at the free surface (dynamic boundary condition),  the 
time evolution of the fluid is determined by the 
system of equations 
\begin{equation}
\label{WWZakharov}
\begin{cases}
\eta_t = G(\eta)\psi + \gamma \eta \eta_x \\
\displaystyle{\psi_t = - g\eta  - \frac{\psi_x^2}{2} + 
	\frac{(  \eta_x \psi_x + G(\eta)\psi)^2}{2(1+\eta_x^2)}  + \kappa\partial_x \Big(\frac{\eta_x}{\sqrt{1+\eta_x^2}}\Big)+\gamma \eta \psi_x  + \gamma \partial_x^{-1} G(\eta) \psi} \, ,
\end{cases}
\end{equation}
where $G(\eta)$ is the  Dirichlet-Neumann operator  
$$
G(\eta)\psi := G(\eta,\tth)\psi :=
 (- \Phi_x \eta_x + \Phi_y)\vert_{y = \eta(x)} \, . 
$$
In \eqref{WWZakharov} 
 $ (\pa_x^{-1} f) (x) $ 
 denotes the unique 
primitive with  zero average  of a $ 2 \pi  $-periodic zero average function $ f(x) $.  
It turns out that $ G(\eta)\psi  $ has zero average. As consequence 
the average of $ \eta (x) $,   
$ \langle \eta \rangle := \eta_0 := 
\frac{1}{2  \pi } \int_\mathbb{ T} \eta (x) \, d x $ 
  is a prime integral of
  \eqref{WWZakharov}. 
  Note also that, since  
  $ G(\eta) [1] = 0 $ vanishes on the constants,  
the vector field in the right hand side of \eqref{WWZakharov} does not depend on $ \psi_0 = \la \psi \ra $.
 \\[1mm]
{\bf Hamiltonian structure.}
As observed in the irrotational case  by Zakharov \cite{Z}, Craig-Sulem \cite{CS}, and  in presence of constant vorticity by  Constantin-Ivanov-Prodanov \cite{CIP} and 
Wahl\'en \cite{Wh}, 
the water waves equations \eqref{WWZakharov} are the Hamiltonian system 
with a non-canonical structure
\begin{equation}\label{WWZakharovHam}
	\eta_t = \nabla_{\psi} H(\eta,\psi)\,, \quad \psi_t = (-\nabla_{\eta} + \gamma \pa_x^{-1}\nabla_{\psi})H(\eta,\psi)\,,
\end{equation}
where $\nabla$ denotes the $L^2$-gradient, with 
Hamiltonian
\begin{equation}\label{Hamiltonian}
	H(\eta,\psi) 
            = \int_{ \T} 
            \frac12 \Big( \psi \,G(\eta) \psi + g \eta^2 \Big) 
	 +\kappa \big(\sqrt{1+\eta_x^2}-1\big)
  + \frac{\gamma}{2} 
	\big( -   \psi_x \eta^2 + \frac{\gamma}{3} \eta^3 \big) 
d x\,.
\end{equation} 
The equations \eqref{WWZakharov} simplify considerably introducing as in  Wahl\'en \cite{W}  the  variable 
\begin{equation}\label{walhen}
	\zeta :=
	\psi-\frac{\gamma}{2}\partial_x^{-1} \big(  \eta - \langle \eta \rangle \big)   = \psi-\frac{\gamma}{2}\partial_x^{-1}\Pi_0^\bot \eta 
\end{equation}
	where $\Pi_0^\bot $ is the  projector on the zero average functions,  
 $ \Pi_0^\bot := \uno  - \Pi_0 $ and $ \Pi_0 \eta :=  \la \eta \ra $.
Actually,  under the linear change of variable  
\begin{equation}  \label{wahlen}
 \vect{\eta}{\psi} =  W \vect{\eta}{\zeta} \, , \quad  
		W=\left(
			\begin{matrix}{}
				I     					   &   0\\
				\frac{\gamma}{2}\partial_x^{-1}\Pi_0^\bot  &   I
			\end{matrix}
		\right)\, ,\quad 
        W^{-1}=\left(
			\begin{matrix}{}
				I     					   &   0 \\
				-\frac{\gamma}{2}\partial_x^{-1}\Pi_0^\bot  &   I
			\end{matrix}
		\right) \, , 
	\end{equation}
 the Hamiltonian system \eqref{WWZakharovHam} assumes the canonical Darboux form  
	\begin{equation}\label{EqWWZakharovHamNewCoordinates}
		\partial_t\left(
			\begin{matrix}{}
				\eta     \\
				\zeta
			\end{matrix}
		\right)= J \left(
			\begin{matrix}{}
				\nabla_\eta \mathcal{H}(\eta,\zeta)     \\
				\nabla_\zeta \mathcal{H}(\eta,\zeta)
			\end{matrix}\right)
   \qquad \text{where} 
   \qquad 
   J :=\left(\begin{matrix}
                    0   &   I   \\
                    -I  &   0
                    \end{matrix}\right)            \end{equation}
 is the canonical Poisson tensor and the new Hamiltonian is 
$\mathcal{H}(\eta,\zeta):= H\circ W(\eta,\zeta)$ .
  
The symplectic form associated to the Hamiltonian structure of \eqref{EqWWZakharovHamNewCoordinates} is 
    \begin{equation}\label{W}
		\cW\left(\vect{\eta}{\zeta},\vect{\eta_1}{\zeta_1}\right)=
  \left\langle J^{-1}
  \vect{\eta}{\zeta}, 
  \vect{\eta_1}{\zeta_1}\right\rangle
  =
        \frac{1}{2\pi}    \int_{ \T} \eta(x) \zeta_1(x)-\eta_1(x) \zeta(x)\, d x 
    \end{equation}
    where 
    $
    \langle f  , g \rangle :=
    \frac{1}{2\pi}    \int_{ \T} f (x) \cdot g (x) d x $  
    denotes the $L^2(\mathbb{T}, \mathbb{R}^2)$ real scalar product, so that
    \begin{equation}\label{sympl gradient}
    \di_u \cH(u)[ \cdot ] = 
    \cW ( J \nabla \cH (u), \cdot ) \, , 
    \end{equation}
   i.e. the Hamiltonian vector 
    $J \nabla \cH (u) $ field is the symplectic gradient of $ \cH (u) $. 
 Note that the vector field $ J \grad \cH (\eta, \zeta)$ in \eqref{EqWWZakharovHamNewCoordinates} does not depend on $ \zeta_0 = \la \zeta \ra $ and the first component has zero average, so that  the average 
$ \la  \eta \ra  $ 
is a prime integral of
\eqref{EqWWZakharovHamNewCoordinates}, as well as for \eqref{WWZakharov}. 
 \\[1mm]
{\bf Reversible structure and $ O(2)$-symmetry.} 
The Hamiltonian system \eqref{EqWWZakharovHamNewCoordinates} possesses 
both a $ \Z_2 $ and an  $ {\mathbb S}^1 $-symmetry. 
Indeed it is  reversible, namely 
	\begin{equation}\label{sym}
\cH \circ 	\mathscr{S} = \cH 
\qquad \text{where} \qquad 
  \mathscr{S}
		\left(\begin{matrix}{}
				\eta\\
				\zeta
			\end{matrix}
		\right)(x) :=
		\left(\begin{matrix}{}
				\eta(-x)\\
				-\zeta(-x)
			\end{matrix}
		\right) \, , 
	\end{equation}
    (note that $ \mathscr{S} = \mathscr{S}^{-1}$), 
 and, 
since the bottom of the fluid domain is flat, it is space invariant, namely 
\begin{equation}\label{sym tau}
\cH \circ \tau_\theta = \cH 
\qquad \text{where} \qquad 
		\tau_\theta
		\left(\begin{matrix}{}
				\eta\\
				\zeta
			\end{matrix}
		\right)(x) :=
		\left(\begin{matrix}{}
				\eta(x-\theta)\\
				\zeta(x-\theta)
			\end{matrix}
		\right) \, , \quad  \forall \theta\in \mathbb{R} \, .  
	\end{equation} 
On a phase space of  $ 2 \pi $-periodic  functions
$ (\eta (x), \zeta(x) )$,  
the translation operators $ \tau_\theta $ in \eqref{sym tau}
  form a representation of 
  the group $ \mathbb{S}^1 := \T = \R \slash (2 \pi \Z) $ 
  into the group of linear transformations of the phase space, and we write $ (\tau_\theta)_{\theta \in \mathbb{S}^1} $ .
Note also that $ (\tau_\theta)_{\theta \in \mathbb{S}^1} $ 
is a one parameter group of symplectic maps. 
According to Noether theorem the associated first integral 
of
\eqref{EqWWZakharovHamNewCoordinates} is the momentum 
	\begin{equation}\label{DefI}
		\mathcal{I}(\eta,\zeta):=\int_{ \mathbb{T}}\eta_x (x)  \zeta(x)  \, d x \, , 
	\end{equation}
since the  Hamiltonian vector field
generated by $\cI$ is  
\begin{equation}\label{Jetazeta}
 J\nabla \mathcal{I}(\eta,\zeta)=\partial_x\vect{\eta}{\zeta} \, , 
\end{equation}
 which is the generator of the group of the  translations  \eqref{sym tau}.
Also the 
momentum $ \cI $ clearly satisfies 
	\begin{equation}\label{IsItau}
	\mathcal{I}\circ\mathscr{S}=\mathcal{I} \, , 
 \quad 
 \mathcal{I}\circ\tau_\theta=\mathcal{I}\:, \quad \forall \theta \in 
 \mathbb{S}^1\, . 
	\end{equation}
These joint symmetries actually amount to the fact that $ {\cal H} $ and $  \mathcal{I} $ 
are invariant under the 
action  of the orthogonal group $ O(2) \cong \mathbb{S}^1 \rtimes \Z_2 $, cfr. Remark \ref{O2action}.
\\[1mm]
{\bf Traveling waves.}	
We seek for periodic traveling waves
 of \eqref{EqWWZakharovHamNewCoordinates}, i.e. solutions of the form $\eta(x-ct)$ and 
	$\zeta(x-ct)$ where
 $ \eta (x) , \zeta (x) $ are $ 2 \pi $-periodic.
	Substituting inside \eqref{EqWWZakharovHamNewCoordinates} we obtain
\begin{equation*}
		-c\:\partial_x\left(
			\begin{matrix}{}
				\eta	\\
				\zeta
			\end{matrix}
		\right)= J \left(
			\begin{matrix}{}
				\grad_\eta \cH(\eta,\zeta)     \\
				\grad_\zeta \cH(\eta,\zeta)
			\end{matrix}\right) \, , 
\end{equation*}
which, in view of 
\eqref{WWZakharov}, \eqref{wahlen},  \eqref{Jetazeta}
is 
	equivalent to find  
 solutions 
$ c \in \R $ and 
 $u=(\eta,\zeta) $  of the nonlinear equation 
 \begin{equation}\label{eq:H}
 \qquad 
  \cF(c,u):=  c \pa_x u + 
  J  \grad \cH (u)  =  
  J(\grad \cH + c \grad \cI)(u) 
  = 0  
  \, . 
 \end{equation}
We regard $ \cF (c, \cdot)  $ as a map defined in  
a dense subset $ X \subset L^2( \T) \times L^2_0(\T)$  that we define below in \eqref{spaceX},
with values in the  space $ Y $ that we will introduce in \eqref{spaceY}
, \begin{equation}\label{spaziY}
  \cF\colon \R \times X \to 
 Y \subset L^2_0( \T) \times L^2( \T) \, , \quad
 (c, u) \mapsto  \cF (c, u ) \, ,
\end{equation}
where $L^2_0(\T)  $ is the subspace of $ L^2 (\T)$ of  zero average functions.
Since $ J$ is invertible, the equation \eqref{eq:H} is equivalent 
 to search 
 critical  points,
 i.e. equilibria, of 
 the Hamiltonian 
	\begin{equation}\label{Variational}
\Psi (c, \cdot ) : X \to \R \, , \quad 	
 \Psi (c, u ) := 
 (\cH+c \cI)(u) \, , 
	\end{equation}
for some value of the moving frame 
speed $ c $. 

By the group symmetries \eqref{sym tau}, \eqref{sym}, \eqref{IsItau},  
if $ u $ is a Stokes wave solution of
\eqref{eq:H} then each  translated function 
$ \tau_\theta u $ and  reflected one 
$ \mathscr{S} u $ are solutions as well.  
We shall say that two non-trivial Stokes waves   
solutions of \eqref{eq:H}  
are {\it geometrically  distinct}  if they are not obtained 
by applying the translation operator 
$ \tau_\theta $ or the reflection operator $ \mathscr{S} $ to the other one. 
\\[1mm]
{\bf Functional setting.} 
We define 
for $  \sigma > 0 $ and $ s \in \R $, the Hilbert space  
\begin{equation}\label{spaceX}
X := H^{\sigma,s} \times H^{\sigma,s}_0  := 
H^{\sigma,s}(\T) \times H^{\sigma,s}_0(\T)   
\end{equation}
where  $H^{\sigma,s}(\T)$ is the space of $ 2 \pi $-periodic analytic functions 
$ u(x) = \sum_{k \in \mathbb{Z}} u_k e^{\im k x} $ 
with norm
$$ \| u \|_{\sigma,s}^2 := \sum_{k \in \mathbb{Z}} |u_k|^2 \langle k \rangle^{2s} 
e^{2 \sigma |k|} < + \infty \,  , \quad \la k \ra := \max\{1,|k|\} \, , 
$$
and $H^{\sigma, s}_0:= H^{\sigma,s} \cap L^2_0(\T)$.
For any $ \sigma \geq 0 $ and
$ s > 1/ 2 $ each space 
$H^{\sigma,s}(\T)$ is an algebra with respect to the product of functions.

Throughout  the  paper we assume that the space $ X $ in  \eqref{spaceX} has 
 Sobolev exponent  
$ s \geq 7 / 2 $ with $ s + \tfrac12 \in \N $. 
This allows us to directly apply  
\cite{BMV2}[Theorem 1.2] 
about the analyticity  of the Dirichlet-Neumann operator in the proof of Lemma \ref{range equation}. 
We remark that this is actually  not restrictive since $ \sigma > 0 $ is arbitrary and, for any   
$ \sigma' < \sigma $, we have  
$ H^{\sigma',s'} (\T) \subset H^{\sigma,s} (\T) $ for any $  s' \in \R $.

The target space in \eqref{spaziY} is, in view of \eqref{WWZakharov} and \eqref{walhen}, 
\begin{equation}\label{spaceY}
Y := H^{\sigma,s-1}_0(\T) \times H^{\sigma,s-2}(\T) \mbox{ if } \kappa >0 \, , \ \ 
Y := H^{\sigma,s-1}_0(\T) \times H^{\sigma,s-1}(\T) \mbox{ if } \kappa = 0 \, . 
\end{equation}
Note that  $ \cF (c,0) = 0 $ for any $ c \in \R $.  
We are going to prove, 
by means of variational arguments,
that, for {\it any} value of the parameters 
$$ 
(g,\tth, \kappa, \gamma) \in 
(0,+\infty) 
\times (0,+\infty] 
\times [0,+\infty) 
\times \R \, , 
$$
{\it any} point $(c_*, 0) $ where $  {\cal L}_{c_*} := \di_u \cF(c_*,0)$ is not invertible
(this is a necessary condition for bifurcation)
is actually a point of bifurcation of non-trivial Stokes waves solutions of  
$ \cF(c,u) = 0 $. 
We now present  in detail the main results and techniques of proof. 
\\[1mm]
{\bf Main results.}
In Section \ref{sec:linea} we  diagonalize the 
operator $  \di_u \cF (c,0) $,  written explicitly in \eqref{dFc0}-\eqref{illinea}, and we 
prove that all the possible speeds  of bifurcation form  the set 
\begin{equation}\label{bifspeed}
\mathsf C := \left\{ 
 \frac{\Omega_j(g, \tth, \kappa, \gamma)}{j} \colon j \in \Z\setminus \{0\}\right\} 
\end{equation}
where  
 $\Omega_\xi  :=
  \Omega_\xi (g, \tth, \kappa, \gamma) $, 
  $ \xi \neq 0 $,  is the dispersion relation 
  of the gravity-capillary water waves equations with constant vorticity  
    \begin{equation}\label{Omega}
		\Omega_\xi  
  :=
  \begin{cases}
			\frac{\gamma}{2}\tanh(\tth \xi )+ 
					\sqrt{\big(g+ \kappa \xi^2)
     \xi \tanh(\tth \xi ) +
     \frac{\gamma^2}{4} 
     \tanh^2 (\tth \xi)} \, ,
     \quad \text{if} \   
     \tth < + \infty \\
     \frac{\gamma}{2}
\sign(\xi ) + 
					\sqrt{\big(g+ \kappa \xi^2) |\xi| 
     +\frac{\gamma^2}{4} } \, ,
     \qquad \qquad \qquad
     \qquad \quad \    \   \text{if} \ 
     \tth = + \infty \, .
     \end{cases}
	\end{equation}
Sometimes, in the Stokes waves literature, 
the function $j \mapsto \frac{\Omega_j}{j} $ is called the dispersion relation as well.
    
Then we fix any ``wave vector" $j_* \in \Z\setminus \{0\}$ and 
we consider the bifurcation speed 
\begin{equation}\label{def:c*}
c_*= \frac{\Omega_{j_*}}{j_*} \in \mathsf C \, ,
\end{equation}
for which $ {\cal L}_{c_*}$ possesses a non-trivial kernel, see \eqref{cLincl}.
In this paper we prove the following result, further detailed in Theorems \ref{simpl} and \ref{simplCN}.

\begin{theorem}\label{simpl0} 
For any value of gravity, depth, surface tension, vorticity
$ (g,\tth, \kappa, \gamma) \in 
(0,+\infty) 
\times (0,+\infty] 
\times [0,+\infty) 
\times \R $ 
and any wave vector $ j_* \in \Z \setminus \{ 0 \} $ there exist small amplitude non-trivial 
analytic Stokes waves solutions $ u \in X $ 
of $ \cF(c,u)= 0 $ with speed $c $ close  to
$ c_* := c_*(g,\tth, \kappa, \gamma,j_*)$.
\end{theorem}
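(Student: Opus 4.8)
The plan is to reduce the bifurcation problem to a finite-dimensional one via an equivariant Lyapunov--Schmidt decomposition that respects the $O(2)$-symmetry, and then to solve the bifurcation equation variationally rather than by the implicit function theorem -- this is what allows us to cover \emph{all} bifurcation speeds $c_*$, including the resonant ones for which $\dim\ker\mathcal{L}_{c_*}$ exceeds the minimal value. First I would study the linearized operator $\mathcal{L}_{c_*}=\di_u\cF(c_*,0)$, which is explicitly a Fourier multiplier once one diagonalizes the constant-coefficient system \eqref{dFc0}--\eqref{illinea}; its kernel is spanned by the modes $e^{\im k x}$ with $\Omega_k/k = c_* = \Omega_{j_*}/j_*$, together with the constant-mode degeneracy coming from the conserved average $\langle\eta\rangle$. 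Because of the reversibility \eqref{sym} and the translation invariance \eqref{sym tau}, one restricts to the fixed space of $\mathscr{S}$ (functions with $\eta$ even, $\zeta$ odd) to break the $S^1$-orbit degeneracy, at which point the kernel becomes genuinely finite-dimensional; call it $V$. I would split $X = V \oplus V^{\perp}$ with the corresponding spectral projections $P, P^\perp$, noting that $\mathcal{L}_{c_*}$ restricted to $V^\perp$ is invertible with bounded inverse on the analytic scales $H^{\sigma,s}$ (the Fourier multiplier is bounded below away from the resonant modes, uniformly, thanks to the asymptotics of $\Omega_k$).

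Next I would set up the Lyapunov--Schmidt reduction of $\cF(c,u)=0$. Writing $u = v + w$ with $v = Pu \in V$, $w = P^\perp u \in V^\perp$, the equation $P^\perp \cF(c, v+w) = 0$ is solved for $w = w(c,v)$ by the implicit function theorem in the analytic category -- here is where one invokes \cite{BMV2}[Theorem 1.2] on the analyticity of the Dirichlet--Neumann operator, exactly as flagged in the functional-setting paragraph, to guarantee that $\cF$ is analytic $\R\times X \to Y$ and that the Nemytskii-type estimates close on the spaces \eqref{spaceX}--\eqref{spaceY}. One obtains $w(c,v) = O(\|v\|^2)$ analytically, and the remaining \emph{bifurcation equation} is the finite-dimensional system $P\,\cF(c, v + w(c,v)) = 0$ on $V$. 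The crucial structural point, which I would verify carefully, is that this reduced equation is itself variational: since $\cF = J\nabla\Psi(c,\cdot)$ with $\Psi(c,u) = (\mathcal{H}+c\mathcal{I})(u)$ from \eqref{Variational}, and since the reduction is compatible with the symplectic splitting, the bifurcation equation is the critical-point equation of the reduced functional $\Psi_{\mathrm{red}}(c,v) := \Psi(c, v + w(c,v))$ on the finite-dimensional space $V$. Moreover $\Psi_{\mathrm{red}}$ inherits the residual $\Z_2$-symmetry and, because the average $\langle\eta\rangle$ is conserved, a further $\R$-gauge invariance that I would quotient out.

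I would then solve the reduced variational problem by a parametrized min-max / linking argument. After removing the trivial directions, $\Psi_{\mathrm{red}}(c,\cdot)$ near $v=0$ has a nondegenerate quadratic part governed by $\mathcal{L}_{c_*}|_V$, which vanishes exactly at $c = c_*$; perturbing $c$ slightly changes the signature, so for $c$ on one side of $c_*$ the origin becomes a saddle of the restricted functional and a standard mountain-pass (or, when $\dim V \ge 2$, a Krasnoselskii genus / $\Z_2$-equivariant min-max) argument produces a nontrivial critical point $v(c) \ne 0$ of small norm, whose norm tends to $0$ as $c \to c_*$. Pulling back through the reduction gives small-amplitude nontrivial solutions $u = v(c) + w(c, v(c)) \in X$ of $\cF(c,u)=0$ with $c$ near $c_*$, which are analytic since $X$ consists of analytic functions; distinct critical points modulo the $\Z_2$-action yield geometrically distinct Stokes waves, and parametrizing instead by the momentum $\mathcal{I}$ (as in Theorem \ref{simplCN}) is obtained by the same scheme with $c$ playing the role of a Lagrange multiplier for a fixed value of $\mathcal{I}$. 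The main obstacle I anticipate is the \emph{resonant} case: when $\Omega_k/k = \Omega_{j_*}/j_*$ has several integer solutions $k$, the kernel $V$ is higher-dimensional, the reduced functional is a genuine multivariable polynomial whose leading terms must be analyzed to ensure a nontrivial linking geometry (one cannot just read off a simple eigenvalue as in Crandall--Rabinowitz), and one must rule out that all small critical points are forced to be trivial; handling this -- presumably via the explicit characterization of the resonant set in Proposition \ref{KER24} together with a careful expansion of the cubic/quartic part of $\mathcal{H}\circ W$ -- is where the real work lies, and it is precisely the improvement over \cite{M,W}.
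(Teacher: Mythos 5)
Your overall framework matches the paper's: an equivariant Lyapunov--Schmidt reduction that preserves the variational structure, followed by critical-point theory on the finite-dimensional bifurcation equation. The reduction you describe (range equation solved analytically via \cite{BMV2}, bifurcation equation identified as the Euler--Lagrange equation of a reduced functional $\Psi_{\mathrm{red}}(c,v)=\Psi(c,v+w(c,v))$) is essentially Section~\ref{sec:LS} of the paper.

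However, there is a genuine gap precisely where you wave your hands. You write that for $c$ on one side of $c_*$ a ``standard mountain-pass\dots argument produces a nontrivial critical point,'' and you locate the ``real work'' in a ``careful expansion of the cubic/quartic part of $\mathcal{H}\circ W$.'' Both of these miss the actual obstruction. The reduced functional $\Phi(c,\cdot)$ is defined \emph{only on a small ball} $B_r^V$, and in the resonant case the degenerate functional $\Phi(c_*,\cdot)$ may take both positive and negative values arbitrarily close to $v=0$ (the paper's Case~$(iii)$). When this happens, the positive sublevels of $\Phi(c,\cdot)$ need not be separated from $\partial B_r^V$, so the standard mountain-pass deformation lemma fails: the negative-gradient flow cannot simply be cut off near the boundary, and there is no reason a priori that Palais--Smale sequences stay inside $B_r^V$. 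The paper's resolution is to replace the ball by a \emph{stable Conley isolating block} (a Gromoll--Meyer set) $W$ around the isolated critical point $v=0$ of $\Phi(c_*,\cdot)$, whose exit set $W_-$ lies in a strictly negative level and is stable under $C^1$-perturbations of the functional (Proposition~\ref{GromollMeyer}). Min-max is then carried out over paths from $0$ to $W_-$, and a deformation argument tailored to this block produces the Palais--Smale sequence (Lemmas~\ref{lemPS}, \ref{lemPS1}, Appendix~\ref{sec:App}). This topological device is the heart of the proof in the resonant case and does \emph{not} require any expansion of the cubic or quartic Taylor coefficients of the reduced Hamiltonian -- indeed the argument is designed to be insensitive to them. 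Your proposal contains no substitute for this step.

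Two smaller points. First, restricting from the outset to the $\mathscr{S}$-fixed subspace is not what the paper does, and your stated motivation (``at which point the kernel becomes genuinely finite-dimensional'') is confused: the kernel of $\cL_{c_*}$ is already finite-dimensional ($2$ or $4$ real dimensions by Proposition~\ref{KER24}) with no restriction at all. The paper keeps the full $O(2)$-symmetric kernel and exploits the group action to count geometrically distinct critical orbits; restricting to $\mathscr{S}$-fixed functions would a priori only yield symmetric Stokes waves and would still face the same Conley-block difficulty one dimension down. Second, the constant modes $v_0^{(1)},v_0^{(2)}$ are \emph{not} in the kernel in the paper's functional setting (Remark~\ref{Lc on V0}): they sit inside $W$, and the apparent degeneracy from $\langle\eta\rangle$ is absorbed there, so there is no ``$\R$-gauge invariance to quotient out'' in the bifurcation equation.
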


The kernel of $ \cL_{c_*}$ has 
real dimension at least $2 $.
If  ker$(\cL_{c_*}) = 2 $ (non-resonant case) the
existence of Stokes waves was proved  in \cite{W,M}.
The main novelty of the paper  is to 
prove bifurcation 
of Stokes waves
also 
when 
dim ker$(\cL_{c_*}) > 2 $, called  ``resonant bifurcations".
In order to see if 
ker$({\cal L}_{c_*})$ is  2 dimensional 
or has a higher dimension,  we have to determine if there exist   
other  integers $ j $ such that  
\begin{equation}
\label{omegainter}
    \frac{\Omega_j}{j} = c_* = 
\frac{\Omega_{j_*}}{j_*} \, . 
\end{equation}
In  Proposition \ref{KER24} we fully characterize
the values of  $ g , \tth , \kappa , \gamma $ such that  
dim ker$({\cal L}_{c_*}) = 2 $  
or it is higher dimensional, in such a case  it turns out that 
dim ker$({\cal L}_{c_*}) = 4 $ (resonant case). 
We introduce, for any depth $ \tth < + \infty $,  the ``vorticity modified-Bond" numbers
\begin{equation}\label{def:bond}
     \tB_\pm(g,\tth, \kappa, \gamma):=  \frac{\kappa}{g\tth^2}-\frac{\tth\gamma^2}{6g}\Big(1\pm \sqrt{1 + \frac{4g}{\tth \gamma^2} } \ \Big) \, ,
\end{equation}
which reduce, in the irrotational case $\gamma =0$, 
to the ``classical'' Bond number
$\tB_\pm(g,\tth, \kappa, 0) = \frac{\kappa}{g\tth^2} $.
In Section \ref{sec:dispe} we prove the following result. 

 \begin{proposition}\label{KER24}
 {\bf (Kernel of $\cL_{c_*} $).}
For any  $ g > 0 $, $ \kappa \geq 0 $,  
$ \tth \in (0,+\infty] $,
$ \gamma \in \R $, $j_* \in \Z\setminus\{0\}$, 
let $c_* := c_*(g,\tth, \kappa, \gamma,j_*)$ be the speed defined in \eqref{def:c*}. The following holds true: if 
\begin{itemize}
    \item[(i)] either $\tth < \infty$, $\kappa >0$ and 
the vorticity-Bond numbers 
$\tB_\pm(g,\tth, \kappa, \gamma)$  in \eqref{def:bond}
fulfill
 $$    \tB_+(g,\tth, \kappa, \gamma) < \tfrac13 \, \mbox { if } 
 \, c_* >0 \, , 
 \quad
  \tB_-(g,\tth, \kappa, \gamma) < \tfrac13 \, \mbox { if }
  \, c_* <0 \, ; 
 $$
 \item[(ii)] or $\tth = + \infty$  and $\kappa >0$;  
\end{itemize}
then the kernel of the operator $\cL_{c_*} $ has dimension  either two or four.
 In all other cases  dim ker$(\cL_{c_*}) = 2 $.  
For any pair of 
integers $ 1 \leq |j_* | < |j |$ 
with $\sign(j) = \sign (j_*) $
, for any  $ g > 0 $, $ \tth \in (0,+\infty] $, 
$ \gamma \in \R $, 
there exists $ \kappa >0  $ such that \eqref{omegainter} holds, in particular 
$ \dim \ker(\cL_{c_*}) = 4 $.  
    \end{proposition}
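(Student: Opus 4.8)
The plan is to reduce the statement to a one–variable analysis of the two branches of the dispersion relation, and eventually to an elementary inequality for $\tanh$. From the diagonalization of $\cL_{c_*}$ performed in Section~\ref{sec:linea}, $\cL_{c_*}$ is a Fourier multiplier whose restriction to the $k$-th mode is a $2\times2$ matrix $L_k(c_*)$ with $\det L_k(c)=-k^2c^2+\gamma\,k\tanh(\tth k)\,c+(g+\kappa k^2)\,k\tanh(\tth k)$, so $\det L_k(c)=0$ if and only if $kc=\Omega_k$ or $kc=-\Omega_{-k}$; since $(g+\kappa k^2)\,k\tanh(\tth k)>0$ these two roots are distinct, hence whenever $\det L_k(c_*)=0$ the kernel of $L_k(c_*)$ is one–dimensional. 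Using $\Omega_m>0$ and $-\Omega_{-m}<0$ for $m\in\N$ together with $\mathrm{sign}(c_*)=\mathrm{sign}(j_*)$, one obtains
$$
\dim_\R\ker\cL_{c_*}=2\,\#\big\{\,m\in\N:\ f_{\mathrm{sign}(j_*)}(m)=c_*\,\big\},\qquad f_+(\xi):=\tfrac{\Omega_\xi}{\xi},\ \ f_-(\xi):=-\tfrac{\Omega_{-\xi}}{\xi}\quad(\xi>0),
$$
and $m=|j_*|$ always lies in this set, so $\dim\ker\cL_{c_*}\ge2$. Writing $u(\xi):=\tanh(\tth\xi)/\xi$ and $v(\xi):=\xi\tanh(\tth\xi)$ (and $u(\xi):=1/\xi$, $v(\xi):=\xi$ if $\tth=+\infty$), squaring \eqref{Omega} shows that both branches solve $f^2-\gamma uf-gu-\kappa v=0$, i.e.\ $f_\pm(\xi)=\tfrac{\gamma}{2}u(\xi)\pm\sqrt{(g+\kappa\xi^2)u(\xi)+\tfrac{\gamma^2}{4}u(\xi)^2}$. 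Thus the Proposition reduces to showing that the above count is at most $2$, that it equals $2$ precisely in the stated cases, and the last assertion. When $\kappa=0$ one has $f_\pm=\tfrac\gamma2u\pm\sqrt{gu+\tfrac{\gamma^2}4u^2}=\phi_\pm(u)$ with $\phi_\pm$ strictly monotone on $(0,+\infty)$ (the verification reduces to $g^2>0$); since $u$ is strictly decreasing, $f_\pm$ is strictly monotone, hence injective, the count is $1$ and $\dim\ker\cL_{c_*}=2$ — this covers all remaining cases with $\kappa=0$.

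For $\kappa>0$ the heart of the matter is: \emph{every critical point of $f_+$ on $(0,+\infty)$ is a strict local minimum, and every critical point of $f_-$ is a strict local maximum.} Differentiating $f_\pm^2-\gamma uf_\pm-gu-\kappa v=0$ twice and imposing $f_\pm'=0$ gives, at such a point, $(\gamma f_\pm+g)\,u'=-\kappa v'$ and then $f_\pm''=\pm\dfrac{\kappa\,(u''v'-u'v'')}{2\,|u'|\,\sqrt{(g+\kappa\xi^2)u+\tfrac{\gamma^2}4u^2}}$, so it suffices to prove $u''v'-u'v''>0$ on $(0,+\infty)$. With $T:=\tanh(\tth\xi)$, $T'=\tth(1-T^2)$, $T''=-2\tth\,TT'$ one computes
$$
u''v'-u'v''=\frac{2}{\xi^3\cosh^2(\tth\xi)}\Big(\sinh^2(\tth\xi)+\tth\xi\tanh(\tth\xi)-2(\tth\xi)^2\Big),
$$
and the bracket is positive for every $t:=\tth\xi>0$ because $\sinh^2t>t^2+\tfrac{t^4}3$, while $\tanh t\ge t-\tfrac{t^3}3$ (since $\tfrac{d}{dt}(\tanh t-t+\tfrac{t^3}3)=t^2-\tanh^2t\ge0$), whence $\sinh^2t+t\tanh t>2t^2$. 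Consequently $f_\pm$ has at most one critical point and is either strictly monotone or unimodal; in particular each value is attained at most twice, so the count is $\le2$ and $\dim\ker\cL_{c_*}\in\{2,4\}$. For $\tth=+\infty$ and $\kappa>0$ this is already case~(ii), with no further restriction.

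For $\tth<+\infty$ and $\kappa>0$ we identify when the count can be $2$. Here $f_\pm$ is $C^\infty$ and even at $\xi=0$, $f_\pm(\xi)=c_0^\pm+a_\pm\xi^2+O(\xi^4)$ with $c_0^\pm=\tfrac{\gamma\tth}2\pm\sqrt{g\tth+\tfrac{\gamma^2\tth^2}4}$; inserting $u(\xi)=\tth-\tfrac{\tth^3}3\xi^2+O(\xi^4)$ yields $a_\pm$ explicitly. Since $\kappa>0$ forces $f_+(\xi)\to+\infty$ (resp.\ $f_-(\xi)\to-\infty$) as $\xi\to+\infty$ while $f_\pm(0^+)=c_0^\pm$ is finite, the previous trichotomy shows that $f_+$ is non-monotone only if $a_+<0$, $f_-$ only if $a_->0$ (the threshold $a_\pm=0$ being settled by the next Taylor coefficient), and only in that case can the count be $2$; comparing $a_\pm$ with \eqref{def:bond} then shows that the count equalling $2$ forces $\tB_+<\tfrac13$ if $c_*>0$ and $\tB_-<\tfrac13$ if $c_*<0$ — precisely the complement of the ``other cases'', in which therefore $\dim\ker\cL_{c_*}=2$. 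Finally, given $1\le|j_*|<|j|$ with $\mathrm{sign}(j)=\mathrm{sign}(j_*)$, take $j_*,j>0$ (the negative case by $f_-(\,\cdot\,;\gamma)=-f_+(\,\cdot\,;-\gamma)$) and set $\Phi(\kappa):=f_+(j)-f_+(j_*)$: by the $\kappa=0$ analysis $\Phi(0)<0$ (as $f_+$ is strictly decreasing and $j>j_*$), while $f_+(\xi)=\sqrt\kappa\,\sqrt{\xi\tanh(\tth\xi)}\,\big(1+o(1)\big)$ as $\kappa\to+\infty$ with $\xi\mapsto\sqrt{\xi\tanh(\tth\xi)}$ increasing gives $\Phi(\kappa)>0$ for $\kappa$ large; the intermediate value theorem yields $\kappa>0$ with $\Phi(\kappa)=0$, i.e.\ \eqref{omegainter}, so the count is $2$ and $\dim\ker\cL_{c_*}=4$.

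The only genuinely non-routine step is the unimodality above — reducing $f_+''>0$ (resp.\ $f_-''<0$) at critical points to the single inequality $\sinh^2t+t\tanh t>2t^2$ for $t>0$; the remainder is bookkeeping, the most delicate part being the comparison between the sign of the Taylor coefficient $a_\pm$ (together with the exact condition for two distinct integers to resonate) and the vorticity-Bond numbers $\tB_\pm$, including the borderline $\tB_\pm=\tfrac13$.
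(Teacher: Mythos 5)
Your overall strategy is the same as the paper's: diagonalize $\cL_{c_*}$ so that the kernel dimension becomes $2$ times the number of positive integers $m$ with $f_{\mathrm{sign}(j_*)}(m)=c_*$, reduce everything to the shape of the graph of $f_\pm$, and make the key lemma that any interior critical point of $f_+$ (resp.\ $f_-$) is a strict local minimum (resp.\ maximum), so $f_\pm$ is at most unimodal. The remaining steps — the Taylor expansion of $f_\pm$ at $0$ to express $a_\pm$ via the vorticity-Bond numbers, and the intermediate value argument in $\kappa$ for the last claim — also track the paper's proof of Proposition~\ref{paravari}.

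Where you genuinely diverge is the proof of the unimodality lemma (the paper's Lemma~\ref{CriticalValuesAreLocalMinima}). You differentiate the algebraic relation $f_\pm^2-\gamma u f_\pm - gu -\kappa v=0$ twice and reduce the sign of $f_\pm''$ at a critical point to the single inequality $u''v'-u'v''>0$, which for $u=\tanh(\tth\xi)/\xi$, $v=\xi\tanh(\tth\xi)$ becomes the elementary estimate $\sinh^2 t+t\tanh t>2t^2$ for $t>0$. I checked the algebra and the inequality (the Taylor coefficients of $\sinh^2$ and $t\tanh$ cancel up to order $t^4$ and leave $+\tfrac{8}{45}t^6+\dots$); the argument is correct and covers finite and infinite depth uniformly. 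This is self-contained, whereas the paper proves the lemma for $\tth=+\infty$ via the reciprocal substitution $g_\pm=1/f_\pm$ (which bypasses the $\tanh$ inequality entirely) and refers to Martin~\cite{M} for $\tth<+\infty$. Your version buys a direct, fully self-contained proof at the cost of an explicit hyperbolic computation.

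There is, however, a genuine gap in the borderline case $\tB_\pm=\tfrac13$, i.e.\ $a_\pm=0$. The proposition asserts that dim\,ker $=2$ there, so you must show that $f_+$ (resp.\ $f_-$) is still injective when $f_\pm''(0^+)=0$. Your parenthetical ``the threshold $a_\pm=0$ being settled by the next Taylor coefficient'' is an unproven assertion: you neither compute $f_\pm^{(4)}(0)$ nor argue its sign, and unimodality alone does not rule out $f_+$ decreasing near $0$ (note that your second-derivative formula has $|u'|$ in the denominator, and $u'(0)=0$, so the lemma does not apply at $\xi=0$). The paper resolves this cleanly via a continuity-in-$\kappa$ argument: if $f_+''(0^+;\kappa)=0$ then $f_+''(0^+;\kappa+\varepsilon)>0$ for all $\varepsilon>0$, hence $f_+(\cdot;\kappa+\varepsilon)$ is monotone, hence so is the limit $f_+(\cdot;\kappa)$, and monotone plus the unimodality lemma gives strictly monotone. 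You should replace the parenthetical with an argument of this kind (or actually compute the next Taylor coefficient and show it is nonnegative).
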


The non-resonant bifurcations when dim ker$(\cL_{c_*}) = 2 $
have been studied in \cite{W,M} by means of the Crandall-Rabinowitz
   bifurcation theorem, as we report in paragraph \ref{parNR}. 
   In this case a minor improvement 
   is to  prove also 
   the analyticity of the Stokes waves.  
The main novelty of this paper is to prove 
the bifurcation of 
Stokes waves in the resonant case when the kernel  of $ \di_u \cF(c_*,0) $    
is  $4$ dimensional. 
The key idea is to use 
critical point theory,
see  Section \ref{sec:Reso}.  
 
\smallskip
Our  first result 
proves the existence  of 
solutions of 
$ \cF (c, u) = 0 $ 
for any fixed speed 
$ c $ close to $ c_* $.
According to all the possible values  of 
$ g, \tth, \kappa,  \gamma  $ and $ j_* $,
the non-trivial Stokes wave 
solutions of \eqref{WWZakharov}  have speed either 
$ c = c_* $ (case ($i$)), or 
$ c < c_* $ (sub-critical bifurcation) or $ c> c_* $
(super-critical bifurcation) as described in  
cases ($ii$)-($iii$) below.

\begin{theorem}\label{simpl} 
{\bf (Stokes waves with fixed speed)}
For any 
$ (g,\tth, \kappa, \gamma) \in 
(0,+\infty) 
\times (0,+\infty] 
\times [0,+\infty) 
\times \R $ 
and any integer $ j_* \in \Z \setminus \{ 0 \} $ 
such that
$ \ker (\cL_{c_*})$ has dimension $4$
, the following
alternatives may occur.   
Either
\begin{description}
\item ($i$) 
$ u = 0 $ is a non-isolated 
Stokes wave solution
with speed $ c_* := c_*(g,\tth, \kappa, \gamma,j_*)$  defined in \eqref{omegainter}
of $ \cF (c_*, u) = 0 $;
\end{description}
or
\begin{description}
\item ($ii$) there is a one sided neighborhood ${\cal U}$ of $ c_* $ 
such that 
for any $  c \in  {\cal U}\setminus \{ c_* \} $ the equation $ \cF (c, u) = 0 $  
possesses  at least two geometrically distinct non-trivial 
analytic Stokes wave solutions $ u_1(c), u_2(c)  \in X $ with speed $ c $
, tending to $0 $  as $ c \to c_* $; 
\end{description}
or
\begin{description}
\item ($iii$) there is a neighborhood $ {\cal U}$ of $ c_* $ 
such that 
for any $ c \in  {\cal U}\setminus \{ c_* \} $ the equation 
$ \cF (c, u) = 0 $  possesses at least one non-trivial 
analytic Stokes wave  solution  $ u(c) \in X $ with speed $ c $
,
tending to $0 $  as $ c \to c_* $.
\end{description}
\end{theorem}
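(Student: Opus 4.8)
===BEGIN PROOF PROPOSAL===

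\textbf{Strategy.} The plan is to treat \eqref{eq:H} in the resonant case $\dim\ker(\cL_{c_*})=4$ as a variational bifurcation problem with an $O(2)$-symmetry, and to extract nontrivial solutions via a Lyapunov--Schmidt reduction to a finite-dimensional \emph{bifurcation function} defined on the kernel, to which we then apply equivariant critical point theory. First I would set up the reduction: since $\cL_{c_*}=\di_u\cF(c_*,0)=J(\grad\cH+c_*\grad\cI)$ is self-adjoint (in the sense induced by $\cW$) and Fredholm between $X$ and $Y$, with $4$-dimensional kernel $V:=\ker(\cL_{c_*})$ and closed range of codimension $4$, we split $X=V\oplus V^\perp$ and $Y=R\oplus R^\perp$ accordingly. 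Writing $u=v+w$ with $v\in V$, $w\in V^\perp$, the equation $\cF(c,v+w)=0$ projects onto $R$ and $R^\perp$; the $R$-component is solved for $w=w(c,v)$ by the implicit function theorem near $(c_*,0)$, using that $\cL_{c_*}|_{V^\perp}\colon V^\perp\to R$ is an isomorphism, with $w$ analytic in $(c,v)$ and $w(c,0)\equiv0$, $w=O(|v|^2)$. Crucially, because the whole problem is the critical point equation for $\Psi(c,\cdot)=(\cH+c\cI)$, the reduced equation $\Pi_{R^\perp}\cF(c,v+w(c,v))=0$ is itself the critical point equation for the \emph{reduced functional}
\begin{equation}\label{reducedfunc}
\Phi(c,v):=\Psi\big(c,\,v+w(c,v)\big),\qquad v\in V,
\end{equation}
a finite-dimensional smooth function on $V\cong\R^4$.

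\textbf{Exploiting the symmetry.} The group $O(2)\cong\mathbb S^1\rtimes\Z_2$ acts on $X$ by the symplectic translations $\tau_\theta$ of \eqref{sym tau} and the reversor $\mathscr S$ of \eqref{sym}, leaving $\cH$, $\cI$ hence $\Psi(c,\cdot)$ invariant, so it leaves $\Phi(c,\cdot)$ invariant once we check that $V$ and the reduction $w(c,\cdot)$ are equivariant (this follows from uniqueness in the implicit function theorem together with invariance of the splitting, which can be arranged since $\cL_{c_*}$ commutes with the group). On $V$, which by the diagonalization of Section~\ref{sec:linea} consists of trigonometric polynomials supported on the frequencies $\pm j_*$ and $\pm j$ (the two resonant wave vectors from \eqref{omegainter}), the $\mathbb S^1$-action is the standard rotation $e^{\im\ell\theta}$ on the $\ell$-th Fourier mode and $\mathscr S$ acts as a reflection; thus $V$ carries a sum of two distinct nontrivial irreducible $\mathbb S^1$-representations of "weights" $j_*$ and $j$. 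Writing $v$ in polar-type coordinates $(\rho_1,\rho_2,\text{angles})$ adapted to these two circles, invariance forces $\Phi(c,v)=\widetilde\Phi(c,\rho_1^2,\rho_2^2)$ plus possibly a resonant angle term only if $j$ and $j_*$ are rationally related in a way producing a nonzero monomial; a Taylor expansion shows
\begin{equation}\label{Phiexp}
\Phi(c,v)=\Phi(c,0)+\tfrac12\big[a_1(c)\,\rho_1^2+a_2(c)\,\rho_2^2\big]+O(|v|^3),
\end{equation}
where $a_i(c)$ are the eigenvalues of the Hessian $D_v^2\Phi(c,0)$ restricted to the $i$-th plane, with $a_i(c_*)=0$ and $a_i'(c_*)\neq0$ by a transversality/Kato-perturbation computation on the family $\cL_c$ near $c_*$. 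The sign pattern of $(a_1'(c_*),a_2'(c_*))$ produces exactly the trichotomy of the statement.

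\textbf{From the reduced functional to the three alternatives.} Now I would argue case by case on the reduced critical point problem for $\Phi(c,\cdot)$ on $V$. Case $(i)$: if $a_1\equiv a_2\equiv 0$ in a neighborhood of $c_*$ — which happens precisely in situation $(i)$ of Proposition~\ref{KER24}, where the two branches of $\Omega_j/j$ coincide to higher order — then the quadratic form degenerates identically and $u=0$ sits in a continuum of solutions at $c=c_*$; more precisely one shows $\cF(c_*,\cdot)$ has a nontrivial kernel persisting along a curve, giving non-isolatedness. Case $(ii)$: if $a_1'(c_*)$ and $a_2'(c_*)$ have the \emph{same} sign, then on one side ${\cal U}$ of $c_*$ both $a_1(c)$ and $a_2(c)$ are, say, negative; after removing the group orbit the reduced functional $\Phi(c,\cdot)$ restricted to a small sphere (or better, applying a mountain-pass / minimization argument on the two separate invariant subspaces, or counting via equivariant Lusternik--Schnirelmann category of the lens-type quotient) yields at least two geometrically distinct critical orbits — concretely, the constrained minimum on each of the two resonant circles $\{\rho_2=0\}$ and $\{\rho_1=0\}$ already gives two families, and they are geometrically distinct because supported on different wave vectors. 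Case $(iii)$: if $a_1'(c_*)$ and $a_2'(c_*)$ have \emph{opposite} signs, then for $c$ on either side of $c_*$ one of $a_1(c),a_2(c)$ is negative, so at least the corresponding constrained-extremum branch survives, giving one nontrivial solution; this works on a full two-sided neighborhood ${\cal U}$. In all nontrivial cases, $w(c,v)=O(|v|^2)$ and $|v|\to0$ as $c\to c_*$ force $u(c)\to0$; analyticity of $u(c)$ follows from analyticity of $w$ (via \cite{BMV2}[Theorem 1.2] on the Dirichlet--Neumann operator, as already used for Lemma~\ref{range equation}) together with analyticity of the finite-dimensional critical point as a function of $c$ by the analytic implicit function theorem applied to $\nabla_v\widetilde\Phi=0$ once one resolves the group degeneracy.

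\textbf{Main obstacle.} The delicate point is not the Lyapunov--Schmidt reduction per se but the structure of the reduced functional $\Phi(c,\cdot)$ on the $4$-dimensional kernel and, above all, the \emph{sign} of the transversal derivatives $a_i'(c_*)$: determining these signs requires a careful second-order expansion of $\cH$ (hence of the Dirichlet--Neumann operator $G(\eta)$ and the capillary term) at $u=0$, matched against the linear-in-$c$ term coming from $\cI$, and then correlating the outcome with the vorticity-Bond numbers $\tB_\pm$ of \eqref{def:bond} to decide which of cases $(i)$--$(iii)$ occurs for which parameters. The other subtle point is correctly organizing the equivariant variational count so that the solutions produced are genuinely geometrically distinct in the sense defined after \eqref{Variational} — i.e. not related by any $\tau_\theta$ or $\mathscr S$ — which is where the distinctness of the two irreducible representations of weights $j_*$ and $j$ does the work, but one must be careful when $j$ and $j_*$ are such that some nonlinear resonant coupling term appears in \eqref{Phiexp} at cubic or quartic order.

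===END PROOF PROPOSAL===
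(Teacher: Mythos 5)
Your Lyapunov--Schmidt setup and the identification of the reduced functional $\Phi(c,v)=\Psi(c,v+w(c,v))$ on the $4$-dimensional kernel $V$, together with its $O(2)$-equivariance, correctly match the paper's Section~\ref{sec:LS}. However, the subsequent case analysis rests on a false premise and does not reproduce the trichotomy of the theorem.

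The gap is in your expansion \eqref{Phiexp} and the role you assign to the signs of $a_1'(c_*)$, $a_2'(c_*)$. By Lemma~\ref{lem:Gcu} the quadratic part of $\Phi(c,v)$ is exactly $(c-c_*)\cI(v)$, and by \eqref{MomentumInCoord} one has $\cI(v)=-\tfrac12\big(j_*\rho_1^2+j\rho_2^2\big)$. Since the two resonant wave vectors satisfy $\sign(j)=\sign(j_*)$ (\eqref{segni}), this quadratic form is \emph{definite}: your $a_1'(c_*)=-j_*$ and $a_2'(c_*)=-j$ always have the \emph{same} sign, so your proposed Case~$(iii)$ (``opposite signs'') is vacuous and your scheme never produces the theorem's third alternative. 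More importantly, because the quadratic part is definite, $v=0$ is a strict local minimum (or maximum) of $\Phi(c,\cdot)$ for every $c\neq c_*$ near $c_*$, and the quadratic part carries no further Morse-theoretic information. The trichotomy of Theorem~\ref{simpl} is determined entirely by the structure of $\Phi(c_*,\cdot)=G_{\geq3}(c_*,\cdot)$ near $v=0$: $(i)$ $0$ is a non-isolated critical point of $\Phi(c_*,\cdot)$; $(ii)$ $0$ is isolated and a strict local extremum; $(iii)$ $0$ is isolated and $\Phi(c_*,\cdot)$ changes sign near $0$. Your proposal has no counterpart of this classification and no mechanism to handle case $(iii)$, which is the delicate one in the paper: the mountain-pass geometry is not confined to a sublevel of the functional, and the paper constructs a stable Conley isolating block for the degenerate critical point $v=0$ of $\Phi(c_*,\cdot)$ (Appendix~\ref{sec:App}) to recover deformation invariance of the min-max class and produce a Palais--Smale sequence.

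Two further gaps. First, the claim that constrained minimizers on the two invariant circles $\{\rho_2=0\}$ and $\{\rho_1=0\}$ yield two families of solutions is unjustified: critical points of the restriction of $\Phi(c,\cdot)$ to an $O(2)$-invariant subspace are not in general critical points of the full functional, since resonant monomials of the form $z_{j_*}^p\bar z_j^q$ with $pj_*=qj$ are allowed by the symmetry and produce nonzero transversal derivatives. The paper instead obtains the second solution in case $(ii)$ as a mountain-pass critical point distinct from the maximizer (or infinitely many if the two levels coincide). Second, the theorem asserts analyticity of $u_i(c)$ as elements of $X$ (i.e. in $x\in\T$), not in $c$; your final claim of analyticity in $c$ via the analytic implicit function theorem would require a nondegenerate reduced critical point, which is precisely what fails here, and the paper's remark after Theorem~\ref{simplCN} explicitly warns that the $c$-dependence of the Stokes waves may be irregular in the resonant case.
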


The proof, given in Section \ref{sec:speed},
is based on searching for mountain pass critical points of the
reduced Hamiltonian obtained after a variational Lyapunov-Schmidt reduction  
\`a la Fadell-Rabinowitz
\cite{FR,R3}. The most delicate case is ($iii$) 
for which we provide complete and self-contained proofs in Appendix \ref{sec:App}.
We hope that these powerful techniques could be more effectively 
used for bifurcation problems for fluid PDEs.   

In Section \ref{sec:mom} we look for solutions parametrized by the momentum. 

\begin{theorem}\label{simplCN} 
{\bf (Stokes waves with fixed momentum)}
For any 
$ (g,\tth, \kappa, \gamma) \in 
(0,+\infty) 
\times (0,+\infty]
\times [0,+\infty) 
\times \R $ 
and any integer $ j_* \in \Z \setminus \{ 0 \} $ 
such that
$ \ker ( \cL_{c_*}) $ has dimension $4$, there exists
$ a_0 > 0$
such that, 
for any $ a \in (-a_0, a_0) $, with 
$ \sign (a) = - \sign (j_*) $, 
there exist at least two geometrically distinct non-trivial 
analytic Stokes wave solutions $ u_1 (a), u_2 (a) \in  X  $
,
with 
momentum $ \cI ( u_i (a)  ) = a $ and speeds 
$ c_i (a) $, $ i = 1,2 $, namely  solutions 
$$ 
(c_i (a), u_i (a)) \,, 
\quad i = 1, 2 \, , \quad 
\text{of the 
equation} \quad \cF (c, u) = 0 \, , \quad 
\text{with} \quad 
\cI ( u_i (a)  ) = a \, .
$$
Furthermore
$ (c_i (a), u_i (a)) 
\stackrel{\R \times X}\longrightarrow  (c_*, 0)   $ as $ a \to 0 $
where $ c_* := c_*(g,\tth, \kappa, \gamma,j_*)$ is defined in \eqref{omegainter}.
\end{theorem}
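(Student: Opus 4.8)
\noindent
The plan is to follow the variational Lyapunov--Schmidt scheme used for Theorem~\ref{simpl} (see Section~\ref{sec:Reso}), but now treating the momentum $\cI$ as a \emph{constraint} instead of fixing the speed, and exploiting that---precisely because of the sign hypothesis on $a$---the relevant level set of $\cI$ is \emph{compact}, so that the two required solutions are produced simply by minimising and maximising the reduced Hamiltonian on it. Concretely, let $V:=\ker\cL_{c_*}\subset X$, which has real dimension $4$ by assumption; by Proposition~\ref{KER24} it is spanned in Fourier by the $j_*$-th and $j$-th harmonics, with $j\neq\pm j_*$, $\Omega_j/j=\Omega_{j_*}/j_*=c_*$, $\sign(j)=\sign(j_*)$, and it is invariant under the $O(2)$-action generated by $\{\tau_\theta\}_{\theta\in\T}$ and $\mathscr S$. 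Splitting $X=V\oplus V^\perp$ and solving the range equation $\Pi_{V^\perp}\cF(c,v+w)=0$ for $w=w(c,v)\in V^\perp$ near $(c_*,0)$ by the analytic implicit function theorem---using the analyticity of the Dirichlet--Neumann operator \cite{BMV2} as in Lemma~\ref{range equation}---produces an analytic, $O(2)$-equivariant $w$ with $w(c_*,0)=0$ and $D_vw(c_*,0)=0$. Since $\cF=J\nabla\Psi$ is a Hamiltonian vector field, a symplectic, $O(2)$-equivariant Lyapunov--Schmidt reduction \`a la Fadell--Rabinowitz \cite{FR,R3} reduces the search for solutions $(c,u)$ of $\cF(c,u)=0$ with $\cI(u)=a$, near $(c_*,0)$, to the search for constrained critical points of a reduced, $O(2)$-invariant analytic Hamiltonian $\widehat{\cH}$ on a level set $\{\widehat{\cI}=a\}$ of a reduced, $O(2)$-invariant analytic momentum $\widehat{\cI}$ on $V\cong\R^4$, with the speed $c$ appearing as the Lagrange multiplier and $c$ close to $c_*$. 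Moreover the leading quadratic parts satisfy $\widehat{\cH}_2=-c_*\,\widehat{\cI}_2$ on $V$---because $V=\ker\cL_{c_*}=\ker\nabla^2\Psi(c_*,0)$ with $\nabla^2\Psi(c_*,0)$ self-adjoint---and, by the diagonalisation of Section~\ref{sec:linea}, $\widehat{\cI}_2$ is a definite quadratic form of sign $-\sign(j_*)$.

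\smallskip
\noindent
Now fix $a$ with $\sign(a)=-\sign(j_*)$ and $0<|a|<a_0$, with $a_0>0$ small. Since $\widehat{\cI}=\widehat{\cI}_2+O(|v|^3)$ with $\widehat{\cI}_2$ definite of the sign of $a$, the set $\mathcal M_a:=\{v\in V:\widehat{\cI}(v)=a\}$ is a smooth compact $O(2)$-invariant hypersurface of $V$, diffeomorphic to $S^3$, contained in a ball of radius $O(\sqrt{|a|})$ about the origin---and $\mathcal M_a=\emptyset$ when $\sign(a)=\sign(j_*)$, which is why only one sign of $a$ produces waves. The restriction $\widehat{\cH}|_{\mathcal M_a}$ attains a minimum and a maximum; the sets of minimisers and of maximisers are nonempty, closed, $O(2)$-invariant, hence each is a union of $O(2)$-orbits of constrained critical points, and---unless $\widehat{\cH}$ is constant on $\mathcal M_a$, a degenerate case in which every point of the $3$-manifold $\mathcal M_a$ is critical and infinitely many $O(2)$-orbits of solutions arise---these two sets are disjoint, sitting at distinct $\widehat{\cH}$-levels. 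In all cases we obtain at least two geometrically distinct constrained critical points $v_1(a),v_2(a)\in V$ (an equivariant Lusternik--Schnirelmann argument on $\mathcal M_a\cong S^3$ with the weighted circle action gives the same bound). Undoing the reduction yields $(c_i(a),u_i(a))$ with $u_i(a)=v_i(a)+w(c_i(a),v_i(a))$, $i=1,2$: these solve $\cF(c,u)=0$ with $\cI(u_i(a))=a$, are non-trivial (since $a\neq0=\cI(0)$), analytic (since $w$ is), and geometrically distinct (since $v_1(a),v_2(a)$ lie on distinct $O(2)$-orbits).

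\smallskip
\noindent
As $a\to0$ the hypersurface $\mathcal M_a$ shrinks to $\{0\}$, so $v_i(a)\to0$ and hence $u_i(a)\to0$ in $X$, and the multiplier $-c_i(a)$, which solves $\nabla\widehat{\cH}(v_i(a))=-c_i(a)\,\nabla\widehat{\cI}(v_i(a))$, tends to $-c_*$ because $\nabla\widehat{\cH}(v)=-c_*\nabla\widehat{\cI}_2(v)+O(|v|^2)$, $\nabla\widehat{\cI}(v)=\nabla\widehat{\cI}_2(v)+O(|v|^2)$ and $\widehat{\cI}_2$ is non-degenerate on $V$; hence $(c_i(a),u_i(a))\to(c_*,0)$, as claimed. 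The main obstacle is the reduction: one must carry out the Lyapunov--Schmidt reduction so that it is at once analytic in the scale $H^{\sigma,s}$ (via analyticity of the Dirichlet--Neumann operator), $O(2)$-equivariant, and compatible with the Hamiltonian structure \emph{and} with the conserved momentum, so that constrained critical points of the reduced pair $(\widehat{\cH},\widehat{\cI})$ genuinely lift to traveling waves of prescribed momentum whose Lagrange multiplier is the speed; checking that the sign hypothesis on $a$ is exactly what makes $\mathcal M_a$ compact belongs here as well. Once this structural reduction is in place, the variational count is comparatively soft: in contrast with case~($iii$) of Theorem~\ref{simpl}, here the level set is compact, so the minimum and the maximum of $\widehat{\cH}|_{\mathcal M_a}$ already provide the two required solutions, with no mountain-pass or linking construction needed.
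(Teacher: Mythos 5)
Your overall strategy is the right one and matches the paper in spirit: reduce by Lyapunov--Schmidt onto $V=\ker\cL_{c_*}\cong\R^4$, observe that the sign hypothesis $\sign(a)=-\sign(j_*)$ makes the constraint set a compact, sphere-like manifold (because on $V$ the momentum $\cI$ restricts to a definite quadratic form of sign $-\sign(j_*)$, cfr.\ \eqref{HIV}), and then take the minimum and the maximum of the reduced functional on this compact set, using $O(2)$-invariance to conclude that two distinct levels give two geometrically distinct orbits (and the degenerate case gives infinitely many). Your asymptotics $\widehat\cH_2=-c_*\widehat\cI_2$ and the discussion of $c_i(a)\to c_*$ are also in line with Lemmata \ref{lem:Gcu}, \ref{c}.

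However, there is a genuine gap precisely at the point you yourself flag as ``the main obstacle.'' You assert that the Lyapunov--Schmidt reduction delivers a pair of $c$-independent analytic functionals $\widehat\cH,\widehat\cI$ on $V$ and that the speed $c$ ``appears as the Lagrange multiplier'' for the constraint $\{\widehat\cI=a\}$. This does not follow: the range solution $w=w(c,v)$ depends on $c$, so the pushed-down Hamiltonian and momentum $\cH(v+w(c,v))$ and $\cI(v+w(c,v))$ do as well, and one cannot simply ``read off'' a constrained variational problem on $V$ with $c$ as multiplier. Worse, if one tries to use the $c$-derivative of the associated functional to fix $c$, the relevant second derivative degenerates as $v\to0$ (since $w(c,v)=O(\|v\|^2)$), so no direct implicit-function argument in $c$ works. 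The paper resolves this by a different and essential device: it first determines $c=c(v)$ by the \emph{radial} condition $(\di_v\Phi)(c(v),v)[v]=0$ (equation \eqref{eqdef}, Lemma~\ref{c}), solved by a contraction argument; only then are the $v$-only functionals $I(v):=\cI(v+w(c(v),v))$ and $\phi_a(v):=\Psi_a(c(v),v+w(c(v),v))$ defined (cfr.\ \eqref{sfereSa}, \eqref{defPsia}). Crucially, Lemma~\ref{lemma59} then shows that the Lagrange multiplier $\mu$ of the constrained critical point $\phi_a|_{\cS_a}$ \emph{vanishes} (not that $\mu=-c$), exactly because the radial derivative of $\Phi(c(v),\cdot)$ has been killed by the choice of $c(v)$ and $\di_vI(v)[v]\neq0$ on $\cS_a$ by \eqref{deradial}. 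Without Lemma~\ref{c} and the vanishing-multiplier verification, constrained critical points of your $\widehat\cH|_{\mathcal M_a}$ need not lift to solutions of $\cF(c,u)=0$, so the proof does not close. This step is the heart of the fixed-momentum argument and is what distinguishes it from the fixed-speed Theorem~\ref{simpl}; it needs to be carried out, not merely postulated.
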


In this case the proof is reduced to search for critical points 
of a reduced Hamiltonian 
on a sphere-like manifold 
(of functions with fixed momentum $ a $, cfr.  \eqref{sfereSa})
, in the spirit   
of  Weinstein-Moser resonant center theorems \cite{Mo,We1,We2}, see also \cite{Berti}, and    
Craig-Nicholls \cite{CN}. 

\begin{remark}
{\bf (Dependence of the Stokes waves 
with respect to the 
speed/momentum)}  Critical points 
of a functional may vary highly erratically with respect to arbitrarily small
perturbations of the functional. This is why, for  the resonant bifurcations
considered in Theorems \ref{simpl} and 
 \ref{simplCN}, 
the dependence of the Stokes waves $ u_i (c)$,  with respect to their speed $c $,  and of the Stokes waves $ u_i (a)$  (as well as their speed $ c_i (a) $)
with respect to their momentum   $ a $, may be rather irregular.
\end{remark}

The paper is organized as follows. 
In Section \ref{sec:linea} we characterize all the bifurcation speeds where $ \di_u \cF(c,0) $ is not invertible, i.e the set \eqref{bifspeed}.
In Section \ref{sec:dispe} 
we prove Proposition \ref{KER24} which establishes,
according to the values of $ g, \kappa, \gamma, \tth $, if the 
kernel of $ \di_u \cF(c_*,0) $ is 2 dimensional (non-resonant case)
or 4 dimensional (resonant case). 
In Section \ref{sec:LS} we perform the variational Lyapunov-Schmidt reduction.
In section \ref{sec:Reso}
we prove the bifurcation of 
Stokes waves in the resonant case 
either 
with fixed 
speed (Theorem \ref{simpl}) or with fixed momentum
(Theorem \ref{simplCN}). 
In Appendix \ref{sec:App} we prove the existence of Palais-Smale sequences
at the mountain pass level. 

\section{The linearized operator}\label{sec:linea}
We study the linearized operator
\begin{equation}\label{dFc0}
    \di_u \cF(c,0):= \cL_c := c \pa_x  + J \di_u \grad  \cH(0) 
\end{equation}
where, in view of  \eqref{Hamiltonian}, \eqref{wahlen},
\begin{equation}\label{illinea}
	 \di_u \grad \mathcal{H}(0)
  := W^\top \, \di_u \grad H(0) W 
  =\left(
			\begin{matrix}{}
				g -\kappa \pa_x^2-\frac{\gamma^2}{4}\Pi_0^\bot\partial_x^{-1}G(0)\partial_x^{-1}\Pi_0^\bot &   
                    -\frac{\gamma}{2}\Pi_0^\bot \partial_x^{-1}G(0)\\
				\frac{\gamma}{2}G(0)\partial_x^{-1}\Pi_0^\bot 		&  			G(0)
			\end{matrix}
		\right)
\end{equation}
and
$$ 
G(0) =  D \tanh (\tth D) \ \
\text{if} \    \tth < + \infty \, , \qquad 
G(0) = |D|  \ \   
\text{if} \ \tth = + \infty \, , 
$$ 
is the Dirichlet-Neumann operator at the flat surface. 
Note that  the real operator $ \di_u \grad \mathcal{H}(0)$ is 
 symmetric  and  that 
	\begin{equation}\label{dNablaI}
		c \pa_x = J \di_u\nabla \mathcal{I}(0) \ , 
  \quad 
\di_u\nabla \mathcal{I}(0)  =\left(
			\begin{matrix}{}
				0 &  	 -\partial_x\\
				\partial_x & 	 0
			\end{matrix}
		\right)  \ , 
	\end{equation}
 so the linear operator $\cL_c$ in \eqref{dFc0} is Hamiltonian, i.e. of the form $ J A $ where 
 $ A $ is a symmetric  operator.
 
We diagonalize $\cL_c$ only on the zero average functions. The action of $\cL_c$ on the constant vectors $\vect{1}{0}$, $\vect{0}{1}$ is trivial, see \eqref{Lc0inv} and Remark \ref{Lc on V0}.
In order to diagonalize  $\cL_c$ on the zero average functions, we conjugate it  with the  symplectic map
\begin{equation}\label{M}
		 \mathcal{M}:=\left(
			\begin{matrix}{}
				M		& 	  0\\
				0 		&   M^{-1}
			\end{matrix}
		\right)\quad,\quad
		 M:=\left(\frac{G(0)}{g -\kappa\pa_x^2-\frac{\gamma^2}{4}\partial_x^{-1}G(0)\partial_x^{-1}} 
			\right)^{\frac{1}{4}} \ , 
	\end{equation} 
 i.e. 
	$\mathcal{M}^\top J \mathcal{M}= J$, 
     obtaining the Hamiltonian operator
$$
\cL_c^{(1)}:= \cM^{-1} \cL_c \cM = c \pa_x  + J \left(\begin{matrix}{}
				\omega(D)			& 	-\frac{\gamma}{2} \partial_x^{-1}G(0)\\
				\frac{\gamma}{2}G(0)\partial_x^{-1}	&  	\omega(D)
			\end{matrix}
		\right)
$$
	where $\omega (D)$ is the Fourier multiplier with symbol, for any $ \xi \neq 0 $,  
	$$
		\omega(\xi ):=
  \begin{cases}
			\sqrt{\big(g+\kappa \xi^2)
   \xi \tanh(\tth \xi )    +\frac{\gamma^2}{4} \tanh^2 (\tth \xi ) }	\quad  \ \text{if} \ \tth < + \infty \\
   \sqrt{\big(g+\kappa \xi^2)
   |\xi|     +\frac{\gamma^2}{4} } \qquad \qquad \qquad \quad 
   \qquad   
   \text{if} \ \tth = + \infty \, .  
   \end{cases}
	$$
Next we pass to complex coordinates via the invertible transformation
	\begin{equation}\label{defC}
		\mathcal{C} :=\frac{1}{\sqrt2}\left(
			\begin{matrix}{}
				1	  & 	 1\\
				-\im    &  \im
			\end{matrix}
		\right) \, , 
  \quad\mathcal{C}^{-1}
  =\frac{1}{\sqrt2}\left(
			\begin{matrix}{}
				1	  & 	 \im  \\
				1    &  - \im
			\end{matrix}  
		\right) \, , 
	\end{equation}
 obtaining the complex Hamiltonian operator
 \begin{equation}\label{L2c}
     \cL_c^{(2)}:= 
   \cC^{-1}   \cL_c^{(1)} \cC
     =  
     c \pa_x + J_c \boldsymbol{\Omega} 
     \quad \text{where} \quad 
      J_c:= \begin{pmatrix}
- \im & 0 \\
0 & \im 
 \end{pmatrix}
 \end{equation}
is the complex Poisson tensor, 
and  $\boldsymbol{\Omega}$ is the 
selfadjoint operator
 \begin{equation}\label{boldomega}
		\boldsymbol{\Omega}:= 	\begin{pmatrix}
				\Omega(D)	&	0\\
				0	&	\overline{\Omega(D)}
    \end{pmatrix}
        \quad \text{where}
        \quad \Omega(D):=  \omega(D)+\im \frac{\gamma}{2}G(0)\partial_x^{-1}
    \end{equation}
is the Fourier multiplier 
  with real symbol 
 $\Omega_\xi  :=
  \Omega_\xi (g, \tth, \kappa, \gamma) $ defined in \eqref{Omega}.
 In \eqref{boldomega} the operator 
$\overline{\Omega(D)} $ is the Fourier multiplier with symbol $ \Omega_{-\xi}$.

Note that if the surface tension $ \kappa > 0 $ then the dispersion relation $ \xi \mapsto \Omega_\xi $ in \eqref{Omega} 
is a symbol of order $3/ 2 $ with  asymptotic expansion
\begin{equation}\label{expOmega}
\Omega_\xi = \sqrt{\kappa}
| \xi |^{3/2} + a_0 (\xi) 
\end{equation}
where $a_0(\xi) $ is a Fourier multiplier of order $0 $. If $ \kappa = 0 $ then 
$ \Omega_\xi $ in \eqref{Omega} 
is a symbol of order $ 1/ 2 $. 

In the Fourier basis 
$ \big\{e^{ \im  j x}\big\}_{j \in \Z \setminus \{0\} } $, the operator $\cL_c^{(2)}$ in 
\eqref{L2c} acts as the diagonal operator  
\begin{equation}\label{cLj}
    \cL_c^{(2)} = \textup{diag}(\cL_j)_{j \in \Z \setminus \{0\}} \quad \text{where}\quad \cL_j:= \begin{pmatrix}
        \im (c j - \Omega_j) & 0 \\
        0 & \im (c j +\Omega_{- j}) 
    \end{pmatrix}   \, , \ 
    \forall  j \in \Z \setminus \{0\}  \, .  
\end{equation}
Recalling \eqref{dFc0}, the possible speeds of  bifurcation $c \in \R $ are those for which  $\cL_c^{(2)}$ has a nontrivial kernel, namely, in view of \eqref{cLj}, the set 
$ \mathsf C $ defined in \eqref{bifspeed}.
\\[1mm]
\noindent 
{\bf Decomposition of the phase space in symplectic  subspaces invariant under $ \cL_c $.} 
We decompose any function 
$ (\eta, \zeta)  $
of  the  space 
$ L^2 (\T) \times L^2 ( \T ) $
as
\begin{equation}\label{deco-real}
u (x) = \begin{pmatrix}
\eta (x) \\
\zeta (x) 
\end{pmatrix}  = \eta_0 
v_0^{(1)} 
+
\zeta_0 v_0^{(2)}
+ \sum_{j \in \Z\setminus\{0\}}  
\alpha_j v_j^{(1)}(x) + \beta_j v_j^{(2)}(x) 
\end{equation}
where, denoting $M_j  $ the real symbol of the Fourier multiplier in \eqref{M}, 
\begin{equation}\label{vv}
v_0^{(1)} := \vect{1}{0} \, ,\ v_0^{(2)} := \vect{0}{1} \, , 
\quad 
 v_j^{(1)}(x) := \vect{M_j \cos(jx)}{M_j^{-1}\sin(jx)} \, , \ 
  v_j^{(2)} (x) := \vect{-M_j \sin(jx)}{M_j^{-1}\cos(jx)} 
  \, , 
\end{equation}
for any  $ j \in \Z \setminus \{0\} $,  
and
\begin{equation}\label{coordinatev}
\alpha_j :=  \alpha_j (u) =  \cW (u,v_j^{(2)}) \, , \quad 
\beta_j :=  \beta_j (u) = - \cW (u,v_j^{(1)}) \, . 
\end{equation}
The 
$2$-dimensional 
real vector spaces
\begin{equation}\label{defVj}
  V_0 
:= \la v_0^{(1)}, v_0^{(2)} 
\ra   \, , 
\quad V_j := \left\la  v_j^{(1)} ,v_j^{(2) } \right\rangle \, ,  
\  j \in \Z \setminus \{ 0\} \, ,  
\end{equation}
are  invariant under 
the action of $ \cL_c $, 
\begin{equation}\label{cLVj}
\cL_c : V_j \to V_j \, , \quad \forall j \in \Z \, , \quad \forall c \in \R \, , 
\end{equation}
 as
\begin{equation} \label{Lc0inv}
\cL_c v_0^{(1)}
= 
- g v_0^{(2)} \, ,  
\end{equation}
and, for any $ j \neq 0 $, 
 \begin{equation}\label{cLincl}
 \begin{aligned}
&  \cL_c v_j^{(1)} = 
(c j - \Omega_j)  
v_j^{(2)} \, ,\quad  
\cL_c v_j^{(2)}  =  
- (c j - \Omega_j) v_j^{(1)}\, , \\
& 
\qquad \quad \ 
\pa_x v_j^{(1)} = j v_j^{(2)} \, ,  \ \quad 
\pa_x v_j^{(2)}= - j v_j^{(1)}  \, .
\end{aligned}
\end{equation}
The first identities in \eqref{cLincl} directly follow by \eqref{L2c} and
\eqref{cLj} since  
each subspace $V_j $ in \eqref{defVj} is, for any $ j \neq 0 $,  the image under 
the  map $\cM \cC $ defined in \eqref{M}, \eqref{defC} of the $ 1$-d complex vector subspace  
$ \vect{ z e^{\im j x}}{ \overline{z} e^{-\im j x}} $, $ z \in \C $,  namely 
\begin{equation} \label{VjMCzj}
v_j^{(1)}(x) =
\cM \cC  \frac{1}{\sqrt{2}}\vect{e^{\im j x}}{e^{-\im j x}} \ , \quad 
v_j^{(2)}(x)=\cM \cC \frac{1}{\sqrt{2}}\vect{\im e^{\im j x}}{-\im e^{-\im j x}} \, . 
\end{equation}

\begin{remark}\label{Lc on V0}
We have $ \cL_c v_0^{(2)} = 0 $
but the vector
$ v_0^{(2)} := \vect{0}{1} $ does not belong to
ker$(\cL_c^*)$ because 
$ v_0^{(2)} \notin X $ (the space $ X $ is introduced in   
\eqref{spaceX}) since the second component of $ v_0^{(2)} $ has non-zero average.
\end{remark}

Each subspace $ V_j $ is symplectic 
since the symplectic form $\cW $
in \eqref{W} restricted to $ V_j $ reads 
\begin{equation}\label{2formW}
 \cW_{|V_j} \equiv 
 \di \alpha_j \wedge \di \beta_j\,  \qquad \text{as} \qquad
	\mathcal{W}(v_j^{(1)},v_j^{(2)})
            = 1  \, , \quad \forall j \in \Z \, . 
\end{equation}
Furthermore  the subspaces $ V_j $ 
are pairwise 
symplectic orthogonal, namely 
\begin{equation*}     V_k
\bot^{\cW} V_j  \, ,  \qquad \forall  k\not=j \, ,
\end{equation*} 
but they are {\it not} all orthogonal, for example 
$ V_{j} $
and $ V_{-j}$. 

The spaces $V_j $ are invariant with respect 
the involution $\mathscr{S}$  in \eqref{sym}
and the translations $\tau_\theta$
in \eqref{sym tau}, namely  		
$ \mathscr{S} V_j \subset V_j $,
$ \tau_\theta V_j\subset V_j $.
Since $\mathscr{S} v_j^{(1)} = v_j^{(1)} $ and
$\mathscr{S} v_j^{(2)} = - v_j^{(2)} $, 
and
$$
\tau_\theta v_{j}^{(1)}
= \cos (j \theta)  v_{j}^{(1)} 
- \sin (j \theta)  v_{j}^{(2)} \, , \quad 
\tau_\theta v_{j}^{(2)}
= \sin (j \theta)  v_{j}^{(1)} 
+ \cos (j \theta)  v_{j}^{(2)} \, 
, 
$$
in the coordinates $(\alpha_j, \beta_j )$,  they read 
		\begin{equation}\label{SRtheta}
\mathscr{S}:\left(\begin{matrix}\alpha_j	\\	\beta_j\end{matrix}\right)\mapsto 
			\left(\begin{matrix}{}
						1	& 	0\\
						0	&	-1
					\end{matrix}\right)
			\left(\begin{matrix}\alpha_j	\\	\beta_j \end{matrix}\right)
			\, ,  \quad
   \tau_\theta:\left(\begin{matrix}\alpha_j	\\	\beta_j\end{matrix}\right)\mapsto 
   \underbrace{\left(\begin{matrix}{}
	\cos( j \theta)	&
                                        \sin(j \theta)\\
      - \sin( j \theta)	&	
                            \cos(j \theta)
					\end{matrix}\right) 
                }_{=: R(- j\theta)}
			\left(\begin{matrix}\alpha_j	\\	\beta_j\end{matrix}\right) 
		\end{equation}
where $ R(\alpha) $ is the clock-wise 
rotation matrix of angle 
$ \alpha $. 

\begin{remark}\label{O2action}
The action of the semi-direct product group $ {\mathbb S}^1 \rtimes \mathbb{Z}_2$ on
$ V_j $ defined  by 
$ (\tau_\theta)_{\theta \in {\mathbb S}^1} $  and $ \{ I, \mathscr{S} \}$
in \eqref{SRtheta} amounts to the action of the orthogonal group $ O(2) $,
because each $ 2 \times 2 $ orthogonal matrix can be written as the composition of
a rotation matrix and a reflection.
\end{remark}

    Writing $u $ as in \eqref{deco-real} the quadratic part of the Hamiltonian $\mathcal{H}$ in \eqref{EqWWZakharovHamNewCoordinates} is equal to 		\begin{equation}\label{HamiltonianInCoord}
            \cH_2 (u) := \frac{1}{2} \langle \di_u \nabla\mathcal{H}(0)u,  u\rangle = \frac12 
     \sum_{j\not=0}\Omega_j (\alpha_j^2+\beta_j^2) +  \,\frac{ g \eta_0^2}2
    \end{equation}
     and the momentum 
     \begin{equation}\label{MomentumInCoord}
			\mathcal{I}(u) = 
			\frac{1}{2}\langle \di_u \nabla\mathcal{I}(0)u,  u\rangle = 
			-  \frac12 \sum_{j \not=0}j (\alpha_j^2+\beta_j^2) \, . 
    \end{equation}
{\bf The kernel of $\cL_c$.}
We now fix some  $j_* \in \Z\setminus \{0\}$ and 
consider the bifurcation speed 
$$ 
c_*= \frac{\Omega_{j_*}}{j_*} \in \mathsf C \, , 
$$
cfr. \eqref{def:c*}, \eqref{bifspeed}.
Clearly, in view \eqref{cLincl}, 
the subspace 
$ V_{j_* } $ in \eqref{defVj} 
is included in the Kernel
of $ \cL_{c_*} $. It remains to
understand if there are other 
eigenvalues $ \cL_{c_*} $ equal to zero.
In view of the previous analysis 
\begin{equation}\label{kernelL}
\ker(\cL_{c_*}) = \bigoplus_{j \in \cV} V_j \, ,
\quad 
\cV := 
\big\{ j \in \Z \setminus \{0\}\, , \
   {\Omega_j = c_* j} \big\} \, , 
\end{equation}
and the problem is reduced 
to determine the existence of integers $ j $ that solve the equation  
\eqref{omegainter}. 
Since $\Omega_j>0$ for any $j\in \mathbb{Z}\setminus\{0\}$   the integers 
$j_*$ and $j$ satisfying \eqref{omegainter}
have the same sign
\begin{equation}\label{segni}
    \sign(j)=\sign(j_*) \, .
\end{equation}
If there are no other integer $ j \neq j_*  $ satisfying \eqref{omegainter} 
then the kernel of $ \cL_{c_*} $ is  
		 $2$ dimensional (non-resonant case). 
   Otherwise, as we prove in the next section, 
   the kernel  of  
$    \cL_{c_*} $ is  $4$ dimensional (resonant-case).

\section{The dispersion relation}\label{sec:dispe}

In this  section we prove Proposition \ref{KER24}. 
We study the graph of the function 
$ f : \R \setminus \{0\} \to \R $, $ \xi \mapsto 
f(\xi) := f(\xi;g,\tth,\kappa,\gamma) $ defined,
if $ \tth < + \infty $, by 
    \begin{equation}\label{fxdef}
           f(\xi ) :=  \frac{\gamma}{2}\frac{\tanh(\tth \xi )}{\xi}+ 
                    \text{sign}(\xi)\sqrt{\Big(g+ \kappa \xi^2+\frac{\gamma^2}{4} \frac{\tanh(\tth \xi)}{\xi}\Big) \frac{\tanh(\tth \xi)}{\xi}} 
    \end{equation}
    and, in infinite depth  $\tth = + \infty $, by \begin{equation}\label{fxdefinf}
        f(\xi) :=  \frac{\gamma}{2|\xi|}+ 
                \text{sign}(\xi)\sqrt{\Big(g+ \kappa \xi^2+\frac{\gamma^2}{4|\xi|} \Big) \frac{1}{|\xi|}} \, , 
    \end{equation}
that, for any 
$j \in \Z \setminus \{0\} $,
is equal to the dispersion relation $f( j):=\frac{\Omega_{j}}{ j}$,
cfr. \eqref{Omega}.

Note  that 
$ f(\xi;g,\tth,\kappa,\gamma)=-f(-\xi;g,\tth,\kappa,-\gamma) $ 
and so it is sufficient to study the function $f(\:\cdot\:;g,\tth,\kappa,\gamma)$ for $\gamma\geq0$.

\begin{proposition}\label{paravari}
For any  $ g \in
(0,+\infty)$, $\kappa\in[0,+\infty)$,
$ \tth \in (0,+\infty] $,
$ \gamma \in [0,+\infty) $,
the  functions in \eqref{fxdef}-\eqref{fxdefinf} satisfy 
\begin{equation}\label{fpm}
f(\xi) > 0 \, , \ \forall  \xi > 0 \,  \qquad \text{and} \qquad  
f(\xi) < 0  \, , \ \forall \xi < 0 \, .
\end{equation} 
According to the values of the parameters the following properties hold:
\\[1mm]
$(1)$ \textsc{Finite depth case} $\tth< + \infty $.
\begin{itemize}
    \item[(1a)] \underline{$ \kappa > 0 $.}
Then the function $f$ fulfills
\begin{align}\label{fpmK1}
&  \lim_{\xi \to 0^\pm} f(\xi)=\frac{\gamma}{2}\tth\pm\sqrt{\big(g+\frac{\gamma^2}{4}\tth\big)\tth} \ , \qquad
\lim\limits_{\xi \to  \pm\infty}f(\xi) = \pm \infty \, , 
\quad \lim_{\xi \to 0^\pm} f'(\xi) = 0 \, , \\
& \label{fpmK2}
\lim_{\xi \to 0^\pm}
        f''(\xi) 
        = \pm \alpha \Big( B_\pm - \frac13 \Big) \quad \text{where} \quad 
        \alpha := 
        \frac{2 g \tth^3}{\sqrt{\big(4g+\gamma^2\tth\big) \tth}}
\end{align}
and $ B_\pm $ are the vorticity-modified Bond numbers in \eqref{def:bond}. 
Moreover
\begin{equation}\label{casipm}
\begin{aligned}
&	\lim_{\xi \to 0^+}f''(\xi;g,\tth,\kappa,\gamma)\geq0\iff f_{\vert(0,+\infty)}\text{ is  strictly increasing} \, ,\\
& 	\lim_{\xi \to 0^+}f''(\xi;g,\tth,\kappa,\gamma)< 0\iff f_{\vert(0,+\infty)}\text{  has a unique local minimum} \, , \\
	& \lim_{\xi \to 0^-}f''(\xi;g,\tth,\kappa,\gamma)\leq0\iff f_{\vert (-\infty,0)}\text{ is  strictly increasing}\, ,\\
 &	\lim_{\xi \to 0^-}f''(\xi;g,\tth,\kappa,\gamma)>0\iff f_{\vert (-\infty,0)}\text{ has a unique local maximum} \, . 
\end{aligned}
\end{equation}
\item[(1b)] \underline{$ \kappa = 0 $.} The functions
$f_{\vert(0,+\infty)}$ and $f_{\vert(-\infty,0)}$ are strictly decreasing and
\begin{equation}\label{limitaltro}
\lim_{\xi\to 0^\pm} f(\xi) =\frac{\gamma}{2}\tth\pm\sqrt{\big(g+\frac{\gamma^2}{4}\tth\big)\tth} \ , 
  \quad 
    \lim_{\xi\to\pm \infty}f(\xi)=0^\pm \ , 
    \quad 
    \lim_{\xi \to 0^\pm} f'(\xi) = 0 \ . 
\end{equation}
\end{itemize}
$(2)$ \textsc{Deep water case} $\tth = + \infty $.
\begin{itemize}
    \item[(2a)]  \underline{ $ \kappa > 0 $.}  Then  $f_{\vert(0,+\infty)}$ has a unique local minimum, $f_{\vert (-\infty,0)}$ has a unique local maximum and
\begin{equation}\label{fpm1:infdepth}
\begin{aligned}
    \lim\limits_{\xi \to 0^+} f(\xi)  = +\infty & \ , \quad 
    \lim\limits_{\xi \to 0^-} f(\xi)  = 
    \left\{\begin{array}{@{}l@{}l}
    -\frac{g}{\gamma} &\quad\text{if }\gamma>0\\
    -\infty           &\quad\text{if }\gamma=0
    \, , 
    \end{array}\right. \\
    \lim\limits_{\xi \to \pm \infty} f(\xi)  = \pm\infty \,, & 
    \quad \lim\limits_{\xi \to 0^-} f'(\xi)  = \left\{\begin{array}{@{}l@{}l}
    -\frac{g^2}{\gamma^3} &\quad\text{if }\gamma>0\\
    -\infty               &\quad\text{if }\gamma=0 \, .  
    \end{array}\right. 
\end{aligned}
\end{equation}
 \item[(2b)] \underline{ $ \kappa = 0 $.}   
The functions $f_{\vert(0,+\infty)}$ and $f_{\vert(-\infty,0)}$ are strictly decreasing and
\begin{equation}\label{fpm2:infdepth}
\begin{aligned}
    \lim\limits_{\xi \to 0^+} f(\xi)  = +\infty & \ , \quad 
    \lim\limits_{\xi \to 0^-} f(\xi)  = 
    \left\{\begin{array}{@{}l@{}l}
    -\frac{g}{\gamma} &\quad\text{if }\gamma>0\\
    -\infty           &\quad\text{if }\gamma=0 \, , 
    \end{array}\right. \\
    \lim\limits_{\xi \to \pm \infty} f(\xi)  = 0^\pm \ & ,
    \quad \lim\limits_{\xi \to 0^-} f'(\xi)  = \left\{\begin{array}{@{}l@{}l}
    -\frac{g^2}{\gamma^3} &\quad\text{if }\gamma>0\\
    -\infty               &\quad\text{if }\gamma=0 \, . 
    \end{array}\right. 
\end{aligned}
\end{equation}
\end{itemize}
\end{proposition}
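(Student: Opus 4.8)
Throughout we take $\gamma\ge 0$, as in the statement (the general case reduces to this by the symmetry $f(\xi;g,\tth,\kappa,\gamma)=-f(-\xi;g,\tth,\kappa,-\gamma)$ noted above). The plan is to reduce everything to a single scalar inequality by reformulating the dispersion relation algebraically. Since by \eqref{Omega} the quantity $\Omega_\xi-\tfrac\gamma2\tanh(\tth\xi)$ is exactly the nonnegative square root appearing there, squaring and dividing by $\xi^2$ gives, for every $\xi\ne 0$ with $\tth<\infty$, the identity
\[
\xi\, f(\xi)^2=\tanh(\tth\xi)\bigl(\gamma f(\xi)+g+\kappa\xi^2\bigr),
\]
with $\tanh(\tth\xi)$ replaced by $\sign\xi$ if $\tth=+\infty$. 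Hence $f$ is the branch $w=f(\xi)$ of $F(\xi,w):=\xi w^2-\tanh(\tth\xi)(\gamma w+g+\kappa\xi^2)=0$ along which
\[
\partial_w F(\xi,f)=2\xi f-\gamma\tanh(\tth\xi)=2\bigl(\Omega_\xi-\tfrac\gamma2\tanh(\tth\xi)\bigr)>0 ,
\]
the positivity being clear because the radicand in \eqref{Omega} is strictly positive for $\xi\ne0$ (one uses $g>0$ and $\xi\tanh(\tth\xi)>0$). The same estimate shows $\Omega_\xi>0$ for all $\xi\ne0$, so $\sign f(\xi)=\sign\xi$, which is \eqref{fpm}. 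Moreover $f$ is real-analytic on each half-line, and its two natural even extensions $\tfrac\gamma2\tanh(\tth\xi)/\xi\pm\sqrt{\,\cdot\,}$ are real-analytic across $\xi=0$ (the radicand tending to $(g+\tfrac{\gamma^2}4\tth)\tth>0$), which already forces $\lim_{\xi\to0^\pm}f'(\xi)=0$.

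I would then compute the remaining boundary data. At $\xi=0$ one Taylor-expands the even analytic function $\xi\mapsto\tanh(\tth\xi)/\xi$ (value $\tth$) and the square root; this yields the limits in \eqref{fpmK1}, \eqref{limitaltro}, \eqref{fpm1:infdepth}, \eqref{fpm2:infdepth} (the last two read off directly from the explicit infinite-depth formula). Identifying the second-order coefficient with $\pm\alpha(\tB_\pm-\tfrac13)$ hinges on the elementary identity
\[
\tfrac{\tth^4\gamma^2}{6}\sqrt{1+\tfrac{4g}{\tth\gamma^2}}=\tfrac{\tth^3\gamma}{3}\sqrt{\bigl(g+\tfrac{\gamma^2\tth}{4}\bigr)\tth}\, ,
\]
which is precisely what converts the square root in the vorticity-Bond numbers \eqref{def:bond} into the $\mp\tfrac{\gamma\tth^3}{3}$ correction present in the naive Taylor coefficient, giving \eqref{fpmK2}. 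The limits at $\pm\infty$ are elementary, using $\tanh(\tth\xi)\to\pm1$ and the dominance of $\kappa\xi^2$ when $\kappa>0$.

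The heart of the proof is the monotonicity dichotomy. Implicit differentiation gives $f'=-\partial_\xi F(\xi,f)/\partial_w F(\xi,f)$, so by the first paragraph $\sign f'(\xi)=-\sign\partial_\xi F(\xi,f)$; substituting $\gamma f+g+\kappa\xi^2=\xi f^2/\tanh(\tth\xi)$ collapses $\partial_\xi F$ to
\[
\partial_\xi F(\xi,f)=f^2\,c(\tth\xi)-2\kappa\xi\tanh(\tth\xi),\qquad c(t):=1-\tfrac{2t}{\sinh 2t}\in(0,1),
\]
and one further use of the identity shows, when $\kappa>0$ and $\gamma>0$, that $f'(\xi)>0\iff f(\xi)<\Phi(\xi)$ with the \emph{explicit} comparison function $\Phi(\xi):=\tfrac1\gamma\bigl(\kappa\xi^2 m(\tth\xi)-g\bigr)$, $m(t):=\tfrac{\sinh 2t+2t}{\sinh 2t-2t}$. (When $\gamma=0$ the condition becomes $g<\kappa\xi^2 m(\tth\xi)$, with no $f$ present; when $\kappa=0$ one simply has $\partial_\xi F(\xi,f)=f^2 c(\tth\xi)>0$, so $f$ is strictly decreasing on each half-line, which disposes of cases (1b) and (2b); and in infinite depth with $\kappa>0$ the comparison is with the trivially monotone parabola $\Phi_\infty(\xi)=(\kappa\xi^2-g)/\gamma$.) The decisive input is that $\Phi$ is strictly increasing on $(0,\infty)$, equivalently $2m(t)+tm'(t)>0$ for $t>0$, which after clearing denominators becomes the single inequality $\sinh^2 t+t\tanh t>2t^2$ for $t>0$. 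Granting it, a transversality argument closes everything: at any critical point $\xi_0$ of $f$ one has $f(\xi_0)=\Phi(\xi_0)$ and $(f-\Phi)'(\xi_0)=-\Phi'(\xi_0)<0$, so $f-\Phi$ has at most one zero on $(0,\infty)$ and $f$ at most one critical point, necessarily a strict local minimum; comparing this with $\sign f''(0^+)=\sign(\tB_+-\tfrac13)$ and with the endpoint behaviour ($f(0^+)$ finite, $f\to+\infty$ at $+\infty$) gives the precise alternative of \eqref{casipm} for $f|_{(0,\infty)}$. The mirror argument on $(-\infty,0)$, where the relevant comparison function $\Phi(|\xi|)$ is decreasing in $\xi$ so that a critical point there is a local \emph{maximum}, gives the remaining lines of \eqref{casipm}; and case (2a) follows identically with $\Phi_\infty$, yielding the unique minimum on $(0,\infty)$ and maximum on $(-\infty,0)$ asserted in \eqref{fpm1:infdepth}.

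The step I expect to be the main obstacle is the scalar inequality $\sinh^2 t+t\tanh t>2t^2$ for $t>0$: it is sharp to sixth order at $t=0$, so naive truncations of the series of $\cosh$ and $\tanh$ cancel and a genuine argument is needed. I would multiply through by $8\cosh^2 t$ and expand: $8\cosh^2 t\,(\sinh^2 t+t\tanh t-2t^2)=(\cosh 4t-1)+4t\sinh 2t-8t^2-8t^2\cosh 2t$, whose coefficient of $t^{2n}$ vanishes for $n\le 2$ and equals $\tfrac{16^n}{(2n)!}-\tfrac{4(n-1)4^n}{(2n-1)!}$ for $n\ge 3$, which is positive there since the ratio of its two terms is $\tfrac{4^{n-1}}{2n(n-1)}$, a quantity $>1$ and increasing for $n\ge 3$. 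The only other delicate points are the borderline cases $\tB_\pm=1/3$ (and, for $\tth=+\infty$ and $\gamma>0$, the right endpoint $\xi=0^-$): there $(f-\Phi)(0^\pm)=0$ while $(f-\Phi)''(0^\pm)$ has the sign that forces $f$ to be monotone on that half-line, so one must push the expansion one order further; everything else is careful sign-bookkeeping across the four parameter regimes.
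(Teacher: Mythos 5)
Your proposal is correct and takes a genuinely different route from the paper's. The paper proves the key Lemma \ref{CriticalValuesAreLocalMinima} by passing to the reciprocal $g_\pm=1/f_\pm$, differentiating the resulting quadratic $(g+\kappa\xi^2)g_\pm^2+\gamma g_\pm=\xi/\tanh(\tth\xi)$ twice, and reading off the sign of $g_\pm''$ at a critical point; it works out the deep-water case in full and delegates finite depth to~\cite{M}. You instead differentiate the defining relation $F(\xi,f)=\xi f^2-\tanh(\tth\xi)(\gamma f+g+\kappa\xi^2)=0$ implicitly and distill the condition $f'>0$ into the explicit comparison $f<\Phi$ with $\Phi(\xi)=\gamma^{-1}(\kappa\xi^2 m(\tth\xi)-g)$, $m(t)=(\sinh 2t+2t)/(\sinh 2t-2t)$, so that a transversality argument (every zero of $f-\Phi$ has slope $-\Phi'<0$) yields at most one critical point, necessarily a minimum, and moreover tells you \emph{whether} a critical point exists from the sign of $f-\Phi$ at the endpoint $\xi=0^+$. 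The substantive overlap is that both approaches ultimately rest on the same scalar inequality $\sinh^2 t+t\tanh t>2t^2$: you prove it by the $8\cosh^2 t$ trick and coefficient estimates (your computation is correct), while the paper gets it implicitly by citing~\cite{M}; in deep water it degenerates to a triviality in both treatments. What your route buys is self-containedness and a direct geometric characterization of the monotonicity dichotomy; what it costs is the extra scalar inequality lemma. One small gap you flag but do not fill: in the borderline case $B_\pm=\tfrac13$ you need the coincidence $f(0^\pm)=\Phi(0^\pm)$, which turns out to hold exactly when $B_+=\tfrac13$ (a short algebraic check using $\sqrt{(g+\tfrac{\gamma^2\tth}{4})\tth}=\tfrac{\gamma\tth}{2}\sqrt{1+\tfrac{4g}{\tth\gamma^2}}$); once this is verified, comparing $f''(0^+)=0$ with $\Phi''(0^+)=\tfrac{4\kappa}{5\gamma}>0$ gives $f<\Phi$ near $0^+$ and hence strict monotonicity. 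The paper sidesteps this by a continuity argument in $\kappa$, which is an alternative you could also adopt.
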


\begin{figure}[h]\label{fig:dis}
    \begin{center}
    \includegraphics[ width =0.3\textwidth ]{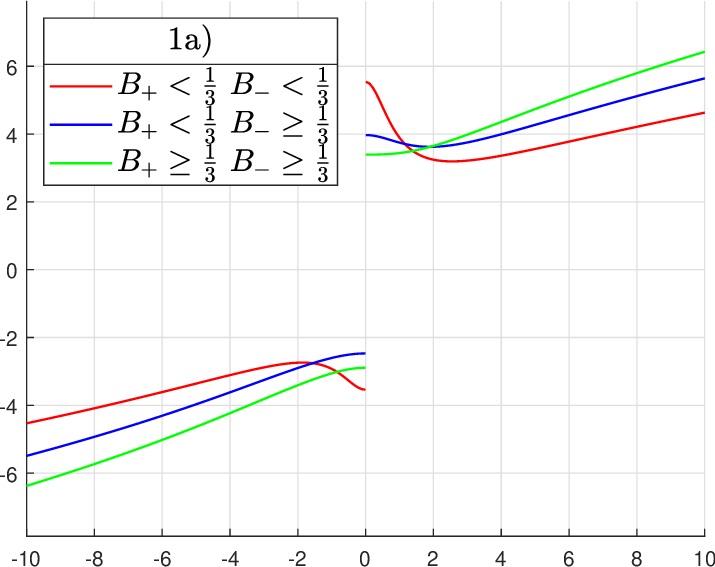}
    \hspace{0.05\textwidth}
    \includegraphics[ width =0.3\textwidth ]{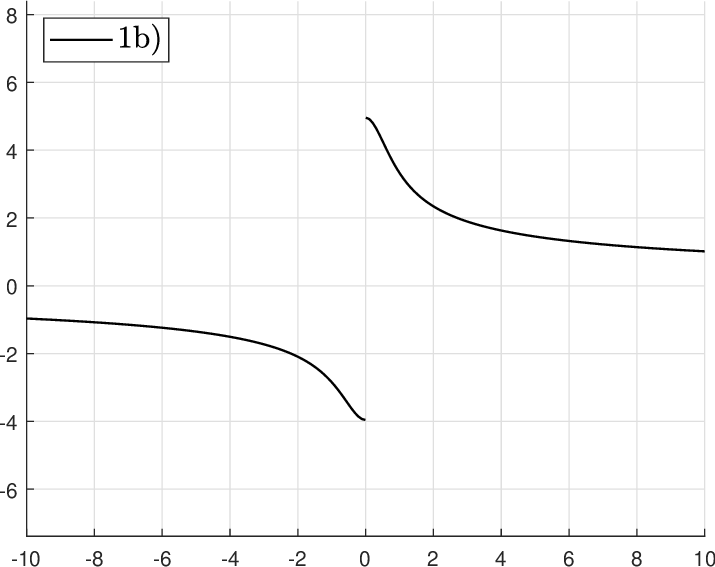}
    \vspace{0.05\textwidth}\\
    \includegraphics[ width =0.3\textwidth ]{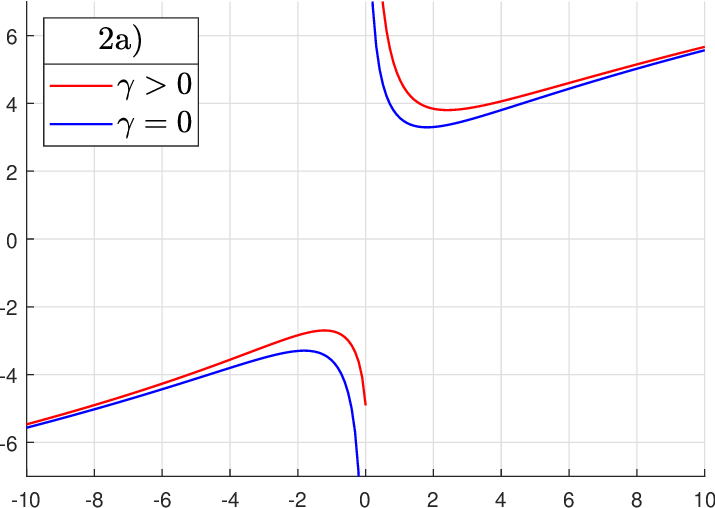}
    \hspace{0.05\textwidth}
    \includegraphics[ width =0.3\textwidth ]{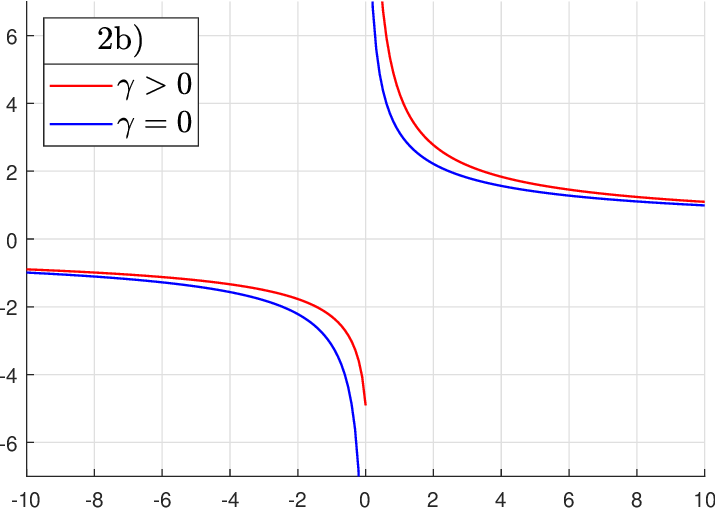}
    \end{center}
    \caption{Graphs of $ f(\xi) $ according to different values of
    $(g,\tth, \kappa, \gamma) $.
    The function $ f $ 
    has a minimum in $ (0,+\infty)$ iff $ B_+ < 1/3 $ or in case (2a), and have a maximum in $ (-\infty,0)$ iff $B_-<1/3$ or in case (2a).}
    \label{figure:Bond} 
\end{figure}

Proposition \ref{KER24} is a direct consequence of  Proposition \ref{paravari}.

\begin{proof}[Proof of Proposition \ref{KER24}]
In case (1a), by \eqref{fpm}, \eqref{fpmK1}, \eqref{fpmK2}, \eqref{casipm}
the graph of the function $ f (\xi) $ 
has the forms described in 
Figure \ref{figure:Bond}a) according to the values of $ B_\pm $. 
In particular $ f(\xi) $ has a unique absolute minimum for $ \xi > 0 $ if and only if 
 $ B_+ < 1/ 3 $ and a unique absolute maximum for $ \xi < 0 $ if and only if 
 $ B_- < 1/ 3 $.
 Note that if $\gamma\geq 0$ then $B_+ \leq B_- $, cfr.
    \eqref{def:bond}, and so the case $B_+ \geq 1/ 3 $ and $ B_- < 1/ 3 $ never happens.
 In the other cases 
 the graphs of the function $ f (\xi) $ have the form in Figure \ref{figure:Bond}b), 2a),2b). 
 The existence of integers $ j \neq  j_* $ such that \eqref{omegainter} is 
 possible   only  in cases 1a)-2a). 
For any $ 1 \leq j_* < j $ the function
$ f(j;g,\tth,\kappa,\gamma) - f(j_*; g,\tth,\kappa,\gamma) $ tends to $ + \infty $ as $ \kappa \to + \infty $
and tends to $ f(j;g,\tth,0,\gamma) - f(j_*; g,\tth,0,\gamma) < 0 $ as $ \kappa \to 0^+ $, which is negative  by cases 1b)-2b).
Proposition \ref{KER24} follows. 
\end{proof}

\begin{remark}
The vorticity-modified bond numbers in \eqref{def:bond} satisfy 
$ \lim_{\tth \to + \infty} B_+ = - \infty $  and  
$ B_- = \tfrac13 - \frac{g}{3 \gamma^2 \tth} + O( \tth^{-2}) $
as $ \tth \to + \infty $. Thus both $ B_+, B_- < \tfrac13  $
for sufficiently deep water and the graphs of cases 1a)-2a) look similar.  
Note also that 
$ -\frac{\tth\gamma^2}{6g}\big(1\pm 
\sqrt{ 1 + \frac{4g}{\tth \gamma^2}}
\ \big) < 1/ 3 $ for any $ g, \tth, \gamma $. 
\end{remark}

The rest of this section 
is dedicated to prove  Proposition \ref{paravari}.

Property \eqref{fpm} directly follows by \eqref{fxdef}-\eqref{fxdefinf}.
The function $ f(\xi) $ in \eqref{fxdef} admits an analytic extension at $ \xi = 0 $
and a Taylor expansion provides 
the limits of $ f(\xi )$  as $ \xi \to 0^\pm $ 
in \eqref{fpmK1}, \eqref{limitaltro} and  \eqref{fpmK2}.  
The limits of $ f(\xi )$ for $ \xi \to \pm \infty $ follow directly. 
 Also the limits  \eqref{fpm1:infdepth}-\eqref{fpm2:infdepth} of the function $ f(\xi) $
in  \eqref{fxdefinf} are directly verified.

In order to  prove \eqref{casipm}, the fact that in case (2a) the function $f_{\vert(0,+\infty)}(\xi) $
has a unique local minimum and that in cases (1b)-(2b) the function 
$f_{\vert(0,+\infty)}(\xi) $ is strictly decreasing (with the analogous properties for
$ f_{\vert(-\infty,0)}(\xi) $)  we first provide the following lemma, whose proof is related to \cite{M}.  
 We denote
\begin{equation}\label{fpiumeno}
    f_+(\xi):=f_{\vert(0,+\infty)}(\xi) \, , \quad 
    f_-(\xi):=f_{\vert(-\infty,0)}(-\xi) \, , 
    \quad \forall \xi >  0 \, . 
\end{equation}

    \begin{lemma}\label{CriticalValuesAreLocalMinima}
        If $\kappa=0$ the functions $f_\pm (\xi )$  have no critical points.  
        For any value of the parameters $(g,\tth,\kappa,\gamma)$ with $ \kappa > 0 $, 
        any critical point $ \bar \xi > 0 $ of $f_+ (\xi) $, resp. $f_- (\xi) $, is a strict local minimum, resp.  maximum, i.e. 
    \begin{equation}\label{der1der2}
        f'_\pm( \bar \xi)=0 \quad \implies \quad \pm f''_\pm( \bar \xi)>0 \, . 
    \end{equation} 
        The function 
        $f_+(\xi) $, resp. $f_-(\xi)$, 
        has at most one strict local minimum,  resp. maximum, in $[0, + \infty)$.
    \end{lemma}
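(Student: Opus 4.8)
The plan is to reduce everything to a single one-variable problem by squaring the radical in \eqref{fxdef}. Fix $\gamma\ge 0$ (allowed by the symmetry $f(\xi;g,\tth,\kappa,\gamma)=-f(-\xi;g,\tth,\kappa,-\gamma)$ noted before the lemma), and for $\xi>0$ set $t:=t(\xi):=\tfrac{\tanh(\tth\xi)}{\xi}>0$, so that $f_+(\xi)=\tfrac{\gamma}{2}t+\sqrt{(g+\kappa\xi^2+\tfrac{\gamma^2}{4}t)\,t}$. The quantity $h(\xi):=f_+(\xi)-\tfrac{\gamma}{2}t(\xi)>0$ satisfies $h(\xi)^2=(g+\kappa\xi^2)t+\tfrac{\gamma^2}{4}t^2$. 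Differentiating, $2h h'=(g+\kappa\xi^2)t'+2\kappa\xi t+\tfrac{\gamma^2}{2}tt'$, and since $h'=f_+'-\tfrac{\gamma}{2}t'$, a critical point $\bar\xi$ of $f_+$ (i.e. $f_+'(\bar\xi)=0$) is characterized by $-\gamma h(\bar\xi)t'(\bar\xi)=(g+\kappa\bar\xi^2)t'(\bar\xi)+2\kappa\bar\xi t(\bar\xi)+\tfrac{\gamma^2}{2}t(\bar\xi)t'(\bar\xi)$. The first basic fact I would record is that $t(\xi)=\tanh(\tth\xi)/\xi$ is a \emph{strictly decreasing} positive function on $(0,+\infty)$ with $t'(\xi)<0$ everywhere (this is elementary: write $t(\xi)=\tth\,\varphi(\tth\xi)$ with $\varphi(s)=\tanh s/s$ decreasing, and in the case $\tth=+\infty$ one has $t(\xi)=1/\xi$ which is even more transparent). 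Hence when $\kappa=0$ the critical point equation forces $-\gamma h t'=(g+\tfrac{\gamma^2}{4}\cdot 2 t)t'$ — wait, more carefully it reads $t'\big(g+\tfrac{\gamma^2}{2}t+\gamma h\big)=0$, and since $t'<0$ and the bracket is strictly positive (as $g>0$, $t>0$, $h>0$), there is \emph{no} critical point: this proves the first assertion of the lemma, and likewise for $f_-$.

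For $\kappa>0$ the heart of the matter is the second-derivative sign \eqref{der1der2}. Here the plan is the standard trick: work with $F(\xi):=h(\xi)^2=(g+\kappa\xi^2)t(\xi)+\tfrac{\gamma^2}{4}t(\xi)^2$. Since $h>0$, critical points of $f_+$ coincide with critical points of $h$, which coincide with critical points of $F$; and at such a point the sign of $h''$ equals the sign of $F''$, because $F=h^2$ gives $F''=2(h')^2+2hh''=2hh''$ at a critical point, with $h>0$. Finally, at a critical point $f_+''=h''+\tfrac{\gamma}{2}(-t'')$... — this last step needs care because $t''$ need not vanish, so I would instead argue directly on $f_+$: from $2hh'=F'$ I get $h'=F'/(2h)$, differentiate once more and evaluate at $\bar\xi$ where $h'(\bar\xi)=0$, i.e. $F'(\bar\xi)=0$, to obtain $h''(\bar\xi)=F''(\bar\xi)/(2h(\bar\xi))$; then $f_+''(\bar\xi)=h''(\bar\xi)-\tfrac{\gamma}{2}t''(\bar\xi)$. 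So I actually need both $F''(\bar\xi)>0$ \emph{and} control of $t''$. The cleaner route, which I would adopt, is to avoid $h$ altogether and compute $(\,f_+-\tfrac\gamma2 t\,)$ only formally, or better: use the substitution to a good independent variable. Let me reorganize: the classical move (this is the "proof related to \cite{M}") is to parametrize by $\tau:=t(\xi)$ itself. Because $t$ is a strictly decreasing diffeomorphism of $(0,+\infty)$ onto its image, I may write $\xi=\xi(\tau)$ and regard $f_+$ as a function $g_+(\tau)$; a critical point of $f_+$ corresponds to a critical point of $g_+$, and the sign of the second derivative is preserved (up to the positive factor $(\xi'(\tau))^2$). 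In these variables $g_+(\tau)=\tfrac{\gamma}{2}\tau+\sqrt{(g+\kappa\xi(\tau)^2+\tfrac{\gamma^2}{4}\tau)\tau}$, so everything reduces to understanding the single function $\tau\mapsto\xi(\tau)^2$ and its convexity/monotonicity properties. The key inequality to extract is that $\xi\mapsto\xi^2 t(\xi)=\xi\tanh(\tth\xi)$ has a specific monotonicity — indeed $\xi\tanh(\tth\xi)$ is strictly increasing and \emph{convex in a suitable sense}. I expect the decisive computation to be: after clearing the square root, the critical-point condition is a polynomial-type relation in $\kappa\xi^2$, $t$, $t'$, and one shows the resulting expression for $F''$ (or the analogue in $\tau$) is a sum of manifestly positive terms given $t'<0$, $t>0$, $g,\kappa>0$ and the convexity estimate $t''>0$ (which also holds: $\tanh s/s$ is convex on $(0,\infty)$, hence $t(\xi)=\tth\,\varphi(\tth\xi)$ has $t''>0$, and for $\tth=\infty$, $t=1/\xi$ trivially has $t''>0$).

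The uniqueness of the local minimum of $f_+$ (resp. maximum of $f_-$) on $[0,+\infty)$ then follows by a soft argument from \eqref{der1der2}: if $f_+$ had two distinct local minima $\bar\xi_1<\bar\xi_2$, then between them $f_+$ would have to attain a local maximum at some $\bar\xi_3\in(\bar\xi_1,\bar\xi_2)$, which is a critical point with $f_+''(\bar\xi_3)\le 0$, contradicting \eqref{der1der2} (at $\bar\xi_3$, if $f_+''(\bar\xi_3)=0$ one uses that \eqref{der1der2} says every critical point is a strict local min, so a local max is impossible). Combined with the continuity of $f_+$ and its boundary behaviour (limits at $0^+$ and $+\infty$ from Proposition \ref{paravari}'s already-verified limit statements), this pins down the shape.

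The main obstacle I anticipate is purely the \emph{algebraic bookkeeping} in verifying $F''(\bar\xi)>0$ (equivalently the sign in the $\tau$-variable): one must substitute the critical-point relation to eliminate one of $t,t',\xi^2$ and then recognize the remainder as positive, which requires the two auxiliary facts $t'<0$ and $t''>0$ together with $g,\kappa>0$. The infinite-depth case $\tth=+\infty$ is a clean special case of the same computation with $t=\xi^{-1}$, $t'=-\xi^{-2}$, $t''=2\xi^{-3}$, and should be checked separately but poses no new difficulty. No genuinely hard analysis is involved — the content is choosing the right substitution ($\tau=t(\xi)$, or equivalently squaring) so that the second-derivative sign becomes a sum of positive terms.
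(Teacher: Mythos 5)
Your $\kappa=0$ argument is sound, and your closing ``two local minima force a local maximum in between'' argument for uniqueness matches the paper's. The rest of the proposal, however, has a genuine gap precisely where the content of the lemma lies: the $\kappa>0$ implication \eqref{der1der2} is never actually established. You first propose to pass through $F:=h^2$ with $h:=f_+-\tfrac{\gamma}{2}t$, but, as you yourself flag, this collapses because for $\gamma>0$ the critical points of $f_+$ are \emph{not} critical points of $h$ or $F$ (since $f_+'=h'+\tfrac{\gamma}{2}t'$ and $t'<0$). You then switch to the reparametrization by $\tau:=t(\xi)$, which does lead somewhere --- carrying it out, the sign condition becomes $\kappa\big((\xi^2)''(\tau)\,\tau+2(\xi^2)'(\tau)\big)>0$, i.e.\ convexity of $\tau\mapsto \tau\,\xi(\tau)^2=\xi\tanh(\tth\xi)$ \emph{as a function of $\tau$} --- but you stop at ``I expect the decisive computation to be\ldots'' without verifying it. Worse, two of the ``elementary facts'' you list as inputs are false in finite depth: $\varphi(s)=\tanh(s)/s$ is \emph{not} convex near $0$ (a Taylor expansion gives $\varphi''(0)=-2/3<0$, so $t''<0$ for small $\xi$ when $\tth<\infty$), and $\xi\mapsto\xi\tanh(\tth\xi)$ is not convex in $\xi$ for large $\xi$ (its second derivative is $\tfrac{2\tth}{\cosh^2(\tth\xi)}\big(1-\tth\xi\tanh(\tth\xi)\big)$, which changes sign). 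Both facts are fine for $\tth=+\infty$, but since you never restrict to that case the plan as written cannot go through.

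The paper's argument is quite different and neatly bypasses all of this. It works with the \emph{reciprocals} $g_\pm:=1/f_\pm$, which solve the polynomial identity $(g+\kappa\xi^2)g_\pm^2+\gamma g_\pm=\xi$ in deep water (replace the right side by $\xi/\tanh(\tth\xi)$ in finite depth). Since $g_\pm'=-f_\pm'/f_\pm^2$, the critical points of $g_\pm$ and of $f_\pm$ coincide and the second-derivative signs correspond. Differentiating the identity twice, evaluating at a critical point $\bar\xi$ (where $g_\pm'(\bar\xi)=0$), and eliminating $\gamma$ via the identity itself gives
$\big((g+\kappa\bar\xi^2)g_\pm(\bar\xi)+\bar\xi/g_\pm(\bar\xi)\big)\,g_\pm''(\bar\xi)=-2\kappa\,g_\pm^2(\bar\xi)$,
and the signs of $g_\pm''$ --- hence of $f_\pm''$ --- read off directly from the sign of $g_\pm$. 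No information on $t''$ or any convexity of $\tau\xi^2$ is required, and the $\kappa=0$ statement drops out of the same computation at the first-derivative stage (one gets $0=1$). The paper carries this out in full only for $\tth=+\infty$ and refers to \cite{M} for finite depth; you would need the same care there. The short message: the reciprocal substitution $g_\pm=1/f_\pm$ is the missing ingredient that makes the $\kappa>0$ sign computation a two-line algebraic manipulation, instead of a convexity estimate on $\tanh$ that is not actually true.
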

    \begin{proof}
    We prove the deep water case $\tth=+\infty$. The case $\tth<+\infty$ is analogous and is 
    covered in \cite{M}. In view of \eqref{fxdefinf} we have, for any $ \xi > 0 $, 
\begin{equation}\label{deltaf}
            f_+(\xi)+f_-(\xi)=\frac{\gamma}{\xi}\, , \quad   
            f_+(\xi)f_-(\xi)=-\frac{g+\kappa \xi^2}{\xi} \, ,
        \end{equation}
        and thus $ f_\pm (\xi) $ solves the second order equation
        \begin{equation}\label{roots of polynomial}
             \xi f_\pm(\xi)^2- \gamma f_\pm(\xi)- (g+\kappa \xi^2) =0 \, ,  \quad \forall \xi > 0 
             \, . 
        \end{equation}
        The reciprocal functions 
       \begin{equation}\label{defgf}
            g_+(\xi):=\frac{1}{f_+(\xi)} > 0 \, ,\quad g_-(\xi):=\frac{1}{f_-(\xi)} < 0 
            \, , \quad \forall \xi > 0  \, ,  
        \end{equation}
        are well defined
         since $f_+(\xi) > 0$ and $f_-(\xi)<0$ for any $ \xi > 0 $ by \eqref{fpiumeno}, \eqref{fpm},       and   
        solve, in view of  \eqref{roots of polynomial},   
        \begin{equation}\label{roots of polynomial g}
            (g+\kappa \xi^2) g_\pm^2(\xi)+\gamma  g_\pm(\xi)=\xi \, , \quad \forall \,   \xi > 0 \, .
            \end{equation}
     The critical points of $g_\pm$ are  critical points of $f_\pm$ and viceversa.     
        Differentiating \eqref{roots of polynomial g} in $ \xi $ 
        we get, for any $\xi > 0 $, 
        \begin{equation}\label{D eq}
         2\kappa\xi g_\pm^2 (\xi) + 2(g+\kappa\xi^2)g_\pm(\xi)g_\pm'(\xi)+ \gamma g_\pm'(\xi)  = 1 \, .
        \end{equation}
        For $\kappa=0 $ the function $f_\pm (\xi)  $ has no critical points.
        Indeed, if $ f'_\pm (\overline{\xi} ) = 0  $ 
         then $ g'_\pm (\overline{\xi} ) = 0 $ and 
         by \eqref{D eq}  we get 
        $ 0 = 1 $.
        This contradiction proves the first statement of the lemma.
        
        Thus in the sequel we assume  $\kappa>0$.
        
        Differentiating  \eqref{D eq} at $ \xi=\overline{\xi}$ we get 
                   $  \big( 2(g+\kappa{\bar\xi}^2)g_\pm(\bar\xi)  + \gamma  \big)g_\pm''(\bar\xi)
                   =  - 2\kappa g_\pm^2(\bar\xi) $
               and using \eqref{roots of polynomial g}  
                we get
               \begin{equation}\label{profin}
                    \Big((g+\kappa\bar\xi^2)g_\pm(\bar \xi)+\frac{\bar \xi}{g_\pm(\bar\xi)}\Big)g''_\pm(\bar\xi)=-2\kappa g^2_\pm(\bar\xi)\, .
               \end{equation} 
        Since $ g_+ (\overline{\xi} ) > 0 $ (cfr. \eqref{defgf}) we deduce by
        \eqref{profin} that $ g_+''
                   (\overline{\xi}) <  0 $ 
                   and then
                   $ f_+ ''(\overline{\xi}) =-\frac{g_+ ''(\overline{\xi})}{g_+^2 (\overline{\xi})} > 0 $ and thus $ f_+ ''(\overline{\xi}) > 0 $. 
                   Similarly, since $ g_- (\overline{\xi} ) < 0 $ we deduce 
                   $ g_-''
                   (\overline{\xi}) >  0 $ and thus $ f_- ''(\overline{\xi}) < 0 $. 
               This proves
                \eqref{der1der2}.
        
               Let us prove the last claim of the lemma. 
                If $f_+ (\xi) $  has two distinct local minima $ 0 \leq \xi_1<\xi_2 $, then 
                $f_+ (\xi) $ has a  
                local maximum point in $(\xi_1,\xi_2)$. This is a contradiction with \eqref{der1der2}. 
                The claim for $ f_- (\xi) $
                follows similarly. 
\end{proof}

The next lemma 
allows to deduce
that a $ 4 $ dimensional kernel of $ {\cal L}_{c_*} $ 
may occur if and only if $ B_{\pm} < 1/3 $.

 \begin{lemma}
\eqref{casipm} holds. 
 \end{lemma}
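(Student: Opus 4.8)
The plan is to deduce \eqref{casipm} from Lemma~\ref{CriticalValuesAreLocalMinima} together with the one-sided data at the origin recorded in \eqref{fpmK1}--\eqref{fpmK2}, treating the two ``$+$'' equivalences (about $f_+:=f_{\vert(0,+\infty)}$) and the two ``$-$'' equivalences (about $f_-$, cf.\ \eqref{fpiumeno}) in parallel. Since in each pair of lines of \eqref{casipm} the two right-hand alternatives are mutually exclusive and jointly exhaustive, and the two left-hand conditions are complementary, it suffices to prove the \emph{forward} implications. The structural input is this: by Lemma~\ref{CriticalValuesAreLocalMinima} (here $\kappa>0$) the function $f_+$ has at most one critical point in $(0,+\infty)$ and any such point is a strict local minimum, while $\lim_{\xi\to+\infty}f(\xi)=+\infty$ from \eqref{fpmK1} excludes that $f_+$ be strictly decreasing; hence either $f_+$ is strictly increasing on $(0,+\infty)$, or $f_+$ has a unique critical point $\bar\xi$, which is then its global minimum, with $f_+$ strictly decreasing on $(0,\bar\xi)$ and strictly increasing on $(\bar\xi,+\infty)$. (The same dichotomy holds for $f_-$ with ``minimum'' replaced by ``maximum'', using $\lim_{\xi\to-\infty}f(\xi)=-\infty$, and the correspondence ``$f_{\vert(-\infty,0)}$ strictly increasing / has a unique local maximum'' $\Leftrightarrow$ ``$f_-$ strictly decreasing / has a unique local maximum''.) Which alternative occurs is read off near $0^+$: by \eqref{fpmK1}--\eqref{fpmK2}, $f_+'(0^+)=0$ and $\lim_{\xi\to0^+}f''(\xi)=\alpha(\tB_+-\tfrac13)$ with $\alpha>0$, so $f_+'(\xi)=\alpha(\tB_+-\tfrac13)\,\xi+o(\xi)$ as $\xi\to0^+$. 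If $\tB_+>\tfrac13$, then $f_+$ increases near $0^+$, hence is strictly increasing on $(0,+\infty)$ (a global minimum being impossible, as it would force $f_+'<0$ just to the right of $0$ as well); if $\tB_+<\tfrac13$, then $f_+$ decreases near $0^+$, hence it is not strictly increasing and so has a unique local minimum. The ``$-$'' equivalences follow identically, using that $f_-(\xi)=f(-\xi)$ gives $f_-''(0^+)=\lim_{\xi\to0^-}f''(\xi)=-\alpha(\tB_--\tfrac13)$.

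This covers all cases except the borderline $\tB_+=\tfrac13$ (resp.\ $\tB_-=\tfrac13$), where $\lim_{\xi\to0^\pm}f''=0$ and the claim is strict monotonicity; there I would compute one further Taylor coefficient at $\xi=0$. From \eqref{fxdef}, $f_+$ and $f_-$ are the two roots $z$ of $z^2-\gamma T(\xi)\,z-(g+\kappa\xi^2)T(\xi)=0$ with $T(\xi):=\xi^{-1}\tanh(\tth\xi)$ even and analytic, $T(0)=\tth$; as the discriminant is positive at $\xi=0$, both roots are even and analytic near $0$, so $f_+(\xi)=L+f_2\xi^2+f_4\xi^4+\dots$ with $L:=f_+(0)>0$. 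Substituting into $f_+^2=T\,(\gamma f_+ +g+\kappa\xi^2)$ and matching the coefficient of $\xi^2$ gives $f_2=\tfrac12\lim_{\xi\to0^+}f''$ together with the identity $f_2\,(2L-\gamma\tth)=\tth(\kappa-\tth L^2/3)$; hence in the borderline case $f_2=0$ and $\kappa=\tth L^2/3$. Matching the coefficient of $\xi^4$ (and using $f_2=0$) then gives $f_4\,(2L-\gamma\tth)=T_2\kappa+T_4L^2/\tth$ with $T_2=-\tth^3/3$, $T_4=2\tth^5/15$, which upon substituting $\kappa=\tth L^2/3$ equals $\tth^4L^2/45>0$. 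Since $2L-\gamma\tth=\sqrt{(4g+\gamma^2\tth)\tth}>0$, we get $f_4>0$, hence $f_+(\xi)=L+f_4\xi^4+\dots$ is strictly increasing near $0^+$, hence on all of $(0,+\infty)$. The borderline ``$-$'' case is the mirror image with $L':=f_-(0)<0$ in place of $L$: now $2L'-\gamma\tth=-\sqrt{(4g+\gamma^2\tth)\tth}<0$, so the same identities yield $f_4'<0$ and $f_-(\xi)=L'+f_4'\xi^4+\dots$ is strictly decreasing near $0^+$ and hence on $(0,+\infty)$, i.e.\ $f_{\vert(-\infty,0)}$ is strictly increasing. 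This completes \eqref{casipm}.

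The only delicate step is this borderline Taylor computation; everything else follows at once from Lemma~\ref{CriticalValuesAreLocalMinima} and the values of $f,f',f''$ at $0^\pm$ recorded in \eqref{fpmK1}--\eqref{fpmK2}.
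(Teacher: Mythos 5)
Your proof is correct, and the main frame---read off the sign of $\lim_{\xi\to 0^+}f''$ and feed it into the dichotomy supplied by Lemma~\ref{CriticalValuesAreLocalMinima} together with the divergence $f(\xi)\to\pm\infty$ as $\xi\to\pm\infty$---coincides with the paper's. The two arguments differ only at the degenerate threshold $\lim_{\xi\to 0^+}f''=0$, i.e.\ $\tB_\pm=\tfrac13$, which is the genuinely delicate point. You handle it by an explicit fourth-order Taylor computation: using that $f_\pm$ is an even analytic function near $0$ solving $z^2-\gamma T z-(g+\kappa\xi^2)T=0$ with $T(\xi)=\tanh(\tth\xi)/\xi$, you show $f_2=0$ forces $\kappa=\tth L^2/3$, and then $f_4(2L-\gamma\tth)=\tth^4L^2/45>0$, so $f_4>0$ (resp.\ $f_4'<0$ for $f_-$, since $2L'-\gamma\tth<0$) and local strict monotonicity of the correct sign follows; I checked the coefficients $T_2=-\tth^3/3$, $T_4=2\tth^5/15$ and the algebra and it all works out. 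The paper instead uses a soft perturbation-and-limit argument: raise $\kappa$ to $\kappa+\varepsilon$ so that $\lim f''(0^+)>0$ and $f_+(\cdot;\kappa+\varepsilon)$ is increasing by the non-degenerate case, then pass to the limit $\varepsilon\to 0^+$ to get that $f_+(\cdot;\kappa)$ is nondecreasing, which Lemma~\ref{CriticalValuesAreLocalMinima} upgrades to strictly increasing. Your route is more explicit and additionally identifies the leading degenerate behaviour $f_+(\xi)=L+f_4\xi^4+O(\xi^6)$ at the threshold; the paper's route avoids the $\xi^4$ bookkeeping and is shorter. Both are valid.
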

\begin{proof}
    We prove the first equivalence in \eqref{casipm}.
    The other ones are analogous. 
    Lemma \ref{CriticalValuesAreLocalMinima} implies that 
    $f_+(\cdot;g,\tth,\kappa,\gamma)$ is increasing in $ [0,+\infty) $ 
    if and only if  it is {\it strictly} increasing.
    Therefore \eqref{casipm} is equivalent to the following claim.
    \\[1mm]
    {\bf Claim:} {\it 
    $\lim_{\xi\to 0^+} f_+''(\xi;g,\tth,\kappa,\gamma)\geq0$ if and only if
    $\xi\mapsto f_+(\xi;g,\tth,\kappa,\gamma)$ is increasing.}
    
    Since $ \lim_{\xi\to 0^+} f_+'(\xi;g,\tth,\kappa,\gamma) = 0 $ by 
    \eqref{fpmK1}, the implication $\Leftarrow$ is trivial.
    To prove the other implication we consider two different cases:
    
    {\it Case 1: }$\lim_{\xi\to 0^+} f_+''(\xi;g,\tth,\kappa,\gamma)>0$. 
    Then $ f_+'(\xi;g,\tth,\kappa,\gamma)>0 $ for any $ \xi > 0 $  
    by  Lemma \ref{CriticalValuesAreLocalMinima}. 
    Thus  $\xi\mapsto f_+(\xi;g,\tth,\kappa,\gamma)$ is increasing.
    
    {\it Case 2: }  $\lim_{\xi\to 0^+} f_+''(\xi;g,\tth,\kappa,\gamma)=0 $. 
    By \eqref{fpmK2} and \eqref{def:bond} we have
        $$
            0=\lim_{\xi \to 0^+}  f_+''(\xi;g,\tth,\kappa,\gamma)= \alpha
            \Big(\frac{\kappa}{g\tth^2} - \frac{\gamma}{6g}\sqrt{\big(4g+\gamma^2 \tth\big)\tth} - \frac{\tth\gamma^2}{6g}  - \frac13 \Big) \quad \text{with} \quad \alpha>0\, .
        $$
    Therefore, for any $\varepsilon>0$, $\lim_{\xi \to 0^+}  f''(\xi;g,\tth,\kappa+\varepsilon,\gamma)>0$ and thus, by case 1, the function $\xi \mapsto f_+(\xi;g,\tth,\kappa+\varepsilon,\gamma)$ is increasing in $ \xi \in [0,+\infty) $. Therefore the 
    limit function $ f_+(\xi;g,\tth,\kappa,\gamma) $
    as $ \varepsilon \to 0^+ $ is increasing as well in $ \xi \in [0,+\infty) $.
    The claim is proved.
\end{proof}

By Lemma \ref{CriticalValuesAreLocalMinima}
    in case (2a) the function $ f_{\vert(0,+\infty)}(\xi) $
has a unique local minimum.  
In cases (1b)-(2b) the function 
$f_{\vert(0,+\infty)}(\xi) $ is strictly decreasing 
still  by Lemma \ref{CriticalValuesAreLocalMinima}. 
The proof of Proposition \ref{paravari} is complete.

\section{Variational Lyapunov-Schmidt reduction}\label{sec:LS}

We decompose the phase space 
$ L^2 \times L^2 := L^2 (\T, \R) \times L^2 (\T, \R) $ 
equipped with the symplectic form $ \cW $ in \eqref{W}
as
		\begin{equation}\label{L2L2VW}
			L^2  \times L^2  = V \oplus W
		\end{equation}
where, recalling \eqref{dFc0},
\eqref{dNablaI} and \eqref{kernelL}, 
  \begin{align}
V & := \ker(\cL_{c_*})
  = \big\{ c_* \pa_x v +
  J \di_u \grad  \cH(0) v = 0 \}  =
  \big\{  
   \di_u \grad  (\cH + c_* \cI)  
  (0) v = 0 \} \label{primoV}  \\     
  & = 
  \bigoplus_{j \in \cV } V_j 
  \, , \label{defV} \\
  \label{Def W}
&   W := V^{\bot_{\cW}} := 
V_0 \oplus \mathscr{W}  
\, , \qquad 
\mathscr{W}:=\overline{\bigoplus_{j \in \Z \setminus \{0\},  
j \notin \cV} V_j}^{L^2\times L^2}, 
\end{align}
and $ V_j $,  $ j \in \Z $,  are the bi-dimensional symplectic subspaces defined in \eqref{defVj}. 
The subspaces $V$ and $W$ are symplectic and each one is the symplectic orthogonal of the other.
We denote by $ \Pi_V $ and $ \Pi_W $
the {\it symplectic} projectors on $ V $, respectively $ W $,
induced by the decomposition \eqref{L2L2VW}. 
Since $V$ and $W$ are symplectic orthogonal,  
the projectors $ \Pi_V $, $ \Pi_W $ satisfy
\begin{equation}\label{Symplectic Projectors}
    \cW(\Pi_V u,u_1)=\cW(u, \Pi_V u_1)\, , \quad 
    \cW(\Pi_W u,u_1)=\cW(u, \Pi_W u_1)\, , \quad
    \forall u,u_1 \in L^2 \times L^2 \, . 
\end{equation}
In order to solve
\eqref{eq:H} we implement a symplectic Lyapunov Schmidt-reduction.
According to  \eqref{L2L2VW} 
the space $X$ defined in \eqref{spaceX} and the  target space $Y$ defined  in \eqref{spaceY}
admit the 
decomposition  
\begin{equation}\label{XYVW}
        X = V\oplus (W\cap X) \, , \quad
        Y = V\oplus (W\cap Y) \, ,
\end{equation}
in 
symplectic orthogonal subspaces.  
We denote by 
$\Pi_{W\cap X} $ and 
$\Pi_{W\cap Y} $ the symplectic projectors  on $W\cap X$ and $W\cap Y $ induced by  \eqref{XYVW}. 
Decomposing uniquely $u \in X $
as  $ u = v + w $ with 
 $ v\in V$ and $w\in W\cap X$, 
 the equation \eqref{eq:H} is then equivalent to the system 
	\begin{equation}\label{RangeBifurcation}
		\left\{\begin{array}{@{}l@{}l}
			\Pi_V \cF(c, v+w)=0 \\
		      \Pi_{W\cap Y}\cF(c, v+w) =0  \, . 
		\end{array}\right. 
	\end{equation}
We call the first equation the bifurcation equation and the second one the range equation.  
We now solve the range equation by means of the implicit function theorem. 

    We denote by $B_r^V(0) $ the ball of radius $r$ and center 0 in $V$.
\begin{lemma}
{\bf (Solution of range equation)}\label{range equation}
		There exists  an analytic function 
		$w: B_r (c_*)
  \times B_r^V (0)
  \subset \R \times
  V 
  \to W\cap X$ 
  defined in a neighborhood   of $(c_*,0)$  satisfying 
  \begin{equation}\label{dw0}
  w(c,0) = 0 \, ,  
   \  \quad \di_v w (c,0) = 0 \, , 
   \quad\forall c \in B_r(c_*) \, ,
  \end{equation}
  such that 
\begin{equation}  \label{range}
	\Pi_{W\cap Y} \cF(c, v+w(c,v)) =0 \, . 
\end{equation}
The function $w(c, \cdot) $ is equivariant with respect to
the involution $\mathscr{S}$ and 
the translations $\tau_\theta$, namely 
	\begin{equation}\label{invarL}  
			w(c,\mathscr{S} v)=\mathscr{S} w(c, v)\, , \quad 
   w(c, \tau_\theta v)=\tau_\theta w(c,v) \, , \quad \forall v\in B_r^V(0) \, , \quad 
   \forall \theta\in \mathbb{R}  \, . 
		\end{equation}
	
	\end{lemma}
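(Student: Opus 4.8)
The plan is to apply the analytic implicit function theorem to the range equation
\[
\Phi(c,v,w) := \Pi_{W\cap Y}\,\cF(c, v+w) = 0
\]
viewed as a map from a neighborhood of $(c_*,0,0)$ in $\R \times V \times (W\cap X)$ to $W\cap Y$, solving for $w$ as a function of $(c,v)$. First I would check that $\cF$ is an analytic map from $\R\times X$ to $Y$ near the origin: the quadratic terms are obvious, and the nonlinear contributions of $G(\eta)\psi$ require the analyticity of the Dirichlet--Neumann operator, which is precisely the point where the hypothesis $s\ge 7/2$, $s+\tfrac12\in\N$ and the citation \cite{BMV2}[Theorem 1.2] enter; the change of variables $W$ in \eqref{wahlen} is linear and bounded on the scale $H^{\sigma,s}$, so it does not affect analyticity. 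Composing with the bounded linear symplectic projector $\Pi_{W\cap Y}$ keeps the map analytic. One has $\Phi(c,0,0)=0$ for all $c$ since $\cF(c,0)=0$.

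The decisive step is invertibility of $\di_w \Phi(c_*,0,0)$ as an operator $W\cap X \to W\cap Y$. By definition this partial differential equals $\Pi_{W\cap Y}\,\di_u\cF(c_*,0)\,\Pi_{W\cap X} = \Pi_{W\cap Y}\,\cL_{c_*}\!\mid_{W\cap X}$. Since $V=\ker(\cL_{c_*})=\bigoplus_{j\in\cV}V_j$ and $W$ is the symplectic orthogonal complement, namely $W=V_0\oplus\overline{\bigoplus_{j\notin\cV}V_j}$, and each $V_j$ is $\cL_{c_*}$-invariant by \eqref{cLVj}, the operator $\cL_{c_*}$ maps $W$ into $W$. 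On each block $V_j$ with $j\notin\cV$, $j\neq0$, formulas \eqref{cLincl} show that $\cL_{c_*}$ acts as rotation-scaling by the nonzero factor $c_* j - \Omega_j$, hence is invertible there with inverse of norm $|c_*j-\Omega_j|^{-1}$; on $V_0$ one checks separately using \eqref{Lc0inv}. The only subtlety is uniformity: I need the gap $|c_*j-\Omega_j|$ not to degenerate too fast. Because $\Omega_j$ is a symbol of order $3/2$ when $\kappa>0$ (order $1/2$ when $\kappa=0$), cf. \eqref{expOmega}, and $c_*j$ is of order $1$, the quantity $|c_*j-\Omega_j|$ grows like $|j|^{3/2}$ (resp.\ is bounded below by a positive constant in the $\kappa=0$ case, using the monotonicity of $f$ from Proposition \ref{paravari} together with finitely many exceptional modes in $\cV$). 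This yields a bounded inverse $W\cap Y \to W\cap X$ that even gains the two derivatives dictated by the definition \eqref{spaceY} of $Y$, so $\di_w\Phi(c_*,0,0)$ is an isomorphism. The analytic implicit function theorem then produces the unique analytic $w(c,v)$ on a ball $B_r(c_*)\times B_r^V(0)$ with $\Phi(c,v,w(c,v))=0$.

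The normalization \eqref{dw0} follows from uniqueness: $w(c,0)=0$ because $\cF(c,0)=0$, and differentiating \eqref{range} in $v$ at $v=0$ gives $\Pi_{W\cap Y}\,\cL_c\,(\uno+\di_v w(c,0))=0$; evaluating at $c=c_*$ and using that $\cL_{c_*}$ is invertible on $W\cap X$ and kills $V$ forces $\di_v w(c_*,0)=0$, and then the implicit function theorem's smooth dependence together with the same identity for $c$ near $c_*$—or more directly, expanding $w(c,v)=O(\|v\|^2)$ from analyticity in $v$ and the facts just noted—gives $\di_v w(c,0)=0$ for all $c$ in the neighborhood. For the equivariance \eqref{invarL}, I would use that $\cF(c,\cdot)$ commutes with $\mathscr{S}$ and $\tau_\theta$ (by \eqref{sym}, \eqref{sym tau}, \eqref{IsItau} applied to $\cF=J(\grad\cH+c\grad\cI)$, noting $\mathscr{S}$ and $\tau_\theta$ are symplectic so they intertwine $J$ and the gradients), that these symmetries preserve both $V$ and $W$ and hence commute with the symplectic projectors, so that $\mathscr{S}\,w(c,v)$ and $\tau_\theta\,w(c,v)$ both solve the range equation with data $\mathscr{S}v$, resp.\ $\tau_\theta v$; uniqueness in the implicit function theorem then yields \eqref{invarL}. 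The main obstacle is the uniform lower bound on $|c_*j-\Omega_j|$ and the associated verification that the inverse of $\cL_{c_*}$ on $W$ maps $Y$-regularity to $X$-regularity with the correct derivative count in both the $\kappa>0$ and $\kappa=0$ cases; everything else is a routine application of the analytic implicit function theorem and the symmetry bookkeeping.
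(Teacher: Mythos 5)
Your proposal follows the paper's proof quite closely: analytic implicit function theorem applied to $G(c,v,w)=\Pi_{W\cap Y}\cF(c,v+w)$, analyticity of $\cF$ from the Dirichlet--Neumann result of \cite{BMV2}, invertibility of $\Pi_{W\cap Y}\cL_{c_*}|_{W\cap X}$ from the block-diagonal structure on the $V_j$'s, and uniqueness to obtain \eqref{dw0} and \eqref{invarL}. That said, two of the stated justifications are wrong even though the conclusions survive.

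First, $\mathscr{S}$ is \emph{anti}-symplectic (it reverses $\cW$, as one checks directly from \eqref{W} and \eqref{sym}), and $\cF(c,\cdot)$ \emph{anti}-commutes with it: one has $\Pi_{W\cap Y}\cF(c,\mathscr{S}u)=-\mathscr{S}\,\Pi_{W\cap Y}\cF(c,u)$, as the paper records in the proof. The equivariance $w(c,\mathscr{S}v)=\mathscr{S}w(c,v)$ nevertheless follows from uniqueness because $-\mathscr{S}$ applied to zero is still zero, but the claim ``$\mathscr{S}$ is symplectic, so it intertwines $J$ and the gradients and $\cF$ commutes with it'' is false. ($\tau_\theta$ on the other hand is genuinely symplectic and does commute.) Second, in the $\kappa=0$ case the gap $|c_*j-\Omega_j|$ is not merely bounded below: since $\Omega_j$ has order $1/2$ while $c_*j$ has order $1$, the gap grows like $|c_*|\,|j|$, and this linear growth is actually \emph{needed}. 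Indeed for $\kappa=0$ one has $M_j\sim|j|^{1/4}$, so the ratio $\|v_j^{(1)}\|_X/\|v_j^{(1)}\|_Y$ itself grows like $|j|$, and a uniform positive lower bound on the gap would give an inverse whose $Y\to X$ operator norm on $V_j$ diverges like $|j|$. The paper avoids this issue by performing the explicit symbol calculus on the entries $A_{i,k}$ of the inverse in \eqref{DefAin}--\eqref{simb2} and reading off the orders; if you want to argue block-by-block in real coordinates, you must track the $j$-dependence of the embedding constants of $V_j$ into $X$ and $Y$, and then the required cancellation is exactly the linear growth of the gap, not a mere positive lower bound.
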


 \begin{proof}
	We apply the implicit function theorem 
to
$$
G : \R \times V \times
(W\cap X) \to (W\cap Y) \, , \  (c,v,w) \mapsto 
G(c,v,w) := \Pi_{W\cap Y} \cF(c, v+w) \, .
$$
The map $ G $ is analytic in a small neighborhood of 
$ (c_*,0,0) $ using  Theorem 1.2 in \cite{BMV2} 
about the analyticity  of the Dirichlet-Neumann operator
and the algebra properties of $H^{\sigma,s}(\T)$ 
which holds for $ s > 1/ 2 $. 
The condition $ s \geq \tfrac72 $ implies that $ s - \tfrac32 \geq s_0 \geq \tfrac{d+1}{2} $, 
where $ d = 1 $ 
and $ s_0 = 2  $
according to the notation of \cite{BMV2}[Theorem 1.2],
which also requires
$\frac{s+1}{2}\in \N $.
As commented below \eqref{spaceX} this is not restrictive for the results of the paper.
Theorem 1.2 of \cite{BMV2} is proved in the more delicate deep water case 
$ \tth = + \infty $ but its proof also holds if $ \tth < + \infty $, 
in this case see also \cite{NR}.
It results 
\begin{equation}\label{Gc00}
G(c,0,0) = 0 \, , \quad \forall c \in \R \, ,
\end{equation}
and we claim that
\begin{equation} \label{dG0}
\di_w G(c_*, 0,0 ) = \Pi_
{W\cap Y}  \di_u \cF(c_*,0)=\Pi_{W\cap Y}  \cL_{c_*} : (W \cap X)\to W \cap Y 
\end{equation}
is an isomorphism.
Indeed, in view of \eqref{cLVj}, \eqref{Lc0inv}, \eqref{cLincl},
\eqref{VjMCzj} and recalling 
\eqref{Def W} there exists a formal inverse $\cA : 
\la v_0^{(2)} \ra \oplus\mathscr{W} \to 
\la v_0^{(1)} \ra\oplus\mathscr{W} 
$ of
$\Pi_{W\cap Y}  \cL_{c_*} $,  
where
$ v_0^{(1)}, v_0^{(2)}$ are  
in \eqref{vv},  
defined by 
$$\cA v_0^{(2)} :=-\frac{1}{g}
v_0^{(1)} \, ,\quad
\begin{aligned}
    \cA :=\cM \cC 
    \left(
    \begin{matrix}
      (c_*\partial_x-\im \Omega(D))^{-1} &   0\\
        0   & (c_*\partial_x+ \im \overline{\Omega(D)})^{-1}
    \end{matrix}
    \right)
    \cC^{-1}\cM^{-1}
    \quad\text{on}\   \mathscr{W}\, .
\end{aligned}
$$
By \eqref{M} and \eqref{defC} the operator $\cA$ on $\mathscr{W}$ is given by the matrix of
Fourier multipliers 
\begin{equation}\label{DefAin}
    \left(
    \begin{matrix}
        A_{1,1}(D) & A_{1,2}(D)\\
        A_{2,1}(D) &  A_{2,2}(D)
    \end{matrix}
    \right)
\end{equation}
whose   symbols are (recall that the symbol of $\bar{\Omega(D)}$ is $\Omega_{-\xi}$)
\begin{align}\label{simb1}
    A_{1,1}(\xi)=A_{2,2}(\xi)=
        \frac{1}{2}&
    \frac{2c_*\xi+\Omega_{-\xi}-\Omega_\xi}{(c_*\xi -\Omega_\xi)(c_*\xi +\Omega_{-\xi})}\, ,\\
    A_{1,2}(\xi)=\frac{1}{2} M_\xi^2
            \frac{\Omega_\xi+\Omega_{-\xi}}
        {(c_*\xi-\Omega_\xi)(c_*\xi+\Omega_{-\xi})}&
        \, ,\ 
    A_{2,1}(\xi)=\frac{1}{2} M_\xi^{-2}
            \frac{\Omega_\xi+\Omega_{-\xi}}
        {(c_*\xi-\Omega_\xi)(c_*\xi+\Omega_{-\xi})}\, .\label{simb2}
\end{align}
In view of \eqref{M} the symbol 
$ M_\xi $
has order $ -\frac{1}{4} $ if $ \kappa > 0 $ and order 
$ \frac{1}{4} $ if $ \kappa = 0 $.  
Estimating the orders of 
the symbols \eqref{simb1}-\eqref{simb2}
using \eqref{Omega} and \eqref{expOmega}, the operator $\cA$ in \eqref{DefAin} is a
matrix of Fourier multipliers as 
$$
\left( \begin{matrix}{}
	OPS^{-2} & 	 OPS^{-2}  \\
        OPS^{-1} &   OPS^{-2}
\end{matrix} \right) \quad \text{for }\, \kappa > 0
\quad\text{and}\quad
\left( \begin{matrix}{}
				OPS^{-1} & 	 OPS^{-1}   \\
                    OPS^{-2} &   OPS^{-1}
\end{matrix} \right) \quad \text{for }\, \kappa=0\, .
$$
Here we denote by $OPS^{m}$ a Fourier multiplier operator with a symbol of order $m$, see e.g. \cite{BFM}[Definition 3.3].
Note that for $ \kappa > 0 $ the symbol 
$ A_{1,1}(\xi) = A_{2,2}(\xi) $  has order $ {-2}$, because, by  the 
cancellation \eqref{expOmega}, the symbol  $ \Omega_\xi - \Omega_{-\xi} $ has order $ 0 $. 
In conclusion in both cases $\cA$ can be extended to an operator from $W\cap Y = 
            (\mathscr{W}\oplus \la v_0^{(2)}\ra)  
            \cap Y$ to $W\cap X = 
            (\la v_0^{(1)}\ra \oplus\mathscr{W}) 
            \cap X$. 
This proves 
that $\di_w G(c_*,0,0)$ in \eqref{dG0} is an isomorphism.

The  existence of a unique 
analytic solution $w(c, v) $ of \eqref{range} defined for $ (c, v) $ close to $ (c_*,0) $
follows by 
the analytic 
implicit function theorem, 
see e.g. \cite{BuTo}[Section 4.5].
Since $ G(c,0,0) = 0 $ 
(cfr. \eqref{Gc00}) by uniqueness we have $w(c,0)=0$ for any $ c \in B_r(c_*)  $. Next we compute the derivative
of $ w (c,v) $ with respect to $ v $.
For any $ c $ close to $ c_* $ 
the differential 
$ \di_w G(c, 0,0 ) $ is invertible as  well as 
$ \di_w G(c_*, 0,0 ) $
and 
$$
\di_v w(c,0) = 
- \di_w G(c, 0,0 )^{-1}
 \Pi_{W\cap Y} 
 {\cL_c}_{|V} = 0 
$$
because $\cL_c V \subset V$  for any $ c $, cfr. \eqref{cLVj}. 

Finally in order to prove
\eqref{invarL} note  that by 
\eqref{sym}, \eqref{sym tau} and
\eqref{IsItau}, 
	\begin{equation*}
		\Pi_{W\cap Y} \cF(c, \tau_\theta u)=\tau_\theta  \Pi_{W\cap Y} \cF(c,u) \, , \quad
		\Pi_{W\cap Y} \cF(c, \mathscr{S}u)=-\mathscr{S} \Pi_{W\cap Y} \cF(c,u)
	\end{equation*}
(use also 
$\mathscr{S}^\top =\mathscr{S} = \mathscr{S}^{-1} $).
 Therefore by uniqueness we deduce
\eqref{invarL}.
\end{proof}	
	
	In view of  the previous lemma the system \eqref{RangeBifurcation} reduces  to solve the bifurcation equation
\begin{equation}\label{bifoL}
    \Pi_V \cF (c,v+w(c,v)) = 0 
\end{equation}
where $ w(c,v)$ is the solution of the range equation. 
 The equation \eqref{bifoL} is still variational since $\Pi_V\cF(c, v+w(c,v))$ is the 
 symplectic gradient  
 of the ``reduced Hamiltonian"
 $ \Phi (c, \cdot) $ defined below.

 \begin{lemma}\label{lemma:bifva}
 {\bf (Variational 
 structure of the bifurcation equation)}
 The function $ \Phi (\cdot , \cdot ) :
B_r(c_*) \times B_r^V(0)  \subset \R \times 
V \to \R $, $ (c, v ) \mapsto \Phi (c,v) $, defined by 
 \begin{equation}\label{ReducedProblem}	
\Phi (c, v ) := \Psi 
(c,v+w(c,v)) = (\mathcal{H}+c\mathcal{I})(v+w(c,v)) \,, 
\end{equation}
 is analytic on $ B_r(c_*) \times  B_r^V(0) $,
 and 	satisfies,  
 for any   $(c,v)\in  B_r(c_*) \times  B_r^V(0) $,
		\begin{equation}\label{symmetriesPhi}
			\Phi (c, \mathscr{S}v)= \Phi (c, v) \, , \quad  
   \Phi( c, \tau_\theta v)= \Phi (c,v) \, , \quad\forall \theta\in \mathbb{R} \, ,  
		\end{equation} 
  and  
\begin{equation}\label{Hamiltonian Vector Field of Phi}
			\di_v \Phi (c,v)[\widehat{v}]= \di_u \Psi (c, v+w(c, v))[\widehat{v}] =
   \cW(\Pi_V\cF(c, v+w(c,v)),\widehat{v})
   \, , \quad \forall \widehat v \in V \, . 
		\end{equation} 
Therefore if $ \bar v \in B_r^V (0)  $ 
is a critical point 
of $ v \mapsto \Phi(c, v) $ then 
$ \bar u := \bar v + w(c, \bar v )$
is  a solution of $ {\cF}(c, u) = 0 $. 
	\end{lemma}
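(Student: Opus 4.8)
The plan is to prove Lemma \ref{lemma:bifva} by combining the variational structure of the original problem \eqref{Variational} with the chain rule, exploiting that $w(c,v)$ solves the range equation \eqref{range} exactly. First I would establish analyticity: $\Phi(c,v) = \Psi(c, v+w(c,v))$ is the composition of the analytic Hamiltonian $\Psi(c,\cdot) = (\mathcal{H}+c\mathcal{I})$ (analytic by the analyticity of the Dirichlet--Neumann operator, \cite{BMV2}[Theorem 1.2], together with the algebra property of $H^{\sigma,s}$) with the analytic map $(c,v)\mapsto v+w(c,v)$ produced by the analytic implicit function theorem in Lemma \ref{range equation}; hence $\Phi$ is analytic on $B_r(c_*)\times B_r^V(0)$.

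Next I would verify the symmetries \eqref{symmetriesPhi}. Using the equivariance \eqref{invarL} of $w(c,\cdot)$ under $\mathscr{S}$ and $\tau_\theta$, together with the invariance $\Psi(c,\mathscr{S}u) = \Psi(c,u)$ and $\Psi(c,\tau_\theta u) = \Psi(c,u)$ (which follows from \eqref{sym}, \eqref{sym tau}, \eqref{IsItau}), one gets $\Phi(c,\mathscr{S}v) = \Psi(c,\mathscr{S}v + w(c,\mathscr{S}v)) = \Psi(c,\mathscr{S}(v+w(c,v))) = \Psi(c,v+w(c,v)) = \Phi(c,v)$, and similarly for $\tau_\theta$.

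The heart of the proof is the gradient identity \eqref{Hamiltonian Vector Field of Phi}. I would differentiate $\Phi(c,v) = \Psi(c, v + w(c,v))$ in $v$ along $\widehat v \in V$ via the chain rule:
\begin{equation*}
\di_v \Phi(c,v)[\widehat v] = \di_u \Psi(c, v+w(c,v))\big[\widehat v + \di_v w(c,v)[\widehat v]\big]\,.
\end{equation*}
Now $\di_v w(c,v)[\widehat v] \in W\cap X$, and by \eqref{sympl gradient} one has $\di_u \Psi(c,u)[\cdot] = \cW(\cF(c,u),\cdot)$ since $\cF(c,u) = J\nabla\Psi(c,u)$. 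Therefore the contribution of the $w$-variation term is $\cW(\cF(c,v+w(c,v)), \di_v w(c,v)[\widehat v])$, and using the symplectic-orthogonality of the decomposition $L^2\times L^2 = V\oplus W$ together with the range equation $\Pi_{W\cap Y}\cF(c,v+w(c,v)) = 0$, this term vanishes: indeed $\cW(\cF, \di_v w[\widehat v]) = \cW(\Pi_W \cF + \Pi_V\cF, \di_v w[\widehat v]) = \cW(\Pi_{W\cap Y}\cF, \di_v w[\widehat v]) = 0$ because $\Pi_V \cF$ is symplectically orthogonal to $W$ and $\di_v w[\widehat v]\in W$. (One must check that the projection $\Pi_{W\cap Y}$ applied to $\cF$ is the relevant one here, using that $\cF$ takes values in $Y$ and the compatibility of $\Pi_W$ with $\Pi_{W\cap Y}$ from \eqref{XYVW}.) Hence $\di_v\Phi(c,v)[\widehat v] = \cW(\cF(c,v+w(c,v)), \widehat v)$, and since $\widehat v\in V$ and $\Pi_V$ is symplectic \eqref{Symplectic Projectors}, this equals $\cW(\Pi_V\cF(c,v+w(c,v)),\widehat v)$, which is \eqref{Hamiltonian Vector Field of Phi}.

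Finally, if $\bar v$ is a critical point of $v\mapsto\Phi(c,v)$ then \eqref{Hamiltonian Vector Field of Phi} gives $\cW(\Pi_V\cF(c,\bar v + w(c,\bar v)),\widehat v) = 0$ for all $\widehat v\in V$; since $\cW$ restricted to $V$ is nondegenerate (each $V_j$ is symplectic and they are pairwise symplectic-orthogonal, cf. \eqref{2formW}), this forces $\Pi_V\cF(c,\bar v+w(c,\bar v)) = 0$, which together with the already-solved range equation \eqref{range} yields $\cF(c,\bar u) = 0$ with $\bar u := \bar v + w(c,\bar v)$. The main obstacle is purely bookkeeping: one must be careful that the symplectic gradient identity \eqref{sympl gradient}, stated abstractly, applies to $\Psi$ on the correct function spaces (the pairing $\cW$ between $X$ and $Y$, the role of the constant-average directions $v_0^{(1)},v_0^{(2)}$ excluded from $X$, cf. Remark \ref{Lc on V0}), and that the two projectors $\Pi_{W\cap Y}$ and $\Pi_W$ act consistently on $\cF(c,u)\in Y$; no hard analysis is involved beyond what Lemma \ref{range equation} already supplies.
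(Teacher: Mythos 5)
Your proof is correct and follows essentially the same route as the paper: you differentiate $\Phi(c,v)=\Psi(c,v+w(c,v))$ via the chain rule, kill the $w$-variation term using the range equation \eqref{range} together with the symplectic orthogonality $V\perp^{\cW}W$ (exactly the observation \eqref{range var} in the paper), and conclude from the nondegeneracy of $\cW$ on $V$. The only difference is that you spell out the analyticity and symmetry claims, which the paper states but does not verify inside the proof; the substance of the gradient-identity argument is the same.
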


 \begin{proof}
We first note that the range equation  \eqref{range}
has a variational meaning, being  
$ w(c,v ) $ a critical point 
of the  functional
$ w \mapsto \Psi(c, v + w ) $. Indeed
for any  
$ \widehat w \in W  $ we have,
recalling  \eqref{sympl gradient}, 
\eqref{eq:H}, 
    \begin{align}\label{range var}
\di_u \Psi( c, v+w(c,v))[\widehat w]  
   & = 
            \cW(\cF(c,v+w(c,v)),
            \widehat w )\, 
            \notag \\
            & 
            =
          \cW(
          \underbrace{\Pi_W\cF(c, v+w(c,v))}_{=0 \ \text{by} \  \eqref{range}}, \widehat{w}) = 0  \, . 
    \end{align} 
		Then, differentiating \eqref{ReducedProblem} with respect 
  to $v \in V $ in the direction $ \widehat v \in V $, we get 
    \begin{align}
			\di_v \Phi (c,v)[\widehat{v}]
   & = \di_u \Psi (c, v+w(c, v))[\widehat{v}]
			+
   \underbrace{\di_u \Psi( c, v+w(c,v))[\di_v w(c,v)[\widehat{v}]]}_{=0 \ \text{by}
   \ \eqref{range var} \ \text{and} \
   \di_v w(c,v)[\widehat{v}] \in W
   }  \notag \\
   & = \cW(\cF(c,v+w(c,v)),\widehat{v})  
            =
           \cW(\Pi_V\cF(c, v+w(c,v)),\widehat{v}) \label{4.20}
    \end{align}
    proving \eqref{Hamiltonian Vector Field of Phi}.
    In conclusion, by \eqref{4.20},  
    if
     $ \bar v \in B_r^V (0)  $ 
is a critical point 
of $ v \mapsto \Phi(c, v) $, then 
    $ \Pi_V\cF(c, \bar{v}+w(c,\bar{v})) = 0 $
    because
    $ \cW $ is non-degenerate.
	\end{proof}
 
	Summarizing, we have proved  that the original problem \eqref{eq:H} is equivalent, locally near 
 $ (c_*,0) $,  to find critical points  of the functional $ \Phi (c, \cdot) $ for some value of $ c $. 

The search of 
non-trivial critical points of  $ \Phi (c, v) $  constitutes the core of the proof of the main results  in Sections \ref{sec:speed} and \ref{sec:mom}. 
For that we first  
 expand $ \Phi (c, v) $  close to $ v= 0 $.

\begin{lemma}\label{lem:Gcu}
For any $ (c,v) \in B_r(c_*) \times B_r^V(0) $ 
the  function $ \Phi (c, v) $ in \eqref{ReducedProblem} has the form 
\begin{align}
\Phi (c, v)  & = 
(c-c_*) \cI(v) + G_{\geq 3}(c,v)
= 
\Big( 1 - \frac{c}{c_*} \Big) \cH_2 (v) + G_{\geq 3}(c,v) \label{exparedu} \\
& = \frac12 \cW(\cL_c v, v ) + G_{\geq 3}(c, v) \label{exparedu1}
\end{align}
where  $ \cH_2 (v) := \frac{1}{2}\langle \di_u \nabla\mathcal{H}(0)v,  v\rangle $, the speed $ c_* \neq 0 $
is defined in \eqref{def:c*}, 
and 
$ G_{\geq 3}(c,v) $ is an analytic function
vanishing  at $ v = 0 $ with cubic order for any $ c \in B_r (c_*) $. 
\end{lemma}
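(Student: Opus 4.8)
The plan is to Taylor-expand the reduced Hamiltonian $\Phi(c,v) = (\mathcal H + c\mathcal I)(v + w(c,v))$ around $v = 0$, exploiting the facts established in Lemma \ref{range equation}: namely $w(c,0) = 0$ and $\di_v w(c,0) = 0$. First I would record that $\mathcal H$ and $\mathcal I$ are analytic in a neighbourhood of $0$ (by Theorem 1.2 of \cite{BMV2} for the Dirichlet-Neumann operator and the algebra properties of $H^{\sigma,s}$), that $\mathcal H(0) = 0$, $\grad\mathcal H(0) = 0$, $\mathcal I(0) = 0$, $\grad\mathcal I(0) = 0$ (both functionals start at quadratic order, cf.\ \eqref{HamiltonianInCoord}, \eqref{MomentumInCoord}), and that $w$ depends analytically on $(c,v)$, so that $\Phi(c,\cdot)$ is analytic on $B_r^V(0)$ and admits a convergent Taylor expansion in $v$.

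The core computation is the identification of the quadratic part of $\Phi(c,\cdot)$. Writing $v + w(c,v) = v + O(\|v\|^2)$ because $w(c,0) = 0$ and $\di_v w(c,0) = 0$, and using that $\mathcal H + c\mathcal I$ vanishes to second order at $0$, all the cross terms between $v$ and $w(c,v)$ and all terms quadratic in $w(c,v)$ contribute only at order $\geq 3$ in $v$. Hence the quadratic part of $\Phi(c,v)$ equals the quadratic part of $(\mathcal H + c\mathcal I)(v)$, which is $\mathcal H_2(v) + c\,\mathcal I(v) = \tfrac12\langle \di_u\grad\mathcal H(0)v, v\rangle + \tfrac12\langle \di_u\grad\mathcal I(0)v, v\rangle = \tfrac12\langle \di_u\grad(\mathcal H + c\mathcal I)(0)v, v\rangle$. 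Using \eqref{HamiltonianInCoord}, \eqref{MomentumInCoord} and writing $v = \sum_{j\in\mathcal V}(\alpha_j v_j^{(1)} + \beta_j v_j^{(2)})\in V$ (so $v$ has no $V_0$ component since $V = \bigoplus_{j\in\mathcal V}V_j$ with $0\notin\mathcal V$), this quadratic form is $\tfrac12\sum_{j\in\mathcal V}(\Omega_j - cj)(\alpha_j^2 + \beta_j^2)$. Since $\Omega_j = c_* j$ for $j\in\mathcal V$ by \eqref{kernelL}, this equals $\tfrac12\sum_{j\in\mathcal V}(c_* - c)j(\alpha_j^2 + \beta_j^2) = (c_* - c)\cdot\big(-\tfrac12\sum_{j\in\mathcal V}j(\alpha_j^2+\beta_j^2)\big) = (c - c_*)\mathcal I(v)$, which gives the first equality in \eqref{exparedu}. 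For the second equality one again uses $\Omega_j = c_* j$ on $\mathcal V$ to rewrite $\mathcal I(v) = -\tfrac12\sum_{j\in\mathcal V}j(\alpha_j^2+\beta_j^2) = -\tfrac{1}{c_*}\cdot\tfrac12\sum_{j\in\mathcal V}\Omega_j(\alpha_j^2+\beta_j^2) = -\tfrac1{c_*}\mathcal H_2(v)$ (note $\mathcal H_2(v)$ restricted to $V$ has no $g\eta_0^2/2$ term since $\eta_0 = 0$ on $V$), whence $(c-c_*)\mathcal I(v) = -\tfrac{c-c_*}{c_*}\mathcal H_2(v) = (1 - \tfrac{c}{c_*})\mathcal H_2(v)$; here $c_* \neq 0$ because $\Omega_{j_*} > 0$ and $j_* \neq 0$, so $c_* = \Omega_{j_*}/j_* \neq 0$. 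Finally, \eqref{exparedu1} follows from \eqref{exparedu} by observing that on $V$, $\tfrac12\cW(\cL_c v, v) = \tfrac12\langle \di_u\grad(\mathcal H + c_*\mathcal I)(0)v,v\rangle + \tfrac12(c-c_*)\langle \di_u\grad\mathcal I(0)v,v\rangle$; the first summand vanishes since $v\in V = \ker\cL_{c_*}$ and $\cL_{c_*} = J\di_u\grad(\mathcal H + c_*\mathcal I)(0)$ with $J$ invertible (so $\di_u\grad(\mathcal H + c_*\mathcal I)(0)v = 0$ on $V$), leaving $\tfrac12(c-c_*)\langle\di_u\grad\mathcal I(0)v,v\rangle = (c-c_*)\mathcal I(v)$.

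Then I would set $G_{\geq 3}(c,v) := \Phi(c,v) - (c - c_*)\mathcal I(v)$, which by the above equals the sum of the cubic-and-higher order terms of the Taylor expansion of $\Phi(c,\cdot)$; analyticity of $\Phi$ and of $v\mapsto (c-c_*)\mathcal I(v)$ gives analyticity of $G_{\geq 3}(c,\cdot)$, and the vanishing of $\Phi$ and its first and second $v$-derivatives at $v = 0$ beyond the recorded quadratic term shows $G_{\geq 3}$ vanishes to cubic order at $v = 0$, uniformly for $c\in B_r(c_*)$. I do not anticipate a serious obstacle; the only point requiring a little care is verifying that the $V_0$-components genuinely drop out — this is because $\mathcal V\subset\Z\setminus\{0\}$ so $V\cap V_0 = \{0\}$, and both $\mathcal H_2$ and $\mathcal I$ restricted to the $v_j$-coordinates with $j\neq 0$ have no $\eta_0$ or $\zeta_0$ contribution — and that $w(c,v)$ contributes nothing at quadratic order, which is exactly the content of $\di_v w(c,0) = 0$ from \eqref{dw0}.
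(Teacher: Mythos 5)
Your proof is correct and follows essentially the same route as the paper's: Taylor-expand $\Phi(c,v)=(\mathcal H+c\mathcal I)(v+w(c,v))$, use $w(c,0)=0$ and $\di_v w(c,0)=0$ to see that the quadratic part is $\tfrac12\langle \di_u\nabla(\mathcal H+c\mathcal I)(0)v,v\rangle$, and then exploit $V=\ker\cL_{c_*}$ (i.e.\ $\di_u\nabla(\mathcal H+c_*\mathcal I)(0)v=0$ on $V$) to simplify. The only cosmetic difference is that you pass through the explicit coordinates \eqref{HamiltonianInCoord}--\eqref{MomentumInCoord} and $\Omega_j=c_*j$ to obtain \eqref{exparedu} before \eqref{exparedu1}, whereas the paper directly rewrites the quadratic form as $\tfrac12\cW(\cL_c v,v)$ and uses the abstract identity $\mathcal H_2(v)=-c_*\mathcal I(v)$ on $V$; both are the same computation.
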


\begin{proof}
A Taylor expansion of 
\eqref{ReducedProblem}, 
using $ \cH (0) =  \cI (0) = 0 $, 
$ \nabla \cH (0) = \nabla \cI (0) = 0  $ 
and \eqref{dw0} gives
\begin{align}
\Phi (c, v) & = 
\frac12 \big\la \di_u \nabla (\cH+c\cI)(0)v,v \big\ra + G_{\geq 3}(c,v) \label{tainter} \\
& \stackrel{\eqref{W}, \eqref{eq:H}}=
\frac12 \cW\big( \underbrace{\di_u \cF(c,0)}_{= \cL_c \, \text{by} \, \eqref{dFc0}}v,v \big) + G_{\geq 3}(c,v) 
\notag \, . 
\end{align}
This proves \eqref{exparedu1}. 
Formula  \eqref{exparedu} follows by \eqref{tainter}  because, 
 by 
\eqref{primoV} and \eqref{MomentumInCoord},  
$$ 
\cH_2(v) = \frac12 \la \di_u \nabla \cH(0)v,v \ra  = - c_* \cI (v ) \, , \quad 
\cI (v) = 
\frac12 \la \di_u \nabla \cI(0)v,v \ra 
$$
for any $ v \in V $. 
\end{proof}

Recalling \eqref{defV} and
\eqref{deco-real} 
any  function of  
$ V = \ker(\cL_{c_*}) $ 
in \eqref{kernelL} can be written as
\begin{equation}\label{vkernel}
v = \sum_{ j \in {\cal V} } 
   \alpha_j v_j^{(1)} + \beta_j v_j^{(2)}    \quad \text{where} \ 
   {\cal V} \mbox{ is  defined in } 
   \eqref{kernelL} \, ,
\end{equation}
 the  vectors $ (v_j^{(1)},  v_j^{(2)})_{j \in {\cal V}} $  
defined in \eqref{vv} 
form a symplectic basis of $ V $, 
and the coordinates 
$ \alpha_{j} := \alpha_{j}(v) $, 
$ \beta_{j} 
:= \beta_{j}(v) $ are 
 given  in  \eqref{coordinatev}. 
In view of 
\eqref{HamiltonianInCoord},
\eqref{MomentumInCoord}, 
 we have 
\begin{equation}\label{HIV}
	\mathcal{I}(v)  = 
			- \frac12 \sum_{
   j \in \cV } j (\alpha_j^2+\beta_j^2) \, , 
   \quad 
            \cH_2 (v) = \frac12 
            \sum_{j \in \cV  }
            \Omega_j (\alpha_j^2+\beta_j^2) \, . 
    \end{equation}

    \begin{remark}
    The cubic Hamiltonian $ G_{\geq 3}(c,v) $ in  \eqref{exparedu}
is in ``Birkhoff resonant normal form", namely  
$ \{ G_{\geq 3}(c,\cdot ), \cI \} = 0 $ where 
$ \{ F,G \} :=  
\cW (X_F,X_G) $ 
is the Poisson bracket between two functions on $ V $.
Indeed the reduced Hamiltonian
$ \Phi(c, \cdot )  $ in 
\eqref{ReducedProblem}
defined on the symplectic space 
$ V $ in \eqref{defV}  
has the prime integral 
$ \cI (v) $,  namely 
$ \{ \Phi(c, \cdot), \cI \} = 0 $. 
\end{remark}
\noindent 
{\bf Notation.} In the sequel 
we  denote  
$ \Phi (c, v) $ equivalently as 
$ \Phi (c, (\alpha_j,\beta_j)_{
j \in {\cal V}}) $. 
\\[1mm]
      By Proposition \ref{KER24}
 the space $ V $ can be either $2$
or $ 4 $ dimensional.

\paragraph{Non resonant case.}\label{parNR} 
If  
$ 
\ker(\cL_{c_*}) = V = \{ v = \alpha_* v_{j_*}^{(1)} +
\beta_* v_{j_*}^{(2)} \} $  is $2 $-dimensional (for simplicity we denote
$ \alpha_* = \alpha_{j_*}$
and $ \beta_* = \beta_{j_*}$), 
the symmetries \eqref{symmetriesPhi} show,
recalling 
\eqref{SRtheta}, that 
$$
\Phi (c, \alpha_*, \beta_* ) = \Phi (c,  
R(-j_* \theta)(\alpha_*, \beta_* )) \, , \ 
\forall \theta \in \R \, , 
\quad \Phi (c, \alpha_*, \beta_* ) = \Phi (c,  \alpha_*, - \beta_* ) \, , 
$$
and therefore the 
functional  
$\Phi (c, \alpha_*, \beta_* ) $
is a radial
(i.e. is a function  of $ \alpha_*^2 + \beta_*^2 $) 
for any $c $. 
This shows that all the 
critical points of $ \Phi (c, \alpha_*, \beta_* ) $
 are 
obtained by rotations of  
critical points of the function 
$ \alpha_* \mapsto 
 \Phi (c, \alpha_*, 0 ) $
of one variable only. 
In view of \eqref{exparedu} and  \eqref{HIV}, we have  
\begin{equation}\label{exparedur}
\Phi (c, \alpha_*,0)  =
- \tfrac12 (c-c_*) j_* \alpha_*^2 + G_{\geq 3}(c,\alpha_*,0) \, .
\end{equation}
Since 
\begin{equation}\label{CR.t}
\pa_c \pa_{\alpha_*}^2 \Phi (c, \alpha_*,0) = - j_* \neq 0  \, , 
\end{equation}
by the  implicit function theorem,
for any $ \alpha_* $ small enough, 
there exists a  unique speed $ c(\alpha_*)$, analytic in $ \alpha_* $, such that $(\pa_{\alpha_*} \Phi)(c(\alpha_*), \alpha_*) = 0$.
Actually \eqref{CR.t}  is nothing but 
the Crandall-Rabinowitz transversality condition,  
which   requires $\pa_c \cL_c v_{j_*}^{(1)}\vert_{c_*}$ to not belong to the range of $$
\cL_{c_*} : \big(\la v_0^{(1)}\ra \bigoplus\limits_{j \in \Z\setminus \{0\}} \la v_j^{(1)}\ra \big) \cap X \to \big(\la v_0^{(2)}\ra \bigoplus\limits_{j \in \Z\setminus \{0\}}\la v_j^{(2)}\ra\big)\cap Y=:Y^{(2)} \, . 
$$
Here  the domain and target spaces have been restricted so that  
$\ker (\cL_{c_*})$ is 1-dimensional and  the range $\cR:=\big(\la v_0^{(2)}\ra \bigoplus\limits_{j \neq j_*}\la v_j^{(2)}\ra\big)\cap Y$ is of codimension 1.
Now, 
in view of  \eqref{exparedu1},
$$
 \pa_c \pa_{\alpha_*}^2 \Phi (c, \alpha_*,0) = \cW\big( \pa_c \cL_c v_{j_*}^{(1)}, v_{j_*}^{(1)} \big)  \, , 
$$
which is non-zero iff  $\pa_c \cL_c v_{j_*}^{(1)} \not\in \langle v_{j_*}^{(1)}\rangle^{\bot_{\cW}} \cap Y^{(2)} \equiv \cR$, namely the transversality condition.

All the
non-trivial solutions of \eqref{eq:H} close to $(c_*,0)$ can be parametrized
as rotations of  the Stokes waves  
\begin{equation}\label{defuep}
u_\e = \e v_{j_*}^{(1)} + 
\underbrace{w 
\big( c_\e, \e v_{j_*}^{(1)} \big)}_{= \cO (\e^2)} \in X 
\, , \quad 
c_\e = c_* + \underbrace{\wt c_\e}_{= \cO (\e^2)}  \, ,  \quad  
v_{j_*}^{(1)} := \vect{M_{j_*} \cos(j_*x)}{M_{j_*}^{-1}\sin(j_* x)}  \, .
\end{equation}
Since the ``amplitude-speed" map 
$ \e \to c_\e $ is real analytic (as proved below \eqref{CR.t}) and  
$ (c, v) \mapsto w(c,v)$
is analytic as a map 
$ B_r(c_*)\times B_r^V\to X$  by Lemma \ref{range equation}, 
the Stokes wave $ \e \to u_\e $ in \eqref{defuep} is analytic
as a map $  \{ |\e| \leq \e_0 \}\to X $. We remind that $ X$  is the space of analytic 
$ 2\pi $-periodic functions in \eqref{spaceX}.
The function 
$ w 
\big( c_\e, \e v_{j_*}^{(1)}) $
has the first component even in $ x $
and the second one odd in $ x $, because
$w(c_\epsilon,\epsilon v_{j_*}^{(1)})=\mathscr{S}w(c_\epsilon, \epsilon v_{j_*}^{(1)})$ by
\eqref{invarL} and  $\mathscr{S}v_{j_*}^{(1)}=v_{j_*}^{(1)}$.

\smallskip
In other words, in the non-resonant case when 
dim $ \ker(\cL_{c_*}) = 2 $,  
any Stokes wave is a 
rotation of  a Stokes wave with
$ \eta (x) $ even and $ \zeta (x) $ odd and 
the applicability of the 
Crandall-Rabinowitz bifucation theorem
is an automatic consequence of the 
Hamiltonian  variational 
structure 
of the equations.

\section{Resonant case}\label{sec:Reso}

We now consider the resonant case
when 
dim $ \ker (\cL_{c_*}) = 4 $.
In view of \eqref{kernelL}, \eqref{defVj} we have 
$$
\ker(\cL_{c_*}) = V = \Big\{ v=  \alpha_{j_*} v_{j_*}^{(1)} +
\beta_{j_*} v_{j_*}^{(2)} +
\alpha_{j} v_{j}^{(1)} +
\beta_{j} v_{j}^{(2)} \colon \alpha_{j_*}, \alpha_{j}, \beta_{j*}, \beta_{j} \in \R\Big\} 
$$
where  
$ j \neq j_* $ is the other integer such that
$
\frac{\Omega_j}{j} =  c_* =
\frac{\Omega_{j_*}}{j_*} $
(cfr. \eqref{omegainter}).

Note  that, by \eqref{symmetriesPhi} and 
\eqref{SRtheta}, 
the function $ \Phi (c, v) $ in \eqref{ReducedProblem} satisfies the symmetries
\begin{align}
& \Phi (c, \alpha_{j_*}, \beta_{j_*}, \alpha_j, \beta_j 
) = \Phi 
\big( c,  
R(-j_* \theta)(\alpha_{j_*}, \beta_{j_*} ),
R(-j \theta)(\alpha_{j}, \beta_{j}) \big) \, , \ 
\forall \theta \in \R \, ,
\label{revs1} \\
& 
 \Phi (c, \alpha_{j_*}, \beta_{j_*},
\alpha_{j}, \beta_{j}) = \Phi (c, 
\alpha_{j_*}, - \beta_{j_*},
\alpha_{j}, -
\beta_{j} ) \, . \label{revs2}
\end{align}
The 
reversibility symmetry 
\eqref{revs2}  implies 
that the derivatives 
$ (\pa_{\beta_j} 
\Phi) (c, \alpha_{j_*}, 0,
\alpha_{j}, 0) = 0 $ $ = 
(\pa_{\beta_{j_*}} 
\Phi) (c, 
\alpha_{j_*}, 0,
\alpha_{j},0 ) $ and thus 
if $(\underline{\alpha}_{j_*}, \underline{\alpha}_{j})$ is a 
critical point of 
\begin{equation}\label{crieven}
(\alpha_{j_*}, \alpha_{j})
\mapsto \Phi (c, \alpha_{j_*}, 0, \alpha_{j}, 0) 
\end{equation}
then   $(\underline{\alpha}_{j_*}, 0, \underline{\alpha}_{j},0)$ is
a critical point of $
(\alpha_{j_*}, \beta_{j_*}, \alpha_j, \beta_j) \mapsto 
\Phi (c, \alpha_{j_*}, \beta_{j_*}, \alpha_j, \beta_j) $. 
The corresponding 
 Stokes wave 
$$ 
u = \alpha_{j_*} v_{j_*}^{(1)} +
\alpha_{j} v_{j}^{(1)} +w(c, \alpha_{j_*} v_{j_*}^{(1)} +
\alpha_{j} v_{j}^{(1)} )
$$ 
has the $ \eta $ component which is {\it even}. The  
 Stokes waves in the orbit
$ \{ \tau_\theta u \}_{\theta \in \R} $  are called 
{\it symmetric}, cfr. \cite{MS}, \cite{Seth}. 
However
\eqref{revs1} does not imply that
all the Stokes waves of \eqref{EqWWZakharovHamNewCoordinates} are symmetric, as shown in \cite{MS}, \cite{Seth}.
 Equivalently 
there could be 
critical points of $ v \mapsto \Phi(c, v) $ which are not obtained by a $ \theta $-translation
of critical points of 
\eqref{crieven}.

\begin{definition}
{\bf (Geometrically distinct critical points) }\label{def:geocr}
Two non-trivial critical points 
of $ \Phi(c, v) $  are geometrically 
distinct  if they are not obtained 
by applying the translation operator $ \tau_\theta $ or the reflection operator $ \mathscr{S} $ to the other one. Equivalently 
if they are not in the same orbit generated by the 
action of $ {\mathbb S}^1 \rtimes \mathbb{Z}_2 \cong O(2) $, cfr. Remark  
\ref{O2action}. 
\end{definition}

We are going to prove
the existence of  non trivial critical $ O(2)$-orbits 
of $ \Phi (c, v) $,  
parametrized by the speed $ c \sim c_* $
in Section \ref{sec:speed}, or the momentum $ \cI (v) = a \sim 0 $, in Section \ref{sec:mom}.

\subsection{Stokes waves parametrized by the speed}
\label{sec:speed}

In this section we prove Theorem \ref{simpl}.
For definiteness in the sequel we  assume that  $ j_*, j < 0 $ 
(recall that  $ j, j_* $ 
have the {\it same} sign by \eqref{segni}).  The other case follows similarly.  
With this choice the momentum  in  \eqref{HIV} is the  positive definite 
quadratic form
\begin{equation}\label{momeI6}
\cI (v) =  \underbrace{\frac12 |j_*| (\alpha_{j_*}^2+\beta_{j_*}^2) + 
\frac12 |j| (\alpha_{j}^2+\beta_{j}^2)}_{=: \| v \|_*^2 }   \ ,  
\end{equation}
 and 
$ \| v \|_* := \cI (v)^{1/2}  $ 
is a norm on $ V $. Thus
the functional $ \Phi(c, v) $ has, by  \eqref{exparedu}, 
 a local minimum at $v =0$ for any $c > c_*$, and a local maximum for any $c < c_*$.

For simplicity of notation  we denote $ B_r^V (0) \equiv B_r^V $.

\smallskip

Consider the analytic function 
$ \Phi(c_*, v) = G_{\geq 3} (c_*, v) $ which vanishes cubically at $ v = 0 $ by Lemma \ref{lem:Gcu}. 
If $ v= 0 $ is {\it not} an isolated critical point of 
$ \Phi( c_* , \cdot  ) $ then alternative $(i)$ of Theorem  \ref{simpl} holds: there exists a sequence $ v_n \to 0 $ of critical points of $ \Phi (c_*, \cdot )$ and thus,  
in view of Lemma \ref{lemma:bifva}, a sequence of solutions
$$
u_n = v_n + w(c_*, v_n) \quad \text{of} \quad   \cF(c_*, u_n) = 0 \, , \quad
\text{with} 
\quad 
v_n \to 0 \, . 
$$
Thus, in the following, we assume
$ v = 0 $ is an isolated critical point of $ \Phi( c_*, \cdot ) $.
Consequently $ v = 0 $ is either 
\begin{itemize}
\item[($ii$)] a strict local maximum or minimum for  $ \Phi( c_*, \cdot ) $;
\item[($iii$)] $ \Phi( c_*, \cdot  ) $ takes on both positive and negative
values near $ v= 0 $;
\end{itemize} 
which correspond to alternatives $(ii)$ and 
 ($iii$) of Theorem \ref{simpl}. 

\medskip

{\it Case (ii)}: Suppose $ v= 0$ is a strict local maximum of $ \Phi( c_*, \cdot  ) $ 
(to handle the case of a strict local minimum just 
replace $\Phi  $ with $ - \Phi $). 
Since  $ \Phi( c_*, 0 ) = 0  $, for $ r > 0 $ small enough, 
\begin{equation}
\label{betanega}
\exists  \beta > 0  \quad {\rm such \ that} \quad 
\Phi_{| \partial B_r^V }( c_* , \cdot ) \leq - 2 \beta \, . 
\end{equation}
By continuity, for $ c $ sufficiently close to  $ c_* $, 
\begin{equation}\label{bouneg}
\Phi_{| \partial B_r^V }( c , \cdot ) \leq - \beta \, .
\end{equation}
By  \eqref{exparedu} and \eqref{momeI6}, for any 
 $ c > c_* $, 
the functional $\Phi(c , \cdot )$ has a local minimum
at $ v =0 $, and there exist $ \rho \in (0,r) $ and  $ \alpha ( c ) > 0 $ such that 
\begin{equation}\label{mpca}
\Phi(c,v ) \geq   \alpha (c) > 0 \, , \qquad \forall 
v \in \partial B_\rho^V \, . 
\end{equation} 
The maximum 
$$
{\overline m}(c) := 
\max_{v \in \bar{B_r^V}} \Phi (c, v ) \geq \alpha (c) > 0  
$$
is attained at a point $ \bar v $
in $ B_r^V $ 
because $ \Phi( c , \cdot )$ 
is negative on $ \partial B_r^V $
by \eqref{bouneg}. Furthermore, $ \bar v \neq 0 $ 
because $ \Phi (c, \bar v )  = {\overline m}(c) >0 $ and 
$ \Phi( c , 0 ) = 0 $. 
To find another
geometrically distinct (cfr. Def. \ref{def:geocr}) non trivial  
critical point of $\Phi (c , \cdot  ) $  we define
the Mountain Pass critical level 
\begin{equation}\label{mplevel}
{\underline m}(c) := \inf_{\gamma \in \Gamma} \max_{t \in [0,1]} \Phi(c, \gamma(t))
\end{equation}
where the minimax class $\Gamma $ is 
\begin{equation}
\label{casoa}
\Gamma := \Big\{ \gamma \in C([0,1], \bar{B_r^V}) \ : \  \gamma(0) = 0 \ {\rm and}  \  \gamma (1) \in 
\partial B_r^V  \Big\} \, .
\end{equation}
Since any path $ \gamma \in \Gamma $ intersects $ \{ v \in V \, : \, \|v\| = \rho \} $, 
by \eqref{mpca},  
\begin{equation}\label{cposi}
{\underline m}(c) \geq \alpha (c) > 0 \, . 
\end{equation}
To prove that  ${\underline m}(c) $ is a critical value, 
we can not
directly apply 
the Mountain Pass Theorem of Ambrosetti-Rabinowitz \cite{AR} because $\Phi(c, \cdot )$ is 
defined only in a neighborhood of $ 0 $. However, since 
\begin{equation}\label{MPgeo}
 {\underline m}(c ) \geq \alpha(c) >  0 >
 - \beta \geq \Phi(c, \cdot  )_{|\partial B_r^V} \, , 
\end{equation} 
we adapt its proof showing that $ {\underline m}( c ) $ is a critical value. 
The following lemma holds. 

\begin{lemma}\label{lemPS}
There exists a Palais-Smale sequence 
$ \{v_n\}_{n \geq 0} \subset B_r^V $ at the level $  {\underline m}(c ) $, i.e.  such that  
\begin{equation}
\label{mp1}
\Phi(c,  v_n  ) \to  {\underline m}(c) \, , \quad  \nabla_v \Phi(c,  v_n ) \to 0 \, \mbox{ as } n \to \infty \ . 
\end{equation}
\end{lemma}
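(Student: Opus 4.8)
\emph{Proof strategy.} The plan is to run the classical Mountain Pass deformation argument \cite{AR}, localized to the ball $B_r^V$ on which $\Phi(c,\cdot)$ is defined; since $\dim V<\infty$ one may deform directly along the negative gradient flow of $\Phi(c,\cdot)$, with no need for pseudo-gradient vector fields. Arguing by contradiction, suppose no Palais--Smale sequence exists at the level $\underline{m}(c)$. Then, after possibly shrinking, there are $\varepsilon_0,\delta>0$ with $\underline{m}(c)-\varepsilon_0>0$ such that $\|\nabla_v\Phi(c,v)\|\ge\delta$ for every $v\in B_r^V$ with $|\Phi(c,v)-\underline{m}(c)|\le\varepsilon_0$. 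Since $\Phi(c,\cdot)\le-\beta<0$ on $\partial B_r^V$ by \eqref{MPgeo}, the set $A:=\{v\in\overline{B_r^V}:|\Phi(c,v)-\underline{m}(c)|\le\varepsilon_0\}$ is compact and disjoint from $\partial B_r^V$, hence $A\subset B_{\rho'}^V$ for some $\rho'\in(0,r)$; fix $r'\in(\rho',r)$.

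Next I would introduce a localized descent field. Pick $\chi\in C^\infty(V,[0,1])$ with $\chi\equiv1$ on $\overline{B_{\rho'}^V}$ and $\{\chi\neq0\}\subset B_{r'}^V$, and $\varrho\in C^\infty(\R,[0,1])$ with $\varrho\equiv1$ on $[\underline{m}(c)-\varepsilon_0/2,\underline{m}(c)+\varepsilon_0/2]$ and $\{\varrho\neq0\}\subset(\underline{m}(c)-\varepsilon_0,\underline{m}(c)+\varepsilon_0)$, and set $Y(v):=-\chi(v)\,\varrho(\Phi(c,v))\,\nabla_v\Phi(c,v)$, extended by $0$ to $V\setminus B_{r'}^V$. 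Since $\Phi$ is analytic, $Y$ is bounded and globally Lipschitz, so its flow $\Theta_s$ is globally defined; along it $\frac{d}{ds}\Phi(c,\Theta_s v)=-\chi(\Theta_s v)\,\varrho(\Phi(c,\Theta_s v))\,\|\nabla_v\Phi(c,\Theta_s v)\|^2\le0$, so $\Phi(c,\cdot)$ is non-increasing along the flow. Moreover $Y$ vanishes on $\overline{B_r^V}\setminus B_{r'}^V$, so $\Theta_s$ maps $\overline{B_r^V}$ into itself (the boundary $\partial B_{r'}^V$ consists of equilibria, and by uniqueness no other trajectory can cross it); it fixes $0$, because $\Phi(c,0)=0\notin\{\varrho\neq0\}$, and it is the identity on $\partial B_r^V$, because $\chi=0$ there.

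Then comes the quantitative estimate. Whenever $\Phi(c,\Theta_s v)\in[\underline{m}(c)-\varepsilon_0/2,\underline{m}(c)+\varepsilon_0/2]$, the point $\Theta_s v$ lies in $A\subset B_{\rho'}^V$, so $\chi(\Theta_s v)=\varrho(\Phi(c,\Theta_s v))=1$ and $\frac{d}{ds}\Phi(c,\Theta_s v)=-\|\nabla_v\Phi(c,\Theta_s v)\|^2\le-\delta^2$; hence, setting $T:=\varepsilon_0\delta^{-2}$, every $v\in\overline{B_r^V}$ with $\Phi(c,v)\le\underline{m}(c)+\varepsilon_0/2$ satisfies $\Phi(c,\Theta_T v)\le\underline{m}(c)-\varepsilon_0/2$. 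By definition of $\underline{m}(c)$ in \eqref{mplevel} choose $\gamma\in\Gamma$ with $\max_{t\in[0,1]}\Phi(c,\gamma(t))<\underline{m}(c)+\varepsilon_0/2$; then $\Theta_T\circ\gamma\in\Gamma$ --- its endpoints are still $0$ and a point of $\partial B_r^V$, and its image stays in $\overline{B_r^V}$ --- while $\max_{t\in[0,1]}\Phi(c,\Theta_T(\gamma(t)))\le\underline{m}(c)-\varepsilon_0/2<\underline{m}(c)$, contradicting the definition of $\underline{m}(c)$. Therefore \eqref{mp1} holds, and, taking $\varepsilon=1/n$ in the negated statement, the Palais--Smale sequence may be chosen inside $A$, hence in a compact subset of $B_r^V$.

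The step I expect to demand the most care is precisely this localization: $\Phi(c,\cdot)$ is defined only on $B_r^V$, so the deformation must never push a path out of the ball and must preserve the two conditions defining $\Gamma$. This is exactly what the strict mountain pass inequalities \eqref{MPgeo} provide --- the gap $\underline{m}(c)>0>-\beta\ge\Phi_{|\partial B_r^V}$ together with $\Phi(c,0)=0$ --- since they force the ``active'' set $A$ to be compact, at positive distance from $\partial B_r^V$ and, in energy, away from $v=0$, which is what legitimizes the cut-offs $\chi$, $\varrho$ and keeps the points $0$ and $\partial B_r^V$ fixed under the flow.
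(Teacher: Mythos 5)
Your proof is correct, and it is essentially the same deformation argument that the paper uses in Appendix \ref{def:A1}. The only differences are cosmetic: you use a smooth spatial cut-off $\chi$ together with an energy cut-off $\varrho$ and deform with the (unnormalized) truncated negative gradient flow for time $T=\varepsilon_0\delta^{-2}$, whereas the paper builds a single locally Lipschitz bump function $g$ supported between the sets $\tilde N$ and $N$ and deforms with the normalized field $-g(v)\nabla\Phi/\|\nabla\Phi\|$ for time $1$. Both choices implement the same localization, and both exploit exactly the same structural feature \eqref{MPgeo} (that $\underline m(c)>0$ while $\Phi(c,\cdot)$ is strictly negative on $\partial B_r^V$ and vanishes at $v=0$) to make the flow fix $0$ and $\partial B_r^V$ and hence preserve the min-max class $\Gamma$, so no new idea is introduced and no step is missing.
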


The proof is based on a classical deformation argument, that we report in detail  in Appendix \ref{sec:App} for  completeness. 

\begin{remark}\label{rem53}
Let us  comment the idea of the proof for readers not acquainted with critical point theory. 
If, by contradiction, there is no Palais-Smale sequence satisfying 
\eqref{mp1}, 
it is possible to 
deform continuously 
a sublevel  $ \{ \Phi(c,v) \leq \underline{m}(c) + \mu \}$ into  
 $ \{ \Phi(c,v) \leq \underline{m}(c) -  \mu \}$ for  any $ \mu $
 small enough, contradicting 
the min-max definition \eqref{mplevel} 
of mountain pass level 
${\underline m}(c) $. 
We construct such deformations
as  the flow $ \eta^t $ of the negative gradient  $ - \nabla \Phi(c, \cdot) $ multiplied by a cut-off function which is $0$ in a neighborhood of
$ \partial B_r^V $ --where the functional $ \Phi(c, \cdot) $ is negative by \eqref{MPgeo}-- and takes the value $1$ on positive level sets of $\Phi(c,\cdot)$.
In this way the flow $\eta^t:\overline{B_r^V}\to \overline{B_r^V}$ deforms positive sublevels and 
we conclude 
the existence of a Palais-Smale sequence at the
mountain pass level 
${\underline{m}}(c) >0 $, cfr. 
Appendix \ref{def:A1}.
\end{remark}

As a corollary of Lemma \ref{lemPS}, there exists a non trivial critical point $ \underline  v \in B_r^V $ of $\Phi (c,  \cdot )$ at the  level  
 ${\underline m} ( c )$.
Indeed, 
by compactness, up to subsequence,  
$ v_n $ converges to some 
$ \underline  v $ belonging to $ \bar{B_r^V} $. Actually $ \underline  v $ belongs to $ B_r^V  \setminus \{ 0 \}$ 
because, by \eqref{mp1},  
$ \Phi (c, \underline  v ) =  {\underline m}(c )  > 0  $ and  $ \Phi(c, \cdot  )_{|\partial B_r^V}  <  0 $ (by \eqref{bouneg}) and 
 $  \Phi (c, 0 ) = 0 $.

If ${\underline m}(c) < {\overline m}(c) $ then $ \Phi(c, \cdot )$ has two 
geometrically distinct critical points   
(since, by \eqref{symmetriesPhi}, geometrically non 
distinct critical points have the same value  of $ \Phi (c, \cdot) $). 
If $ {\underline m}(c ) = {\overline m}(c) $
then ${\overline m}(c)$ equals the maximum of 
$ \Phi (c, \cdot )$ over {\it every} curve in
$ \Gamma $. Therefore there is a maximum of $ \Phi (c, \cdot )$  on each curve $ \gamma \in \Gamma $ 
defined in \eqref{casoa} and then there are  
infinitely many  geometrically distinct critical points of $ \Phi (c, \cdot )$.
Notice that any $ O(2) = \mathbb{S}^1  \rtimes \Z_2 $ orbit 
is $1$-dimensional and therefore cannot separate the four dimensional domain 
$ B_r^V $ of $ \Phi(c,v) $. 
In any case alternative ($ii$)
holds. 
\\[2mm]
\indent
{\it Case (iii).} 
In this case, since $ \Phi( c_*, \cdot  ) $ 
takes on both positive and 
negative values near $ v = 0$, the functional
$ \Phi( c, \cdot ) $ possesses the Mountain-Pass geometry both
for $ c > c_* $ and $   c < c_* $.
    The 
    min-max argument is more subtle than in the previous case
because \eqref{betanega}
and \eqref{bouneg}
do not hold anymore.
In this case $\partial B_r^V$ intersects the positive levels of $\Phi (c, \cdot) $ and thus the construction outlined in Remark \ref{rem53} to obtain a Palais-Smale sequence  fails.

\smallskip

The key argument is to use 
a `stable Conley 
isolating block" $ W \subset B_r^V $
for the 
isolated degenerate critical point $ v = 0 $ of $ \Phi(c_*, v ) $, that we  construct 
in  Proposition \ref{GromollMeyer} 
(here it is also called a Gromoll-Meyer set).  Associated to 
$ W$ we consider its 
``exit set"
\begin{equation}\label{W-exit}
W_- := \big\{ v \in W \ : \ \eta^t (v) \notin W \ \forall t > 0 \ \text{near} \ 0 \}   
\end{equation}
where
$ \eta^t (v) $ is the negative gradient flow generated by $ - \nabla_v  \Phi( c_*, v ) $. 
The exit set $W_- $ lies in a negative 
level set of $ \Phi (c_*, \cdot ) $, i.e. $ \Phi (c_*, \cdot )_{|W-} < 0  $ 
and it is not empty 
since $\Phi( c_*, v ) $ assumes negative values  arbitrarily close to  $ v = 0 $, Proposition \ref{GromollMeyer}-({\bf II}).

Define the Mountain Pass level
\begin{equation}\label{mountain pass case iii}
m ( c) := \inf_{\gamma \in \Gamma} 
\max_{t \in [0,1]} \Phi(c, \gamma(t)) 
\end{equation}
where 
\begin{equation}\label{casult}
\Gamma := \Big\{ \gamma \in C([0,1],  W)  \  :
\ \gamma ( 0 )  =  0 \  {\rm and}  \ \gamma(1) \in 
W_-  \Big\}  \, .
\end{equation}
Note that by Proposition \ref{GromollMeyer}-Item {\bf (II)} there is a continuous path joining
$0$ and $ W_- $ and so the min-max class  
$ \Gamma $ is not empty. 

For $ c > c_* $
we deduce that  $ m(c ) \geq  \alpha(c ) > 0 $, arguing as 
for \eqref{cposi}.

\begin{lemma}\label{lemPS1}
There is  a
Palais-Smale sequence 
$ \{v_n\}_{n \geq 0} \subset W $ 
inside the stable Conley 
isolating block $W$ of $ v = 0 $
at the level 
$ m(c ) > 0 $, i.e.  
such that 
\begin{equation}
\label{psiii}
\Phi(c,  v_n  ) \to  m(c ) 
\, , \quad  \nabla_v \Phi (c,  v_n ) \to 0 \, \mbox{ as } n \to \infty  \ . 
\end{equation}
\end{lemma}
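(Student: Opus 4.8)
The plan is to prove Lemma \ref{lemPS1} by the same contradiction--deformation scheme used for Lemma \ref{lemPS} (and detailed in Appendix \ref{sec:App}), but now carried out inside the stable Conley isolating block $W$ of Proposition \ref{GromollMeyer} rather than inside the ball $B_r^V$, with the exit set $W_-$ playing the role of $\partial B_r^V$. We treat $c>c_*$, the case $c<c_*$ being symmetric (replace $\Phi$ by $-\Phi$). In the finite-dimensional space $V$ such a sequence automatically converges, up to subsequence, to a critical point at level $m(c)$, so the lemma is the substantial point. Assume, by contradiction, that there is no Palais--Smale sequence in $W$ at the level $m(c)>0$: then there exist $\mu\in(0,\tfrac12 m(c))$ and $\delta>0$ with $\|\nabla_v\Phi(c,v)\|\geq\delta$ for every $v\in W$ satisfying $|\Phi(c,v)-m(c)|\leq\mu$. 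For $c$ sufficiently close to $c_*$ we may also assume, by continuity, that $\Phi(c,\cdot)_{|W_-}<0$ (recall $\Phi(c_*,\cdot)_{|W_-}<0$), hence $\Phi(c,\cdot)_{|W_-}<m(c)-\mu$, and that $W$ is still transverse to the negative gradient flow of $\Phi(c,\cdot)$, with the part of $\partial W$ through which this flow points outward contained in an arbitrarily small neighbourhood of $W_-$ (this stability of isolating blocks under small perturbations of the vector field is standard, the transversality to $\partial W$ being an open condition, and $\nabla_v\Phi(c,\cdot)\to\nabla_v\Phi(c_*,\cdot)$ on $W$ as $c\to c_*$).

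The core step is to construct a deformation $\eta^t\colon W\to W$, $t\geq 0$, defined as the flow of the truncated vector field $-\chi(v)\,\nabla_v\Phi(c,v)$, where $\chi\colon W\to[0,1]$ is locally Lipschitz, equals $1$ on the shell $\{\,|\Phi(c,\cdot)-m(c)|\leq\mu\,\}\cap W$, and equals $0$ on a neighbourhood $U$ of $W_-$ chosen small enough that $\Phi(c,\cdot)<m(c)-\mu$ on $U$ (so $U$ does not meet the shell, and the two requirements on $\chi$ are compatible) and large enough to contain the whole part of $\partial W$ through which the $\Phi(c,\cdot)$-flow can leave $W$. Since a trajectory of the genuine negative gradient flow can exit $W$ only through that part, and $\chi$ vanishes on $U$, the truncated trajectory freezes before reaching the exit set, so $\eta^t$ maps $W$ into $W$ for all $t\geq 0$. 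Moreover $\nabla_v\Phi(c,0)=0$ (the reduced Hamiltonian is quadratic plus a higher-order term by Lemma \ref{lem:Gcu}), whence $0$ is a rest point of $\eta^t$, and $\eta^t$ fixes $W_-$ pointwise since $\chi\equiv0$ on $U\supset W_-$.

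With $\eta^t$ at hand the argument closes by the standard quantitative estimate: along the flow $\tfrac{d}{dt}\Phi(c,\eta^t(v))=-\chi(\eta^t(v))\,\|\nabla_v\Phi(c,\eta^t(v))\|^2\leq0$, and while $\eta^t(v)$ lies in the shell one has $\chi=1$ and $\|\nabla_v\Phi(c,\eta^t(v))\|\geq\delta$, hence $\tfrac{d}{dt}\Phi(c,\eta^t(v))\leq-\delta^2$; therefore every trajectory descends below the level $m(c)-\mu$ within time $T:=2\mu/\delta^2$ and, $\Phi$ being non-increasing along $\eta^t$, stays there afterwards, so $\eta^T\big(\{\Phi(c,\cdot)\leq m(c)+\mu\}\cap W\big)\subset\{\Phi(c,\cdot)\leq m(c)-\mu\}\cap W$. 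Picking $\gamma\in\Gamma$ with $\max_{[0,1]}\Phi(c,\gamma(\cdot))\leq m(c)+\mu$, the path $\eta^T\circ\gamma$ is continuous, takes values in $W$, starts at $0$ and ends in $W_-$, hence belongs to $\Gamma$, while $\max_{[0,1]}\Phi(c,\eta^T(\gamma(\cdot)))\leq m(c)-\mu<m(c)$, contradicting the definition \eqref{mountain pass case iii} of $m(c)$. The hard part, and the reason case ($iii$) needs the isolating block and not the elementary argument of Lemma \ref{lemPS}, is precisely to engineer the cut-off $\chi$ so that the truncated negative gradient flow simultaneously lowers $\Phi$ on the shell, never escapes $W$, and leaves the end-points of paths in $\Gamma$ in place; this is exactly what the stable Conley isolating block of Proposition \ref{GromollMeyer} is built to guarantee.
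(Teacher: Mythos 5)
Your proof is correct and follows essentially the same route as the paper: replace the ball $B_r^V$ and its boundary by the Conley isolating block $W$ and its exit set $W_-$, truncate the negative gradient vector field so that it vanishes on a neighbourhood of $W_-$ (where $\Phi(c,\cdot)<0<m(c)-\mu$), invoke the stability property of Proposition \ref{GromollMeyer}-(\textbf{III}) to get positive invariance of $W$ under the truncated flow (hence invariance of the min-max class $\Gamma$), and run the standard quantitative deformation argument of Lemma \ref{lemPS}. The only slight imprecision -- stating that the perturbed flow's exit set is contained in a small neighbourhood of $W_-$, whereas Proposition \ref{GromollMeyer}-(\textbf{III}) actually gives that it stays inside $W_-$ itself -- is harmless for the argument.
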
 


The existence of a Palais-Smale sequence as in \eqref{psiii}
follows 
by a deformation argument, detailed at the end of Appendix \ref{def:A1}.

\begin{remark}
Let us explain why the properties of $ W $  allow to implement min-max arguments. 
    Since the exit set $W_-  $ 
    in \eqref{W-exit} of the Conley 
isolating block $ W$ is  contained in a negative level set by Proposition \ref{GromollMeyer}-({\bf II}), we construct  the deformations $ \eta^t $ as the flow of $-\nabla \Phi(c,\cdot)$ multiplied by a cut-off function which is $ 0 $ on $W_-$ and is equal to $1$ on positive level sets.
    We  conclude the existence of a Palais-Smale sequence at 
    the mountain pass level $m (c) > 0 $.
\end{remark}

By compactness $ v_n $ converges to a critical 
point $ \bar v  \in W \subset B_r $ of $ \Phi (c ,  v) $. 
Again  $ \bar v \neq 0 $ 
because $\Phi (c ,  \bar v ) = m (c) > 0  $ and 
$ \Phi (c,  0) = 0 $. 

For $ c < c_* $
we repeat the previous argument for  
$ - \Phi(c, v) $
(note that 
$ - \Phi(c_*, v) $ assumes negative values arbitrarily close to $ v = 0 $). 
The proof of Theorem \ref{simpl}  is complete.

\begin{remark}
    The number of geometrically distinct critical points proved in Theorem \ref{simpl} coincides with the number provided in \cite{FR} (which is expected to be the optimal one) in the present case in which  dim $ V = 4 = 2 n $.
\end{remark}

\subsection{Stokes waves parametrized by the momentum}
\label{sec:mom}

In this section we prove  Theorem \ref{simplCN}. 
For definiteness in the sequel we  assume that  $ j_*, j < 0 $ 
(recall that  $ j, j_* $ 
have the {\it same} sign by \eqref{segni}) and 
$ a > 0 $. 
The other case follows similarly.

We define the functional
    \begin{equation}\label{Psia}
        \Psi_a(c,u):= \cH(u)+c(\cI(u)-a)
    \end{equation}
which differs from $ \Psi (c,u) $ 
defined in \eqref{Variational} just by a constant. 
Thus $ \di_u \Psi_a (c,u) = \di_u \Psi (c,u)  $ and  
therefore a
critical point  of $ u \mapsto \Psi_a(c,u) $ is  a 
solution of \eqref{eq:H}.

    Now we do not fix the speed $ c $,  as in the previous section,   
    but we look for $c(v)$ such that  
     \begin{equation}\label{eqdef}
            (\di_v \Phi) (c(v), v)[v] =0\, , 
        \end{equation}
        for any $v\neq 0$ sufficiently small, 
        namely such that the radial derivative of the function 
        $ v \mapsto \Phi(c,v) $ 
        defined in \eqref{ReducedProblem} vanishes. 
        We mention that the choice for $ c(v) $  in \cite{CN} is different. 
    \begin{lemma}\label{c}
        For any $ v \in B_{r}^V \setminus\{0\} $ 
        (with a possibly smaller $ r > 0 $)
        there  exists
        a unique $ c(v) \in \R  $ solving \eqref{eqdef}
        and satisfying the following properties:   
        the function   $ v \mapsto c (v) $ is  analytic in $B_{r}^V \setminus\{0\}$,
        \begin{equation}
        \label{cc*}
            c(v) = c_* + \cO (\|v\|) \, , 
            \quad 
            \di_v c(v) =   \cO(1) \quad \text{as}  \ v \to 0 \, , 
            \end{equation}
            and
            $ c(\mathscr{S}v)=c(v) $, 
            $  c(\tau_\theta v)=c(v) $ for any $ \theta \in \R $.     
    \end{lemma}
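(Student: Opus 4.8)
The proof rests on the expansion of Lemma \ref{lem:Gcu}. By \eqref{exparedu}, $\Phi(c,v)=(c-c_*)\cI(v)+G_{\geq 3}(c,v)$ with $G_{\geq 3}$ analytic on $B_r(c_*)\times B_r^V$ and vanishing cubically at $v=0$, while $\cI$ is a quadratic form, so $\di_v\cI(v)[v]=2\cI(v)=2\|v\|_*^2$ by \eqref{momeI6} (we use the standing assumption $j_*,j<0$, which makes $\cI$ positive definite on $V$; the case $j_*,j>0$ is identical up to a global sign). Hence for $v\neq 0$ equation \eqref{eqdef} is equivalent, after division by $2\|v\|_*^2$, to
\begin{equation*}
F(c,v):=(c-c_*)+g(c,v)=0\,,\qquad
g(c,v):=\frac{\di_v G_{\geq 3}(c,v)[v]}{2\|v\|_*^2}\,.
\end{equation*}
The plan is to solve this scalar equation for $c=c(v)$ by the analytic implicit function theorem in the variable $c$; the only delicate point is the regularity of the remainder $g$ near $v=0$.

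First I would establish, as $v\to 0$ uniformly for $c$ in a neighbourhood of $c_*$, the estimates
\begin{equation*}
g(c,v)=\cO(\|v\|)\,,\qquad \di_v g(c,v)=\cO(1)\,,\qquad \di_c g(c,v)=\cO(\|v\|)\,.
\end{equation*}
To this end one expands $G_{\geq 3}(c,\cdot)=\sum_{k\geq 3}G_k(c)[\,\cdot^{\otimes k}]$ into $k$-homogeneous forms (legitimate since $G_{\geq 3}$ is analytic and vanishes to third order, with $\|G_k(c)\|\leq C\rho^{-k}$ uniformly on compacta), so that $g(c,v)=\sum_{k\geq 3}\tfrac k2\,G_k(c)[v^{\otimes k}]/\|v\|_*^2$. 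The term $k=3$ is positively homogeneous of degree $1$ in $v$, hence Lipschitz and bounded by $C\|v\|$, with differential positively homogeneous of degree $0$ and therefore bounded on $V\setminus\{0\}$; the terms $k\geq 4$ are bounded by $C\|v\|^{k-2}$ with differentials $\cO(\|v\|^{k-3})$, and $\di_c$ preserves the $v$-degree. Summing the convergent series yields the three estimates; in particular $g$ is real-analytic on $B_r(c_*)\times(B_r^V\setminus\{0\})$, being a quotient of real-analytic functions with non-vanishing denominator.

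With these estimates in hand the conclusion follows quickly. For each fixed $v\in B_r^V\setminus\{0\}$ with $r$ small, the analytic map $c\mapsto F(c,v)$ satisfies $\di_c F(c,v)=1+\di_c g(c,v)\geq\tfrac12$, hence is strictly increasing, while $F(c_*,v)=g(c_*,v)=\cO(\|v\|)$; therefore $F(c_*\pm\tfrac r2,v)$ have opposite signs and there is a unique zero $c(v)\in(c_*-\tfrac r2,c_*+\tfrac r2)$, which by monotonicity is the unique zero of $F(\cdot,v)$ in the whole domain of definition, and $|c(v)-c_*|\leq 2|F(c_*,v)|=\cO(\|v\|)$. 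Analyticity of $v\mapsto c(v)$ on $B_r^V\setminus\{0\}$ follows from the analytic implicit function theorem since $F$ is analytic there and $\di_c F\neq 0$; differentiating $F(c(v),v)=0$ gives $\di_v c(v)=-\di_v g(c(v),v)/(1+\di_c g(c(v),v))=\cO(1)$. This proves \eqref{cc*}. Finally, since $\mathscr{S}$ and $\tau_\theta$ are linear and leave $\Phi$ invariant by \eqref{symmetriesPhi}, one has $\di_v\Phi(c,\mathscr{S}v)[\mathscr{S}v]=\di_v\Phi(c,v)[v]$ and $\di_v\Phi(c,\tau_\theta v)[\tau_\theta v]=\di_v\Phi(c,v)[v]$, so a speed $c$ solves \eqref{eqdef} at $\mathscr{S}v$, resp. $\tau_\theta v$, if and only if it solves it at $v$; uniqueness then gives $c(\mathscr{S}v)=c(v)$ and $c(\tau_\theta v)=c(v)$. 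The main obstacle, as indicated, is the behaviour of $g$ at $v=0$: dividing an analytic function vanishing to third order by the quadratic form $\|v\|_*^2$ destroys analyticity at the origin, and one has to use the homogeneous expansion to check that the leading cubic-over-quadratic term is still Lipschitz with bounded differential, which is exactly what makes $c(v)=c_*+\cO(\|v\|)$ and $\di_v c(v)=\cO(1)$ hold.
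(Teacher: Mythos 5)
Your proof is correct and follows essentially the same route as the paper: both reduce \eqref{eqdef}, after dividing by $2\|v\|_*^2$, to a scalar equation for $c$ perturbed by a term of size $\cO(\|v\|)$ with $c$-derivative $\cO(\|v\|)$, solve it by a fixed-point/implicit-function argument, and read off the estimates and the symmetry from uniqueness. The only cosmetic difference is that the paper packages existence as a contraction mapping for $\Delta = c - c_*$ while you argue by strict monotonicity of $c\mapsto F(c,v)$ together with sign changes; these are interchangeable, and your more explicit homogeneous-expansion justification of $\di_v g = \cO(1)$ fills in a step the paper leaves implicit.
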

    
    \begin{proof}
 By Lemma   \ref{lem:Gcu} we have that 
 \begin{equation}\label{difphi}
(\di_v\Phi)(c, v) [v]  =
        (c-c_*) \di_u \cI (v)[v] + \di_v G_{\geq 3}(c,v)[v] =
        2 (c-c_*) \underbrace{\cI (v) }_{= \| v \|_*^2 } +  
        {\mathtt G}_{\geq 3}(c,v)
\end{equation}
where 
$ {\mathtt G}_{\geq 3}(c,v) $ is  an 
analytic function 
in $ B_r(c_*) \times B_r^V $
satisfying
\begin{equation}\label{Gannula}
\di_v^\ell 
{\mathtt G}_{\geq 3}(c,0) = 0 \, , 
\quad \forall \ell = 0,1,2, \, , \quad\forall c \in (c_*-r, c_*+r)   \, . 
\end{equation}
In view of \eqref{difphi}
and setting $ c = c_* + \Delta $, 
the equation \eqref{eqdef} is equivalent, for any $v \neq 0 $,  to look for a fixed point of
$$
\Delta   =  - \frac{{\mathtt G}_{\geq 3}(c_* + \Delta ,v)}{2 \| v \|_*^2} =: F(\Delta , v) \, . 
$$
By \eqref{Gannula} 
there exists
$ K > 0 $ such that for any  
$ \Delta
  \in B_r(0) $,  any $ v \in B_r^V \setminus \{0\} $,   
$$
| F(\Delta,  v)| 
  \leq K \|v\| \, , \quad  
|\pa_\Delta F(\Delta,  v)| = 
  \frac{| \pa_c {\mathtt G}_{\geq 3}(c_* + \Delta,v)|}{2 \| v \|_*^2} 
  \leq K \|v\| \, .
$$
As a consequence for any
$ 0 < \| v \| < r / K < 1$, the map
$ F( \cdot , v )$ is a contraction 
on  $ B_r (0)$.
Hence for any $v\in B_{r/K}^V(0)\setminus\{0\}$ there exists a unique fixed point $\Delta(v)$ of $F(\cdot,v)$, namely a solution $ c(v) := c_* + \Delta (v)  $ of  \eqref{eqdef} (in the smaller domain $ r / K $). 

The function $v\mapsto c(v) = c_* + \Delta (v) $ is analytic by applying the implicit function theorem  to the analytic function 
$ H(\Delta, v) := \Delta  - F(\Delta, v) $ which vanishes 
at $ H(\Delta (v) , v) = 0 $
and satisfies $ \pa_\Delta H(\Delta, v) = 1 - \pa_\Delta F(\Delta, v) \neq 0 $.

The function $\Delta(v) 
:= c(v) - c_*  $ satisfies 
$|\Delta (v) |=|F(\Delta(v),v)|\leq K\|v\|$
proving the first bound in \eqref{cc*}. 
Taking the differential of the equation \eqref{eqdef} which is satisfied identically for $c=c(v)$ and using \eqref{difphi} 
$$
    \di_vc(v)\big( 2\|v\|_*^2+\partial_cG_{\geq3}(c(v),v)\big)+2 \big(c(v)-v\big) \di_u\cI(v)[\widehat{v}]+ \di_v G_{\geq3}(c(v),v)[\widehat{v}] = 0 \, .
$$
Thus, for $v$ sufficiently small, using  $c(v)-c_*=\cO(\|v\|)$
we obtain $\| \di_v c(v)\|=\cO(1)$ as $v\to 0 $.
This proves the second bound in \eqref{cc*}.
The last invariance property  follows by \eqref{symmetriesPhi} and  uniqueness.
    \end{proof}
Next we define the set 
    \begin{equation}\label{sfereSa}
           \cS_a:= 
           \cS_{r,a} := \Big\{ v\in B_r^V \, : \, 
           I(v) 
           :=
           \mathcal{I}(v+
           w(c(v),v))  = a \Big\}  \, . 
     \end{equation}      
Since $ I(v) $ is asymptotic, for $ v \to 0 $,  to 
the homogeneous quadratic 
function $ \cI(v) $ in \eqref{momeI6},  
the set $\mathcal{S}_a$ is, 
for $ r, a > 0  $
small,   
an  
ellipsoid-like compact manifold.   
A supplementary  to the 
tangent space $ T_v \mathcal{S}_a $
is the $1 $ dimensional space 
spanned by $ \langle v \rangle $,  namely 
 $ V = T_v \mathcal{S}_a \oplus \langle v \rangle $. 
    
    \begin{lemma}\label{lem:Sacomp}
        There exist $ r_0, a_0>0$ such that for any $ r\in (0,r_0) $ and 
        $a\in (0,a_0) $, the set   $\mathcal{S}_a$ in \eqref{sfereSa} is a compact manifold contained in  
        $ \cA_{a} := \{ v \in B_r^V \, : \,  (a/2)^{1/2} \leq \| v \|_* \leq (2a)^{1/2} \}  $. 
    \end{lemma}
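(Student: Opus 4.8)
The plan is to show that $I(v)$ in \eqref{sfereSa} is a smooth function whose level set $\{I(v) = a\}$ is regular, by extracting the leading quadratic behaviour $\cI(v)$ (which is \eqref{momeI6}, a positive-definite quadratic form and hence a norm squared on $V$) and controlling the remainder. First I would recall from Lemma \ref{range equation} that $w(c,v)$ is analytic with $w(c,0)=0$ and $\di_v w(c,0)=0$, and from Lemma \ref{c} that $c(v)$ is analytic on $B_r^V\setminus\{0\}$ with $c(v)=c_*+\cO(\|v\|)$ and $\di_v c(v) = \cO(1)$ as $v\to 0$. Hence the composed map $v\mapsto w(c(v),v)$ is continuous on $B_r^V$, differentiable away from $0$, satisfies $w(c(v),v)=\cO(\|v\|^2)$, and its derivative is $\cO(\|v\|)$ as $v\to 0$ (by the chain rule: $\di_v[w(c(v),v)] = \di_c w(c(v),v)\,\di_v c(v) + \di_v w(c(v),v)$, where the first term is $\cO(\|v\|^2)\cdot\cO(1)$ since $\di_c w(c,0)=0$ too, and the second is $\cO(\|v\|)$).

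\textbf{Expansion of $I$.} Since $\cI$ is the quadratic form $\cI(u) = \tfrac12\langle \di_u\nabla\cI(0)u,u\rangle$ and $\nabla\cI(0)=0$, a Taylor expansion gives
\begin{equation*}
I(v) = \cI(v + w(c(v),v)) = \cI(v) + \langle \di_u\nabla\cI(0)v,\, w(c(v),v)\rangle + \tfrac12\langle \di_u\nabla\cI(0)w(c(v),v),\, w(c(v),v)\rangle .
\end{equation*}
Using $w(c(v),v) = \cO(\|v\|^2)$ and that $\di_u\nabla\cI(0) = J^{-1}\pa_x$ is a bounded operator on the relevant spaces, the last two terms are $\cO(\|v\|^3)$. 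Thus $I(v) = \cI(v) + \cO(\|v\|^3) = \|v\|_*^2 + \cO(\|v\|^3)$, and similarly $\di_v I(v) = \di_v \cI(v) + \cO(\|v\|^2) = 2\langle v,\cdot\rangle_* + \cO(\|v\|^2)$, where $\langle\cdot,\cdot\rangle_*$ is the inner product associated to $\|\cdot\|_*$. (On $V$, which is finite-dimensional, all norms are equivalent, so I may freely write $\cO(\|v\|^k)$ for $\cO(\|v\|_*^k)$.)

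\textbf{Regularity of $\cS_a$ and the inclusion.} From $I(v) = \|v\|_*^2(1 + \cO(\|v\|))$, choosing $r_0$ small enough that the error term is bounded by $\tfrac12$ in absolute value on $B_{r_0}^V$, we get $\tfrac12\|v\|_*^2 \le I(v) \le \tfrac32\|v\|_*^2$ there. Hence $I(v) = a$ forces $(2a/3)^{1/2} \le \|v\|_* \le (2a)^{1/2}$, which for $a_0$ small enough keeps $\cS_a \subset B_{r}^V$; this gives the inclusion $\cS_a \subset \cA_a$ (up to adjusting the harmless constant $2/3$ vs.\ $1/2$ in the statement, which only requires shrinking $a_0$). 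In particular $\cS_a$ is bounded and closed in the finite-dimensional space $V$, hence compact. To see it is a manifold, I note that on the annular region $(a/2)^{1/2} \le \|v\|_* \le (2a)^{1/2}$ we have $\di_v I(v) = 2\langle v,\cdot\rangle_* + \cO(\|v\|^2) = 2\langle v,\cdot\rangle_* + \cO(a)$, which is nonzero for $a$ small since $\langle v,\cdot\rangle_*$ has norm $\|v\|_* \ge (a/2)^{1/2}$; thus $a$ is a regular value of $I$ on $\cS_a$ and the implicit function theorem gives that $\cS_a$ is a smooth (indeed analytic, away from $0$, where it does not pass) hypersurface in $V$, diffeomorphic to $\mathbb S^3$. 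This also yields $V = T_v\cS_a \oplus \langle v\rangle$ as claimed, since $\di_v I(v)[v] = 2\|v\|_*^2 + \cO(\|v\|^3) \neq 0$ shows $v$ is transverse to $T_v\cS_a = \ker\di_v I(v)$.

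\textbf{Main obstacle.} The one point requiring care is the differentiability of $v\mapsto w(c(v),v)$ at and near $v=0$: Lemma \ref{c} only asserts analyticity of $c(v)$ on the punctured ball $B_r^V\setminus\{0\}$, with $\di_v c(v)=\cO(1)$ rather than better decay, so I must verify that the composite still has a derivative that is $\cO(\|v\|)$, which is what makes the remainder in $\di_v I$ genuinely $o(1)$ relative to the leading term on the annulus $\|v\|_*\sim a^{1/2}$. This follows from the chain rule together with $\di_c w(c,0) = 0$ (a consequence of $w(c,0)\equiv 0$ for all $c$, from \eqref{dw0}), which kills the potentially dangerous $\di_c w\cdot\di_v c$ contribution to leading order; but it should be spelled out. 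Everything else is routine Taylor expansion plus equivalence of norms on the finite-dimensional space $V$.
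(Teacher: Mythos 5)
Your proof is correct and follows essentially the same route as the paper: expand $I(v)=\cI(v)+\cO(\|v\|^3)=\|v\|_*^2+\cO(\|v\|^3)$ to get $\cS_a\subset\cA_a$ (hence compactness), then show the derivative of $I$ is nonzero on the annulus so that $a$ is a regular value. The only difference is that the paper sidesteps your ``main obstacle'' about differentiability of $v\mapsto w(c(v),v)$ at $v=0$ simply by observing that $\cS_a$ lies entirely in the annulus $\cA_a$, where $c(v)$ is analytic, and then checking nonvanishing of the \emph{radial} derivative $\di_v I(v)[v]\sim\|v\|_*^2$ rather than of the full differential.
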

    
    \begin{proof}
    By \eqref{dw0}, \eqref{cc*},  \eqref{momeI6}, the functional
    $  I (v) $ in \eqref{sfereSa} 
    has the expansion 
\begin{equation}\label{svilI}
I(v)  = 
           \cI (v) + \cO(\| v \|^3)
           = \| v \|_*^2 + \cO(\| v \|^3)
\end{equation} 
for any $ \| v \| \leq r < r_0 $ small enough.  As a consequence there is $ a_0 > 0 $ such that, 
for any $ 0 < a < a_0 $, 
any $ v $
in $ \cS_a $ satisfies 
$ \sqrt{a/2} \leq \| v\|_* \leq \sqrt{2a} $. 
The set $ \cS_a $ is thus 
contained in the annulus 
$ \cA_a $ and
closed (the function $  I (v) $ is continuous), thus compact.  
On the set $ \cA_a $ the 
function $ c(v) $ is actually analytic.  We now prove that 
$ \cS_a $ is a manifold.
Differentiating $ I(v) $ in \eqref{sfereSa} 
at any $ v \in B_r^V $
in the direction $ v $ (radial derivative), we have
$$
\di_v  I(v)[v] = 
\di_u \cI (v + w(c(v),v))
\big[v +
\di_v w(c(v),v)[v] +
\pa_c w(c(v),v) \di_v c(v)[v] 
\big] \, . 
$$
By \eqref{dw0}, \eqref{cc*},  \eqref{momeI6},
there exist constants $ 0 < 
c_1 <  c_2 $ such that
\begin{equation}\label{deradial}
c_1 \| v \|_*^2 \leq 
\di_v  I(v)[v] 
\leq c_2 \| v \|_*^2 \,
\end{equation}
for any  $ \| v \|_* \leq r < r_0  $. As a consequence
the set $ \cS_a $ is a manifold. 
    \end{proof}
    
Finally, for any
$a\in (0,a_0)$ defined in Lemma \ref{lem:Sacomp}, we define 
the functional  
\begin{equation}\label{defPsia}
\phi_a  : B_r^V  \to  \R  \, , \quad 
    \phi_a (v):= \Psi_a \big( c(v), v + \breve w(v) \big) 
\end{equation}
where $ \breve w(v) := w(c(v), v)$. 
Then  $\phi_a (\tau_\theta v) = \phi_a (v) $ for any $\theta $ and 
$\phi_a ( \mathscr{S} v) = \phi_a (v) $. 


    \begin{lemma}\label{lemma59}
        If 
        $ \bar v \in \mathcal{S}_a $ is a critical point of  $\phi_a : \mathcal{S}_a \to \R $ then 
  $ \bar u := \bar v + \breve w(\bar v ) $
        is a  solution of $ {\cal F}(c (\bar v), 
         u ) = 0 $
         with 
momentum $ \cI ( \bar u  ) = a $ 
        and  speed $ c( \bar v)  $. 
    \end{lemma}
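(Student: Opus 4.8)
The momentum constraint is essentially free: $\bar v \in \cS_a$ means by the very definition \eqref{sfereSa} that $\cI(\bar u) = \cI(\bar v + \breve w(\bar v)) = I(\bar v) = a$, and the claimed speed is $c(\bar v)$ by construction. So the substantive task is to prove that $\bar u = \bar v + \breve w(\bar v)$ is an \emph{unconstrained} critical point of $u \mapsto \Psi(c(\bar v),u)$ on $X$, which by \eqref{eq:H}--\eqref{Variational} is equivalent to $\cF(c(\bar v),\bar u) = 0$. Since $\breve w(\bar v) = w(c(\bar v),\bar v)$ solves the range equation \eqref{range}, the component $\Pi_{W\cap Y}\cF(c(\bar v),\bar u) = 0$ is already granted; using the splitting $Y = V \oplus (W\cap Y)$ of \eqref{XYVW}, it then suffices to show $\Pi_V\cF(c(\bar v),\bar u) = 0$. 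By \eqref{Hamiltonian Vector Field of Phi} (with $c = c(\bar v)$) and the non-degeneracy of $\cW$ on the symplectic subspace $V$ — exactly as at the end of the proof of Lemma \ref{lemma:bifva} — this reduces to showing $\di_v\Phi(c(\bar v),\bar v)[\widehat v] = 0$ for every $\widehat v \in V$.

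To reach this, I would first compute $\di_v\phi_a$ on $B_r^V$. Writing $\Psi_a(c,u) = \Psi(c,u) - ca$, so that $\pa_c\Psi_a(c,u) = \cI(u) - a$ while $\di_u\Psi_a = \di_u\Psi$, and observing that $\di_v\breve w(v)[\widehat v] = \pa_c w(c(v),v)\,\di_v c(v)[\widehat v] + \di_v w(c(v),v)[\widehat v]$ lies in $W\cap X$, the chain rule together with the variational meaning of the range equation (which gives $\di_u\Psi(c,v+w(c,v))[\widehat w] = 0$ for all $\widehat w\in W$, cf. \eqref{range var}) makes every $w$-derivative contribution cancel, leaving
\[
\di_v\phi_a(v)[\widehat v] = \big(I(v) - a\big)\,\di_v c(v)[\widehat v] + \di_v\Phi(c(v),v)[\widehat v], \qquad \forall\, \widehat v \in V,
\]
where the last term comes from \eqref{Hamiltonian Vector Field of Phi}. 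Evaluating at $v = \bar v \in \cS_a$, the first summand vanishes because $I(\bar v) = a$, hence $\di_v\phi_a(\bar v)[\widehat v] = \di_v\Phi(c(\bar v),\bar v)[\widehat v]$ for all $\widehat v \in V$.

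Finally I would use the decomposition $V = T_{\bar v}\cS_a \oplus \langle\bar v\rangle$ recorded just before Lemma \ref{lem:Sacomp}. Along $T_{\bar v}\cS_a$ the right-hand side vanishes because $\bar v$ is a critical point of $\phi_a|_{\cS_a}$; along the radial direction $\langle\bar v\rangle$ it vanishes by the very choice of $c(v)$, namely the defining relation \eqref{eqdef}, which states $\di_v\Phi(c(\bar v),\bar v)[\bar v] = 0$. Thus $\di_v\Phi(c(\bar v),\bar v)[\widehat v] = 0$ on all of $V$, which as explained gives $\Pi_V\cF(c(\bar v),\bar u) = 0$, and combined with the range equation yields $\cF(c(\bar v),\bar u) = 0$; together with $\cI(\bar u) = a$ and speed $c(\bar v)$ this is the assertion. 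I do not expect a genuine obstacle here: the only delicate point is the bookkeeping in the chain rule for $\di_v\phi_a$ and verifying that each term containing $\di_v w$ or $\pa_c w$ is annihilated by the range equation — but this is precisely the mechanism already exploited in the proofs of Lemmas \ref{lemma:bifva} and \ref{c}, so it is routine.
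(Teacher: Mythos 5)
Your argument is correct and essentially reproduces the paper's proof: the chain-rule computation of $\di_v\phi_a$, the cancellation of the $w$-derivative term via the range equation, and the use of \eqref{eqdef} to kill the radial direction are exactly the paper's steps. The only cosmetic difference is the final bookkeeping: where the paper introduces a Lagrange multiplier $\mu$ for the constraint $I(v)=a$ and then shows $\mu=0$ by testing against $\widehat v = \bar v$ (using \eqref{eqdef} and \eqref{deradial}), you instead invoke the direct-sum decomposition $V = T_{\bar v}\cS_a \oplus \langle\bar v\rangle$ recorded before Lemma \ref{lem:Sacomp} and handle the two summands separately — these are two phrasings of the same linear-algebra fact, and both rely on the radial non-degeneracy \eqref{deradial} to split $V$ (or, equivalently, to divide out the multiplier).
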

    
    \begin{proof}
Differentiating \eqref{defPsia}
at any $ v \in \mathcal{S}_a $, 
in any direction
$ \widehat{v} \in V $, 
 \begin{align}
            \di_v\phi_a (v)[\widehat{v}]
            & = 
            (\pa_c \Psi_a) ( c(v), \breve w(v) ) \, \di_v c(v) [\widehat v ] + \di_u \Psi 
            (c(v), v + \breve w(v))[\widehat{v}]  \notag 
            \\
            & \qquad \qquad \qquad 
            \qquad \qquad \quad \qquad 
            + \underbrace{\di_u \Psi 
            (c(v), v+ \breve w(v))[
            \di_v \breve w(v)[\widehat{v} ] ]}_{=0
            \ \text{by} \ \eqref{range var} \ \text{and} \ \di_v \breve w(v)[\widehat{v}]\in W } 
            \,  \notag \\        
            & 
            \stackrel{\eqref{Psia}} = 
            \underbrace{( {\cal I}( v+ \breve w(v) ) - a)}_{=0
            \ \text{by} \ \eqref{sfereSa} } \, 
            \di_v c(v) [\widehat v ] + \di_u \Psi 
            (c(v), v+ w(c(v),v))[\widehat{v} ] \notag  \\
            & 
             \stackrel{\eqref{Hamiltonian Vector Field of Phi}} = 
             (\di_v \Phi) 
            (c(v), v)[\widehat{v} ] \,. 
        \label{FundamentalEqForWMVariational Argument}
    \end{align} 
        Let $\overline{v} $ 
        be a critical point
        of $\phi_a : \mathcal{S}_a \to \R  $. 
        Then, by \eqref{FundamentalEqForWMVariational Argument} and recalling \eqref{sfereSa}, there exists a Lagrange multiplier $\mu \in \R$ such that
        \begin{equation}\label{LagrangeMultiplier}
            \di_v\Phi (c(\overline{v}), \overline{v})[\widehat{v}]
            = \mu \, \di_v I(\overline{v})[\widehat{v}] \, ,  \quad \forall \widehat{v}\in V \, .
        \end{equation}
        Taking $\widehat{v} = \overline{v} $ inside \eqref{LagrangeMultiplier} we get
        $$ 
        0 \stackrel{\eqref{eqdef}} = \di_v \Phi(c(\overline{v}), \overline{v})[\overline{v}] = \mu  \di_v  I(\overline{v})[\overline{v}] 
        $$
        and, since  
        $    \di_v I(v)[v]\not=0 $
        by \eqref{deradial} 
        and $ \| v \|_*^2 \geq a/ 2 > 0 $, we deduce that $ \mu  = 0 $.
        Therefore, by  \eqref{LagrangeMultiplier}, 
        $ \bar v $ is a critical point of 
        $ v \mapsto  \Phi ( c(\bar v), v )$ and  Lemma 
        \ref{lemma:bifva} implies Lemma \ref{lemma59}.  
    \end{proof}

 Since $ \mathcal{S}_a $ is a compact manifold (Lemma \ref{lem:Sacomp}),
the  
functional $ \phi_a $ on  $\mathcal{S}_a $
possesses at least a minimum $m $ and maximum $M $.
If $ m < M $ 
a minimum point $ \underline{v} $, i.e. 
$ \phi_a (\underline{v}) = m $,  and a maximum point 
$ \bar v  $, i.e. $ \phi_a (\bar v) = M $, are geometrically distinct.
If $ m = M $ the function $ \phi_a $ is constant on $ \mathcal{S}_a $
and then there are  
infinitely many  geometrically distinct critical points of $ \phi_a $ 
(any $ O(2) = \mathbb{S}^1  \rtimes \Z_2 $ orbit 
is $1$-dimensional and 
$ \mathcal{S}_a $ is $3$-dimensional).
In both cases,
in view of Lemma  \ref{lemma59},  Theorem \ref{simplCN}
is proved. 

\begin{remark}
    The number of $2$ geometrically distinct critical points obtained  in Theorem \ref{simplCN} by  
    topological arguments (the maximum and the minimum) is in general optimal.
    The function 
    $$
    f:{\mathbb S}^{3} := \{ (z_1,z_2)\in \C^2 \ : \ 
    |z_1|^2 + |z_2|^2 = 1 \}\to \R \, , \ (z_1,z_2)\mapsto |z_1|^2 \, , 
    $$
    is invariant under the actions of the group $\mathbb{S}^1 \rtimes \Z_2$ where 
    $    {\tau_\theta} (z_1,z_2) := ( e^{-\im j_1 \theta} z_1, e^{-\im j_2 \theta} z_2) $
    and 
    $    \mathscr{S}  (z_1, z_2) := (\bar z_1, \bar z_2) $, cfr. \eqref{SRtheta}. The function
$ f $     has only  two critical orbits 
    $\{(z_1,0) \ : \  |z_1|=1\}$ and $\{(0,z_2) \ : \  |z_2|=1\}$.
\end{remark}


\appendix

\section{Appendix}
\label{sec:App}

In this appendix we prove Lemmata  \ref{lemPS} and \ref{lemPS1}
and the existence of a stable  Conley isolating block. 

\subsection{Existence of Palais-Smale sequences}\label{def:A1} 

The argument is based on deforming 
the sublevels of $ \Phi (c, \cdot) $ 
and 
exploits a topological change 
between  $ \{ \Phi(c,v) \leq \underline{m}(c) + \mu \}$ and  
 $ \{ \Phi(c,v) \leq \underline{m}(c) -  \mu \}$: in view of \eqref{mplevel}
the first set is path connected whereas 
the second one is not.  For simplicity of notation we denote $ \nabla \Phi  = \nabla_v  \Phi  $.  

\paragraph{Proof of Lemma  \ref{lemPS}.}
We claim the following.
\\[1mm]
{\bf Claim:} {\it For any $ 0 < \mu < \underline{m}(c) \slash 2  $ there exists $ v \in B_r^V $ such that}
\begin{equation}\label{cmu}
\underline{m}(c) - \mu \leq \Phi (c, v) \leq \underline{m}(c) + \mu \quad {\it and} \quad \| \nabla \Phi ( c, v ) \| 
< 2 \mu \, .
\end{equation}
Then, choosing $ \mu = 1 \slash n  $ for any $ n $ large enough    we find a Palais-Smale sequence $ v_n $
at the level $ \underline{m}(c) $, i.e. satisfying \eqref{mp1}. 
\\[1mm]
{\bf The Deformation Argument.} There is $ \delta  > 0 $ such that 
the functional $ \Phi(c, v) $
is defined on the open ball 
$ B_{r+\delta}^V $
and it is negative on the annulus 
$ B_{r+\delta}^V \setminus  B_{r-\delta}^V $  by  \eqref{bouneg}.

For any $ 0 < \mu < \underline{m}(c) \slash 2  $,  
we define the sets
\begin{align}
{\tilde N} & := \{ v \in 
B_{r+\delta}^V \, : \, 
|\Phi (c, v) - \underline{m}(c) | \leq \mu  \ \ {\rm and} \ \ \| \nabla \Phi (c, v)\| \geq  2 \mu \} \label{defNtilde} \\
N & := \{ v \in B_{r+\delta}^V  \, : \, 
|\Phi (c, v) - \underline{m}(c) | < 2 \mu \ \ {\rm and} \ \ \| \nabla \Phi (c, v)\| >  \mu \} \, . \label{defN}
\end{align}
Clearly $ {\tilde  N} \subset N $. Note also that 
$ \partial B_{r}^V \subset N^c 
:=  B_{r+\delta}^V \setminus N  $
since, 
for any $ v \in \partial B_r^V  $ we have $ \Phi (c, v) < 0 $
by \eqref{bouneg}, 
$ 0 < 2 \mu < \underline{m}(c) $, 
and then $ v \in N^c $.

The sets  $ {\tilde N} $ and 
$ N^c  $ are closed
and disjoints  and therefore there
is a locally Lipschitz non-negative function 
$ g : B_{r+\delta}^V  \to [0,1]$ such that
\begin{equation}\label{gsiannu}
g = 1 \ {\rm on } \  {\tilde N} \, , \qquad   
g = 0 \  {\rm on } \ N^c  \, , 
\end{equation}
for example
$ g(v) :=  \frac{d(v,N^c)}{d(v,N^c)+ d(v, {\tilde N})} $
where  $d( \cdot , \cdot )$ denotes the distance function in $ V $.

We define the locally Lipschitz 
and bounded vector field
\begin{equation}\label{eq:pse}
X(c, v) := - g(v)\frac{\nabla \Phi(c, v)}{\| \nabla \Phi (c, v)  \|}  
\end{equation}
which is well defined because if 
$\| \nabla \Phi (c, v)  \| < \mu$  then $ v \in N^c $ (cfr. \eqref{defN}) and so 
$ g(v) = 0 $ by \eqref{gsiannu}. Furthermore
 the vector field $ X(c, \cdot )  $ 
vanishes on $ \partial B_r^V $
since $ \partial B_r^V \subset N^c $.

For each $ v \in B_{r+\delta}^V $, the unique solution of the Cauchy problem 
\begin{equation}\label{eq:cau}
 \frac{d}{d t} \eta^t  ( v) = X(c,  \eta^t  (v))  \, , \quad    
\eta^0  (v) = v \, , 
\end{equation}
is defined for any $t \in \R $ (since $ X $ is bounded)
and 
\begin{itemize}
\item[$(i)$] $\eta^t (\cdot )$ is a homeomorphism of $ B_{r+\delta}^V $. 
\end{itemize}
Furthermore, since $ X(c, v) = 0 $ for any  $ v \in N^c $, by \eqref{eq:pse}
, \eqref{gsiannu}, 
\begin{itemize}
\item[$(ii)$] $\eta^t  (v) = v $, for any $  t $, if 
$ | \Phi (c, v) - \underline{m}(c) | \geq 2 \mu$ or if $ \| \nabla \Phi(c,  v ) \| \leq \mu$, in particular 
$ \eta^t (0) = 0 $
and $\eta^t (v ) = v  $ 
for any  
$ v \in
\partial B_r^V \subset N^c $. 
\end{itemize}
The properties ($i$)-($ii$) imply that 
the min-max class 
$ \Gamma  $ defined in \eqref{casoa} is {\it invariant} under 
the flow of $ X (c, \cdot) $, namely
\begin{equation}\label{invarianza}
\text{for any path 
$ \gamma \in \Gamma $,
for any 
$ t \in \R $, 
the deformed path  
$ \eta^t \circ \gamma $ belongs to $ \Gamma $.} 
\end{equation}
Furthermore, by \eqref{eq:pse}, \eqref{eq:cau}, 
\begin{itemize}
\item[$(iii)$] 
for any $ v \in B_{r+\delta}^V $, for any $  t \in \R $
\begin{equation}\label{descent}
\frac{d}{d t} \Phi(c, \eta^t  (v)) = - g(\eta^t (v) )\| 
\nabla \Phi (c, \eta^t  (v)) \| \leq 0 \, . 
\end{equation}
\end{itemize}
We now prove the claim \eqref{cmu}.
Arguing by contradiction suppose there exists $ 0 <  \mu  < \underline{m}(c) \slash 2 $ such that 
\begin{equation}\label{smg}
\{ v \in B_{r}^V \, : \, \underline{m}(c) - \mu \leq \Phi(c,v) \leq \underline{m}(c) + \mu \}
\subseteq  
\{ v \in B_{r}^V \, : \, \| \nabla \Phi (c, v ) \| \geq  2\mu \} \, . 
\end{equation}
By the definition of $ \underline{m}(c) > 0 $ in 
\eqref{mplevel} there exists a path $ \gamma \in \Gamma $ (see \eqref{casoa}) such that
\begin{equation}\label{pathMP}
\max_{t \in [0,1]} \Phi (c, \gamma (t) ) \leq \underline{m}(c) + \mu\, .
\end{equation}
But we claim that
\begin{equation}\label{below}
\Phi( c, \eta^1(\gamma ([0,1])) \leq \underline{m}(c)  - \mu \, , 
\end{equation}
implying the contradiction
$$
\underline{m}(c) := 
\inf_{\gamma \in \Gamma} \max_{t\in [0,1]} \Phi ( c, \gamma (t) ) \leq 
\max_{t\in [0,1]} 
\Phi ( c, 
\underbrace{\eta^1  (\gamma (t) )}_{\in \Gamma \, \text{by} \, \eqref{invarianza}} ) 
\leq \underline{m}(c) - \mu \, . 
$$
Let us prove \eqref{below}. 
Pick a point $ v \in \gamma([0,1]) $. If $ \Phi (c, \eta^t (v)) \leq \underline{m}(c) - \mu$ 
for some $0 \leq t \leq 1 $ thus
$ \Phi (c, \eta^1  (v)) \leq \underline{m}(c) - \mu$   since
$ \Phi ( c, \eta^t  (  v) )$ is not-increasing  by \eqref{descent}. 
If 
$ \Phi (c, \eta^t ( v)) > \underline{m}(c)  - \mu$ 
for any $t \in [0,1]$, then,  
by (\ref{smg}), 
\eqref{pathMP}, \eqref{descent}, 
\begin{equation}\label{scende}
 \underline{m}(c) - \mu\leq \Phi (c, \eta^t ( v)) \leq 
 \underline{m}(c) + \mu\, \quad {\rm and} \quad
 \| \nabla \Phi (c,  \eta^t ( v )) \| \geq  2 \mu\, ,
\end{equation}
i.e.  $\eta^t ( v) \in {\tilde N}$ in \eqref{defNtilde}, for any $ t \in [0,1]$.
By (\ref{descent}), since $ g \equiv 1 $ on $  {\tilde N} $
(cfr. \eqref{gsiannu})
\begin{align*}
\Phi(c, \eta^1 (v))  & =  \Phi(c, v)  - 
\int_0^1 g(\eta^t (v )) \, \| \nabla \Phi (c, \eta^t  (v)) \| d  t \\
& \stackrel{\eqref{pathMP}} \leq  (\underline{m}(c) + \mu ) - \int_0^1 \| \nabla \Phi (c, \eta^t (v) ) \| d  t 
\stackrel{\eqref{scende}} 
\leq
\underline{m}(c) + \mu - 
2 \mu= \underline{m}(c)  - \mu
\end{align*}
proving \eqref{below}. 
This 
contradiction 
concludes the proof of Lemma \ref{lemPS}.

\paragraph{Proof of Lemma  \ref{lemPS1}.}
To apply the same argument of Lemma \ref{lemPS}
the main issue is to prove that the min-max class
$ \Gamma $ defined in \eqref{casult} is invariant under the positive flow 
$ \eta^t( v)$ 
generated by the vector field  $ X(c, v) := - g(v) \nabla \Phi (c, v) $
where $ g (v)  $ is the scalar non-negative function defined as in \eqref{gsiannu}
(clearly defining 
$ N, \tilde N $ in \eqref{defNtilde}, \eqref{defN} as subsets of
$ W$). 
This is true, for $ c $ sufficiently close to $ c_* $ 
by  the stability property 
under $ C^1 $-perturbations stated in 
Proposition \ref{GromollMeyer}-({\bf III}). 
If $v \in W_-  $
then $ \Phi(c_* ,v) < 0 $ and, for $ c $ sufficiently close to $ c_* $, 
also $ \Phi(c,v) < 0 $  and thus 
$X(c, v) \equiv 0 $ (because 
$ g(v) = 0 $ on $N^c$).
Therefore $ \eta^t (v) = v $ for any $ t  $ and 
$ \eta^t(\cdot)_{|W_-} = I $.  
On the other hand, the set $ W \setminus W_-  $ 
is invariant for the positive flow $\eta^t(v)$ generated by $X(c,v)$ and consequently any curve $\gamma(\cdot)$ with values in $W$ is
deformed by  $ \eta^t(\cdot ) $ into a curve with values in $W$, proving the invariance of 
the min-max class $\Gamma$ in \eqref{casult}
 under the deformations $ \eta^t(\cdot ) $. 

 \subsection{A stable  Conley isolating block}
 
 Let $ f : B_r \subset V \to \R $ be a function of class $ C^2 $ defined in 
the ball $ B_r $ of radius $ r > 0 $ 
centered at $0 $ in a finite dimensional Hilbert space $ V $
with an isolated critical point at $ v = 0 $ at the level  $ f(0) = 0 $. 
We follow the construction in \cite{ChG}, \cite{GrM}.  
Let $  \varepsilon > 0 $ and $ \delta \in (0,r) $ such that 
$0 $ is the unique critical value in $ [-\varepsilon, \varepsilon] $
and $ v = 0 $ the  unique critical  
point of $ f $ in $ \overline{B_\delta} $. 
Consider the function
\begin{equation}\label{defg}
  g(v) := \lambda \| v \|^2 +  f(v)  
  \end{equation}
where 
\begin{equation}\label{lambr}
    0 < \lambda < \frac{\beta}{2 \delta } \, , \qquad 
\beta :=
\min_{\tfrac{\delta}{2} \leq 
\| v \| \leq \delta}  \| \nabla f (v) \| > 0 \, .  
\end{equation}
We denote $ g_\mu := \{ v \in B_r \ : g(v) \leq \mu \} $.
Given $ \gamma \in (0, \varepsilon)$, $ \mu > 0 $ we  define the sets
\begin{align}
& W := f^{-1} [-\gamma,\gamma] \cap g_\mu = \big\{ v \in B_r \ : \ |f(v)| \leq \gamma, \ 
g(v)\leq \mu \big\} , \label{Wset} \\
& W_- := f^{-1} (-\gamma) \cap W
= \big\{ v \in B_r \ : \ f(v) = - \gamma, \ 
g(v)\leq \mu \big\}  \label{W-} 
\, .  
\end{align}

\begin{lemma}\label{lem:stine}
If  $ \gamma,  \mu $ satisfy 
\begin{equation}\label{ineq}
 0 < \gamma < \min \Big\{ \varepsilon, \frac{3\delta^2 \lambda}{8} \Big\} \, , \quad 
\frac{\delta^2 \lambda}{4} + \gamma < \mu < \delta^2 \lambda - \gamma \, ,    
\end{equation}
then
\begin{enumerate}
    \item[(i)] 
    $ \overline{B_{\delta/2}} \cap f^{-1} [-\gamma,\gamma] 
\subset W \subset B_\delta \cap f^{-1} [-\varepsilon, \varepsilon ]$;
\item[(ii)] 
$ f^{-1} [-\gamma,\gamma] \cap g^{-1}(\mu) \subset  
B_{\delta} \setminus \overline{B_{\delta/2}} $; 
\item[(iii)] 
$ \langle \nabla g(v), \nabla f(v)\rangle >\beta(\beta-2\delta\lambda)>0 $ for any $ 
v \in \overline{B_{\delta}} \setminus 
B_{\delta/2}  $.
\end{enumerate}
\end{lemma}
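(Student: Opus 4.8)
The three claims are all elementary consequences of the two defining inequalities $|f(v)|\le\gamma$ and $g(v)\le\mu$ (or $g(v)=\mu$), combined with the identity $\lambda\|v\|^2=g(v)-f(v)$ coming from \eqref{defg} and the choice $0<\lambda<\beta/(2\delta)$ in \eqref{lambr}. The plan is first to record that the admissible range for $\mu$ in \eqref{ineq} is nonempty — this is exactly the content of $\gamma<3\delta^2\lambda/8$, since $\tfrac{\delta^2\lambda}{4}+\gamma<\delta^2\lambda-\gamma$ iff $2\gamma<\tfrac34\delta^2\lambda$ — and then to prove the three items in the order (iii), (i), (ii), all by direct estimates.

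For (iii), I would write $\nabla g(v)=2\lambda v+\nabla f(v)$, so that $\langle\nabla g(v),\nabla f(v)\rangle=\|\nabla f(v)\|^2+2\lambda\langle v,\nabla f(v)\rangle$. On the closed annulus $\delta/2\le\|v\|\le\delta$ one has $\|v\|\le\delta$ and, by Cauchy--Schwarz, $\langle v,\nabla f(v)\rangle\ge-\|v\|\,\|\nabla f(v)\|\ge-\delta\|\nabla f(v)\|$; writing $t:=\|\nabla f(v)\|\ge\beta$ (by the definition of $\beta$ in \eqref{lambr}) gives $\langle\nabla g(v),\nabla f(v)\rangle\ge t^2-2\delta\lambda\,t=t(t-2\delta\lambda)$. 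Since $\beta>2\delta\lambda$ by \eqref{lambr}, the map $t\mapsto t(t-2\delta\lambda)$ is strictly increasing on $[\beta,+\infty)$, whence $\langle\nabla g(v),\nabla f(v)\rangle\ge\beta(\beta-2\delta\lambda)>0$, the positive lower bound that will later make $\nabla g$ and $\nabla f$ ``transverse enough'' on the boundary annulus. (The strict first inequality is the quantitatively irrelevant part; what matters downstream is the strict positivity of the bound $\beta(\beta-2\delta\lambda)$.)

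For (i): if $\|v\|\le\delta/2$ and $|f(v)|\le\gamma$ then $g(v)=\lambda\|v\|^2+f(v)\le\tfrac{\delta^2\lambda}{4}+\gamma<\mu$ by \eqref{ineq}, so $v\in W$ (and $v\in B_r$ since $\delta<r$), giving the first inclusion. Conversely, if $v\in W$ then $|f(v)|\le\gamma<\varepsilon$, and $\lambda\|v\|^2=g(v)-f(v)\le\mu+\gamma<\delta^2\lambda$ again by \eqref{ineq}, so $\|v\|<\delta$; this gives $W\subset B_\delta\cap f^{-1}[-\varepsilon,\varepsilon]$. For (ii): if $|f(v)|\le\gamma$ and $g(v)=\mu$, the same computation yields $\lambda\|v\|^2=\mu-f(v)\le\mu+\gamma<\delta^2\lambda$, so $v\in B_\delta$; and if one had $\|v\|\le\delta/2$ then $\mu=g(v)\le\tfrac{\delta^2\lambda}{4}+\gamma<\mu$, a contradiction, hence $\|v\|>\delta/2$ and $v\in B_\delta\setminus\overline{B_{\delta/2}}$.

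There is no real obstacle in the argument: the whole lemma is a matter of choosing the constants $\gamma,\mu$ in the windows prescribed by \eqref{ineq} and \eqref{lambr} so that the chain of inequalities closes up, and the only point requiring a little attention is exactly how the $\mu$-window is forced to be nonempty by the upper bound on $\gamma$. Everything else is plugging $\lambda\|v\|^2=g(v)-f(v)$ into the defining conditions of $W$ and $W_-$ from \eqref{Wset}--\eqref{W-}.
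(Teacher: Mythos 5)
Your proposal is correct and takes essentially the same route as the paper, which merely states that (i)--(ii) "are directly verified by the definitions \ldots noting that $\|v\|^2 = (\mu - f(v))/\lambda$ on $g^{-1}(\mu)$" and that (iii) "follows by \eqref{defg} and \eqref{lambr}"; you have simply written out the one-line computations the authors left implicit, including the useful observation that the constraint $\gamma < 3\delta^2\lambda/8$ is precisely what makes the $\mu$-window in \eqref{ineq} nonempty. Your remark on (iii) is also apt: the direct Cauchy--Schwarz estimate yields $\langle \nabla g(v), \nabla f(v)\rangle \ge \beta(\beta - 2\delta\lambda)$ with possible equality when $\|v\| = \delta$, $\|\nabla f(v)\| = \beta$, and $\nabla f(v)$ is antiparallel to $v$, so the paper's strict "$>$" is a slight overstatement, but since the only downstream use (in Proposition \ref{GromollMeyer}, item {\bf (III)}) is the strict positivity of the lower bound $\beta(\beta-2\delta\lambda)$, this has no effect on the argument.
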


\begin{proof}
Properties ($i$)-($ii$) are directly verified by the definitions \eqref{Wset}, \eqref{W-}
using \eqref{ineq} and noting that $ \| v \|^2 = \frac{\mu-f(v)}{\lambda}$ for any $v\in g^{-1}(\mu)$. 
Property ($iii$) follows by \eqref{defg} and \eqref{lambr}.
\end{proof}

The set $W $ is a compact neighborhood of $ v = 0 $.  
The set $ W_-$  is a  closed subset of $ W $.

\begin{proposition}\label{GromollMeyer}
{\bf (Stable  Conley isolating block)}. 
The set $ W $ in \eqref{Wset}  has the following properties:
\begin{itemize}  
\item {\bf (I)} for any $ v \in W $ then either 
\begin{itemize} 
\item[(i)] $ f(v) = - \gamma $, {\rm or }
\item[(ii)] 
$ \eta^t (v) \in W $, for any $ t > 0 $ near $ 0 $.
\end{itemize}
\item  {\bf (II)} 
The set $ W_-$ in \eqref{W-}
is  the ``exit" set
of $ W $ with respect to the negative gradient flow  
$ \eta^t (v) $ generated by  $ - \nabla f(v ) $, namely
\begin{equation}
W_- = \big\{ v \in W \ : \ \eta^t (v) \notin W \ \forall t > 0 \ \text{near} \ 0 \} \, . 
\label{W-alto}
\end{equation}
in the negative level set $ f^{-1}(- \gamma )$.
If $ f $ assumes negative values arbitrarily close to $ v = 0 $ then $ W_- \neq \emptyset $
and there is a point $ v \in W_- $ such that $ \eta^t (v) \to 0 $ as $ t \to - \infty $. 
\item {\bf (III)}
{\bf (Stability under $ C^1 $-perturbations)}
If $ F : B_r \to \R $ 
is sufficiently close in $ C^1 $ norm to $ f $
on $ \pa W \setminus W_- $ then, for any $ v \in W \setminus W_- $ 
the negative gradient flow  $ \eta^t_F $  
generated by $ - \nabla F(v)$ satisfies 
$ \eta^t_F (v) \in W $ for any $ t > 0 $ close to $0 $.
\end{itemize}
\end{proposition}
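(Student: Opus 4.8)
The plan is to use that $W$ is cut out by the two functions $f$ and $g=\lambda\|\cdot\|^2+f$, so that $\partial W$ decomposes into three faces --- the ``top'' $\{f=\gamma\}$, the ``bottom'' $W_-=\{f=-\gamma\}$, and the ``lateral'' face $\{g=\mu\}$ --- and to track the behaviour of $f$ and $g$ along the negative gradient flow $\eta^t$ of $f$ on each of them, via $\tfrac{d}{dt}f(\eta^t(v))=-\|\nabla f(\eta^t(v))\|^2\le 0$ and $\tfrac{d}{dt}g(\eta^t(v))=-\langle\nabla g,\nabla f\rangle(\eta^t(v))$. First I would observe: on the top face $f$ strictly decreases, since $\gamma\in(0,\varepsilon)$ is not a critical value and, by Lemma \ref{lem:stine}(i), the face sits in $B_\delta$ where $0$ is the only critical point (at level $0\neq\gamma$); on the lateral face, and on its corner with the top face, Lemma \ref{lem:stine}(ii)--(iii) place the relevant points in $\overline{B_\delta}\setminus B_{\delta/2}$, where $\langle\nabla g,\nabla f\rangle\ge\beta(\beta-2\delta\lambda)>0$, so $g$ strictly decreases; on the bottom face $W_-$ one has $f(v)=-\gamma\neq 0=f(0)$, hence $v\neq 0$, hence $\nabla f(v)\neq 0$ and $f$ strictly decreases, so the orbit leaves $W$. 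This yields item \textbf{(I)} at once, together with the characterization \eqref{W-alto} of $W_-$ as the exit set claimed in \textbf{(II)}.

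For the qualitative part of \textbf{(II)}, I would argue as follows. Since $f(0)=0$ and $f$ takes negative values arbitrarily close to $0$, pick $p_n\to 0$ with $f(p_n)<0$; by Lemma \ref{lem:stine}(i) they lie in $W$. Along the forward orbit of $p_n$ the value $f$ is strictly decreasing (it never meets $0$, the only critical point, at which $f=0$) and bounded below by $-\gamma$ as long as the orbit stays in $W$, so an $\omega$-limit argument (the $\omega$-limit set would be a nonempty compact invariant subset of $\{\nabla f=0\}\cap W=\{0\}$ on which $f\equiv 0$, contradicting $f<0$ along the orbit) shows the orbit leaves $W$ at a finite time $T_n$, necessarily through $W_-$ by the face analysis above; put $q_n:=\eta^{T_n}(p_n)\in W_-$. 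In particular $W_-\neq\emptyset$, and the concatenation of the segment $[0,p_n]$ with $t\mapsto\eta^t(p_n)$, $t\in[0,T_n]$, is a continuous path in $W$ from $0$ to $W_-$ --- which is what is invoked in Section \ref{sec:speed}. To produce the stated heteroclinic, extract $q_n\to q_\infty\in W_-$ (compactness of $W_-$) and show that the full backward orbit of $q_\infty$ never leaves $W$: otherwise it would leave through an entrance face of $\partial W\setminus W_-$ at a finite negative time, and continuous dependence on the initial datum would force the times $T_n$ to stay bounded, so passing to the limit would put $0$ on the orbit of $q_\infty$, forcing $q_\infty=0$ against $f(q_\infty)=-\gamma$. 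Hence $q_\infty$ belongs to the unstable set $W^u:=\{v\in W:\eta^{-t}(v)\in W\ \forall t\ge 0\}$; for any $v\in W^u$ the value $f(\eta^{-t}(v))$ is non-decreasing in $t$ and bounded above, so its $\alpha$-limit set is a compact invariant subset of $\{\nabla f=0\}\cap W=\{0\}$, whence $\eta^t(q_\infty)\to 0$ as $t\to-\infty$.

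For the stability statement \textbf{(III)}, the key remark is that on the faces making up $\partial W\setminus W_-$ the field $-\nabla f$ is transverse and points strictly into $W$, with uniform bounds: $\|\nabla f\|\ge c_1>0$ on the top face (no critical points there), $\langle\nabla g,\nabla f\rangle\ge\beta(\beta-2\delta\lambda)>0$ on the lateral face and the corners by Lemma \ref{lem:stine}(iii), while $\|\nabla f\|$ and $\|\nabla g\|$ are bounded on the compact set $W$. Hence, for $F$ with $\|\nabla F-\nabla f\|_{C^0(\partial W\setminus W_-)}$ small enough, $\tfrac{d}{dt}f(\eta_F^t(v))|_{t=0}<0$ on the top face and $\tfrac{d}{dt}g(\eta_F^t(v))|_{t=0}<0$ on the lateral face (and both at a corner), so the $-\nabla F$-flow cannot exit $W$ immediately from any $v\in(W\setminus W_-)\cap\partial W$; for $v$ in the interior of $W$ the claim is obvious.

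I expect the genuine obstacle to be the qualitative half of \textbf{(II)} --- the existence of an orbit emanating from the (possibly degenerate) critical point $0$ and reaching $W_-$, i.e.\ the nontriviality of the unstable set $W^u$ --- since it is the only place where soft dynamical arguments (compactness of $W$ and $W_-$, continuous dependence, $\omega$- and $\alpha$-limit sets, and the fact that $\{0\}$ is the maximal invariant subset of $W$) must be combined with care; items \textbf{(I)} and \textbf{(III)} are essentially bookkeeping once Lemma \ref{lem:stine} is available.
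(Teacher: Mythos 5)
Your proof is correct and follows essentially the same face-by-face transversality strategy as the paper: decompose $\partial W$ into the top face $\{f=\gamma\}$, the lateral face $\{g=\mu\}$, and the exit face $W_-$, and use Lemma \ref{lem:stine} to track $f$ and $g$ along the negative gradient flow on each, which gives \textbf{(I)}, the exit-set characterization in \textbf{(II)}, and the stability statement \textbf{(III)}. Your detailed construction of the heteroclinic in \textbf{(II)} --- forward exit from points $p_n\to 0$ with $f(p_n)<0$, compactness of $W_-$ to extract $q_\infty$, the dichotomy on boundedness of the exit times $T_n$, and the $\alpha$-limit argument --- is a careful and correct expansion of what the paper compresses into a single sentence appealing to $\omega$- and $\alpha$-limit sets.
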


\begin{proof}
Let us prove item {\bf (I)}. Let $ v \in W $. 
Either $ f(v) = - \gamma $ (case ($i$)) or $ f(v) \in (- \gamma,\gamma] $ case ($ii$). In this case
$ \gamma \geq f(v) \geq f(\eta^t(v)) > - \gamma  $ for any $ t > 0 $ near $0 $. 
Furthermore, if $ g(v) < \mu $ then by continuity $ g(\eta^t(v)) < \mu $ for any $ t > 0 $ near $0 $.
If $ g(v) = \mu $ then by Lemma \ref{lem:stine}-($ii$) we deduce that $ v \in 
\overline{B_{\delta}} \setminus B_{\delta/2} $ and thus  Lemma \ref{lem:stine}-($iii$) implies 
that $ \frac{d}{dt} g (\eta^t (v) )_{|t=0} = - \la\nabla g(v), \nabla f(v)\ra < 0 $. In both cases
 $ g (\eta^t (v)) < \mu $  for any $ t > 0 $ near $0 $, thus $ \eta^t (v) \in W $
 for any $ t > 0 $ near $0 $. 

The equality in \eqref{W-alto}
follows by Item {\bf (I)} 
 and since for any $v \in W_- $ we have 
$ \nabla f(v) \neq 0 $ we deduce \eqref{W-alto}. 
The last statement of Item {\bf (II)} follows because the $\omega $ and $ \alpha $ 
limit sets of the gradient flow is not empty and 
contained in the set of critical points of $ f $,  
jointly with item  {\bf (I)}.  

We now decompose $W $ in \eqref{Wset} in disjoint subsets as
$$
W =    \underbrace{\{ v \in B_r \ : \ |f(v)|< \gamma \, , \ 
|g(v)| < \mu \}}_{=:A} \cup  W_- \cup {\cal W}  
$$
where $W_- $ is defined in \eqref{W-} and 
\begin{equation}\label{calW}
{\cal W} := \underbrace{
\Big\{ v \in B_r   \ : \  - \gamma < 
f(v) \leq \gamma \, , \ g(v) = \mu  \Big\}}_{=: {\cal W}_1} 
\bigcup
\underbrace{\Big\{ v \in B_r   \ : \  f(v) = \gamma \, , \ g(v) < \mu  \Big\}}_{=: {\cal W}_2}  
\, . 
\end{equation} 
Note that the set $ {\cal W}_1 $  in \eqref{calW} is included in
$ 
B_{\delta} \setminus \overline{B_{\delta/2} }
$.
It results that
\begin{equation}\label{linsieme}
  A = \overset{\circ}{W}   \qquad \text{and} \qquad 
W_- \cup {\cal W} = \pa W \, .  
\end{equation}
Indeed $ A \subset \overset{\circ}{W} $ trivially by continuity and 
we claim that any $ v \in  W_- \cup {\cal W} $ belongs to the boundary of $ W $.
If $ v \in W_- $ the flow  $ \eta^t (v) \notin W $ for any $ t > 0 $ arbitrarily small.
If $ v \in {\cal W}_1 $   Lemma \ref{lem:stine}-($ii$)-($iii$) implies 
that $ \frac{d}{dt} g (\eta^t (v) )_{|t=0} = - \la\nabla g(v), \nabla f(v)\ra < 0 $, so
$ g (\eta^t (v)) > \mu $ for any $ t < 0 $ arbitrarily small.
If $ v \in {\cal W}_2 $ then $ \frac{d}{dt}  f (\eta^t (v) )_{|t=0} 
= - \| \nabla f (v) \|^2 < 0 $ so
$ f (\eta^t (v)) > \gamma $ for any $ t < 0 $ arbitrarily small. This implies \eqref{linsieme}. 

Let us finally prove item {\bf (III)}. Using that  $ f $ has no critical points 
on the part of the boundary ${\cal W } $ and 
that $ \langle \nabla g(v), \nabla f(v)\rangle >\beta(\beta-2\delta\lambda)>0 $ for any $ 
v \in {\cal W }_1 \subset \overline{B_{\delta}} \setminus 
B_{\delta/2}  $ by Lemma \ref{lem:stine}, 
we deduce that, for $ F $ sufficiently close to $ f $, 
\begin{equation}\label{strong}
\langle \nabla f(v), \nabla F(v) \rangle > 0 \, ,  \quad  \forall v \in {\cal W } \, , \quad 
\langle \nabla g(v), \nabla F(v) \rangle > 0 \, , \quad \forall v \in {\cal W }_1\, . 
\end{equation}
By \eqref{strong} for any $ v \in {\cal W}_1 \cup {\cal W}_2 $ 
we have  that $ \eta_F^t (v) \in \overset{\circ}{W} $ for any $ t > 0 $ close to $0$. 
Thus the flow $ \eta_F^t (v) $ can exit $ W $ only through $ W_- $ and the proposition 
is proved.
\end{proof}

\begin{figure}[h]\label{fig:dis1}
    \begin{center}
    \includegraphics[ width =0.25\textwidth ]{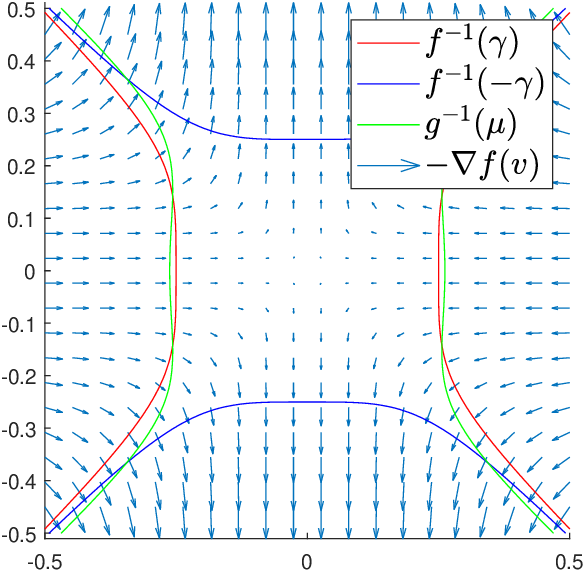}
    \end{center}
    \caption{A stable Conley isolating block for the function $f(x,y)=x^4-y^4$.
    For any function $ F $ which is $ C^1$-close to $ f $, 
    the negative gradient vector field $ - \nabla F $   points inward $ W $ on
     $ \pa W \setminus W_- $ and points outside $W$ on $ W_- $.}
    \label{figure:Conley} 
\end{figure}

  \begin{footnotesize}

\noindent 
\footnotesize{This work  is supported 
by PRIN 2020 (2020XB3EFL001)‚ Hamiltonian and dispersive PDEs‚ and 
PRIN 2022 (2022HSSYPN)‚ TESEO Turbulent Effects vs Stability in Equations from Oceanography. A. Maspero is supported by the European Union ERC CONSOLIDATOR GRANT 2023 GUnDHam,
Project Number: 101124921}
\normalsize

\medskip

\noindent 
{\it Tommaso Barbieri}, SISSA, Via Bonomea 265, 34136, Trieste, Italy, \texttt{tbarbier@sissa.it}.
 \\[1mm]
{\it Massimiliano Berti}, SISSA, Via Bonomea 265, 34136, Trieste, Italy, \texttt{berti@sissa.it},  
\\[1mm]
{   \it Alberto Maspero}, SISSA, Via Bonomea 265, 34136, Trieste, Italy,  \texttt{amaspero@sissa.it}, 
 \\[1mm] 
{   \it Marco Mazzucchelli}, ENS de Lyon, 46 all\'ee d'Italie, 69364 Lyon, France, 
 \texttt{marco.mazzucchelli@ens-lyon.fr}.

  \end{footnotesize}
 

\begin{thebibliography}{999}                                                                                            
\bibitem{AR} 
    Ambrosetti A., Rabinowitz P., {\it Dual Variational Methods in Critical Point Theory and Applications}. Journ. Func. Anal, 14, 349-381, 1973.

\bibitem{AFT}
    Amick C.,  Fraenkel L.,  Toland J., {\it On the Stokes conjecture for the wave of extreme form}. Acta Math. 148, 193–214, 1982.


\bibitem{Berti}
    Berti M.,   {\it Nonlinear Oscillations of Hamiltonian PDEs}. {Progress in Nonlinear Differential Equations}. {\sc book},  Birkh\"auser, 1-180 pages, Boston, ISBN-13: 978-0-8176-4680-6, 2008. 


\bibitem{BFM}
	Berti M., Franzoi L.,  Maspero A., \emph{Traveling quasi-periodic water waves with constant vorticity.} Arch. Rational Mech. Anal., 240, 99-202, 2021. 

  \bibitem{BFM2}
Berti M., Franzoi L., Maspero A.,
 {\it  Pure gravity traveling quasi-periodic water waves with constant vorticity}.
 Comm. Pure Applied Math., 77(2): 990--1064, 2024.
	
\bibitem{BMV2}  Berti M.,   Maspero A.,  Ventura P., 
\emph{On the analyticity of the Dirichlet-Neumann operator and Stokes waves}. 
Rend. Lincei Mat. Appl., 
33, 611--650, 2022.

\bibitem{BuTo} Buffoni B., Toland J., {\it Analytic Theory of Global Bifurcation: An Introduction}, Princeton Series in Applied Mathematics, 2003.

\bibitem{ChG} Chang C.,  Ghoussoub N., 
{\it The Conley Index and the critical groups via an extension of Gromoll-Meyer
Theory}. Topological Methods in Nonlinear Analysis, 
Volume 7,  77–93, 1996. 

\bibitem{const_book}
Constantin A., 
{\it Nonlinear Water Waves with Applications to Wave-Current Interaction and Tsunamis},
CBMS-NSF Regional Conf, Series in Applied Math., 81,  SIAM, 2011.


	\bibitem{CIP}
	Constantin A., Ivanov R.I., Prodanov E.M., 
	\emph{Nearly-Hamiltonian Structure for Water Waves with Constant Vorticity.}
	J. Math. Fluid Mech., 
	10, 224–237, 2008.

\bibitem{CSt}
Constantin A., Strauss W., 
{\it Exact steady periodic water waves with vorticity}, 
Comm. Pure Appl. Math. 57, no. 4, 481-527, 2004. 

\bibitem{CSV}
Constantin A., Strauss W., Varvaruca E., 
{\it Global bifurcation of steady gravity water waves with critical layers}.
Acta Math.   217,  no. 2, 195–262, 2016.

\bibitem{CN}
	Craig W., Nicholls  D., 
	\emph{Traveling two and three dimensional capillary gravity water waves}.  SIAM J. Math. Anal., 
	32, 323-359, 2000.

\bibitem{CN2}
Craig W., Nicholls D.,
\newblock {\it Traveling gravity water waves in two and three dimensions.}
\newblock { Eur. J. Mech. B Fluids}, 21(6):615-641, 2002.


 
	\bibitem{CS}
	Craig W., Sulem  C., 
	\emph{Numerical simulation of gravity water waves}. J. Comput. Phys., 
	108, 73-83, 1993.

	\bibitem{dubreil}
		Dubreil-Jacotin M.-L.,
		\newblock{\em Sur la d\'etermination rigoureuse des ondes permanentes
			p\'eriodiques d'ampleur finie}.
		\newblock{ J. Math. Pures Appl. 13}, 217-291, 1934.

\bibitem{FR} Fadell E., Rabinowitz P., {\it Generalized 
cohomological index theories for the group actions 
with an application to bifurcation questions  for Hamiltonian
systems}. Inv. Math. 45, 139-174, 1978.

	\bibitem{FG}
		Feola R.,  Giuliani F.,
		\emph{Quasi-periodic traveling waves on an infinitely deep fluid under gravity.}
		Memoires AMS, Volume 295,  164 pp., 2024.


\bibitem{goyon}
		Goyon R., 
		\newblock{\em Contribution \`a la th\'eorie des houles}.
		\newblock{ Ann. Sci. Univ. Toulouse 22}, 1-55, 1958.

\bibitem{GrM}
Gromoll D., Meyer W., 
{\it On differentiable functions with isolated critical points}. Topology 8, 361–369, 1969.


\bibitem{GNPW}
 Groves M., 
Nilsson D., 
Pasquali S.,
Wahlén E.,
{\it Analytical study of a generalised Dirichlet–Neumann operator and application to three-dimensional water waves on Beltrami flows}.
J. Diff. Eq., 413(25): 129--189, 2024.

\bibitem{HHSTWWW}
Haziot S.,  Hur V.M., Strauss W., Toland J., 
Wahlen E.,  Walsh S.,  Wheeler M.,
{\it Traveling water waves -the ebb and flow of two centuries}. 
Quarterly of Applied Mathematics, 80, 2, 317-401, 2022. 

\bibitem{IP-Mem-2009}
Iooss G., Plotnikov P.,
\newblock {\it Small divisor problem in the theory of three-dimensional water
  gravity waves}. 
\newblock {Mem. Amer. Math. Soc.}, 200(940):viii+128, 2009.

\bibitem{IP2}
Iooss G., Plotnikov P.,
\newblock {\it Asymmetrical tridimensional traveling gravity waves}. 
\newblock { Arch. Rat. Mech. Anal.}, 200(3):789--880, 2011.

\bibitem{JT}
    Jones M., Toland J., {\it The bifurcation and secondary bifurcation of capillary gravity waves.} Proc. Royal Soc. London Ser. A, 399, 391–417, 1985. 

\bibitem{KN}
    Keady G., Norbury J., {\it On the existence theory for irrotational water waves.} Math. Proc. Cambridge Philos. Soc. 83, no. 1, 137-157, 1978.
    Arch. Rat. Mech. Anal. 247(98), 2023. 

\bibitem{KK}
    Kozlov, V., Lokharu, E. {\it Global Bifurcation and Highest Waves on Water of Finite Depth.} Arch. Ration. Mech. Anal. 247, 5, 98, 2023.

\bibitem{LC}
    Levi-Civita  T., 
    \newblock {\it D\'etermination rigoureuse des ondes permanentes d' ampleur finie.} 
    \newblock { Math. Ann.}, 93 , pp. 264-314, 1925.

\bibitem{LSW}
    Lokharu E.,  Seth D.,  Wahlén E., {\it An Existence Theory for Small-Amplitude Doubly Periodic Water Waves with Vorticity}. Arch Rational Mech Anal 238, 607–637, 2020.
        
\bibitem{MS}
    Maelhlen O., Svensson Seth D. {\it Asymmetric travelling wave solutions of the capillary-gravity Whitham Equation}. SIAM J. Math. Anal. 56, no. 6, 8096-8124, 2024.


\bibitem{M}
	Martin C.I., \emph{Local bifurcation and regularity for steady periodic capillary-gravity water waves with constant vorticity}. Nonlinear Anal.: Real World Applications, \textbf{14}, 131-149, 2013.

\bibitem{ML}
    McLeod J. B., {\it The Stokes and Krasovskii conjectures for the wave of greatest height.} Stud. Appl. Math. 98, no. 4, 311-333, 1997.

\bibitem{Mo}
    Moser J., {\it Periodic orbits near an Equilibrium and a Theorem by Alan Weinstein}.  Comm. Pure  Appl. Math., XXIX, 1976.

\bibitem{Nek}
		Nekrasov A. I.,  {\it On steady waves}.  Izv. Ivanovo-Voznesenk. Politekhn. 3, 1921.

\bibitem{NR}
    Nicholls D., Reitich F.,  \emph{On analyticity of travelling water waves}. Proc. R. Soc. Lond. Ser. A Math. Phys. Tech. Sci. Inf. Sci. 461 (2057), 1283-1309, 2005. 	

\bibitem{P1982} 
    Plotnikov P. I., {\it A proof of the Stokes conjecture in the theory of surface waves}. Dinamika Sploshn Sred, 57, 41–76, 1982.

\bibitem{R3} 
    Rabinowitz P., {\it A bifurcation theorem for potential operators}. J. Func. Anal., 25,  412-424, 1977.
 
	\bibitem{R}
	Rabinowitz P.H., \emph{Minimax Methods in Critical Point Theory with Applications to Differential Equations}. AMS-CBMS 65, 1986.

\bibitem{RS}
     Reeder J., Shinbrot M., \newblock{\it Three-dimensional, nonlinear wave interaction in water of constant depth.} \newblock{ Nonlinear Anal.,} T.M.A., 5(3), 303--323, 1981.


	\bibitem{stokes}
		Stokes G., {\it On the theory of oscillatory waves}. Trans. Cambridge Phil. Soc. 8, 441-455, 1847.
  
		\bibitem{Struik}
		Struik D.,  {\it D\'etermination rigoureuse des ondes irrotationelles p\'eriodiques dans un canal \'a profondeur finie}.  Math. Ann. 95, 595-634, 1926.

             \bibitem{Seth}
 Svensson Seth D.,
{\it On small-amplitude asymmetric water waves}. 
Water Waves, doi.org/10.1007/s42286-024-00104-3, 2024.  


\bibitem{To}
    Toland J. F., {\it On the existence of a wave of greatest height and Stokes conjecture.} Proc. Roy. Soc. London Ser. A 363, 1715, 469-485, 1978.

\bibitem{W}
	Wahl\'{e}n E., 
	\emph{Steady Periodic Capillary‐Gravity Waves with Vorticity}. SIAM J. Math. Anal., 
	38, 921-943, 2006.

\bibitem{Wh0}
    Wahl\'en E., {\it Steady periodic capillary-gravity waves with vorticity}. SIAM J. Math. Anal. 38, 921-943, 2006.
 

\bibitem{Wh}
    Wahl\'en E., {\it A Hamiltonian formulation of water waves with constant vorticity}. Letters in Math. Physics, 79, 303-315, 2007. 


\bibitem{WW24}
	Wahl\'en E,  Weber J., {\it Large-amplitude steady gravity water waves with general vorticity and critical layers.} Duke Math. J. 173 (11), 2197-2258, 2024. 

\bibitem{We1}
    Weinstein A., {\it Lagrangian submanifolds and Hamiltonian systems}. Ann. Math., 98, 377-410, 1973.

\bibitem{We2}
    Weinstein A., {\it Normal modes for non-linear Hamiltonian systems}. Inv. Math., 20, 47-57, 1973. 

\bibitem{Z}
	Zakharov V.E., \emph{Stability of periodic waves of finite amplitude on the surface of a deep fluid}. J. Appl. Mech. Tech. Phys., 9, 73-83, 1968.

\bibitem{Zei}
    Zeidler E., {\it Existenzbeweis f\"ur cnoidal waves unter Ber\"ucksichtigung der Oberfl\"achen spannung.} Arch. Rational Mech. Anal, 41, 81-107, 1971.


\end{thebibliography}
\end{document}